\date{}
\newtheorem{theorem}{Theorem}[section]
\newtheorem{lemma}[theorem]{Lemma}
\newtheorem{proposition}[theorem]{Proposition}
\newtheorem{remark}[theorem]{Remark}
\newenvironment{proof}[1][Proof]{\begin{trivlist}
\item[\hskip \labelsep {\bfseries #1}]}{\end{trivlist}}
\newenvironment{definition}[1][Definition]{\begin{trivlist}
\item[\hskip \labelsep {\bfseries #1}]}{\end{trivlist}}
\newcommand{\norme}[1]{\left\Vert #1\right\Vert}
\newcommand{\modd}[1]{\vert #1\vert^2}
\newcommand{\nablag}{\nabla^{\gamma}}
\newcommand{\N}{\mathbb{N}}
\newcommand{\psia}{\psi_{(\alpha)}}
\newcommand{\nablamug}{\nabla^{\mu,\gamma}}
\newcommand{\Vu}{\overline{V}}
\newcommand{\Vd}{\underline{V}}
\newcommand{\ds}{\displaystyle}
\newcommand{\D}{\vert D^\gamma\vert}
\newcommand{\R}{\mathbb{R}}
\newcommand{\Z}{\mathbb{Z}}
\newcommand{\B}{\mathfrak{P}}
\newcommand{\rt}{\underline{\mathfrak{a} }}
\newcommand{\dt}{\partial_t}
\def \epsilon {\varepsilon}
\newcommand{\zetaa}{\zeta_{(\alpha)}}
\newcommand{\supp}{\textrm{supp}}
\newcommand{\E}{\mathcal{E}}
\renewcommand{\S}{\mathcal{S}}
\newcommand{\Lambdad}{\Lambda_\delta}
\newcommand{\G}{\mathcal{G}}
\renewcommand{\S}{\mathcal{S}}
\begin{document}

\title{A dispersive estimate for the linearized Water-Waves equations in finite depth}
\author{Mésognon-Gireau Benoît\footnote{UMR 8553 CNRS, Laboratoire de Mathématiques et Applications de l'Ecole Normale Supérieure, 75005 Paris, France. Email: benoit.mesognon-gireau@ens.fr}}
\date{}
\maketitle

\begin{abstract}
We prove a dispersive estimate for the solutions of the linearized Water-Waves equations in dimension $1$ in presence of a flat bottom. Adapting the proof from \cite{aynur} in the case of infinite depth, we prove a decay with respect to time $t$ of order $\vert t\vert^{-1/3}$ for solutions with initial data $\varphi$ such that  $\vert\varphi\vert_{H^1}$, $\vert x\partial_x\varphi\vert_{H^1}$ are bounded. We also give variants to this result with different decays for a more convenient use of the dispersive estimate. We then give an existence result for the full Water-Waves equations in weighted spaces for practical uses of the proven dispersive estimates.
\end{abstract}

\section{Introduction}

We recall here classical formulations of the Water-Waves problem. We then shortly introduce the meaningful dimensionless parameters of this problem, and then present the main results of this paper.
\subsection{Formulations of the Water-Waves problem}
The Water-Waves problem puts the motion of a fluid with a free surface into equations. We recall here two equivalent formulations of the Water Waves equations for an incompressible and irrotationnal fluid. 
\subsubsection{Free surface $d$-dimensional Euler equations} 
The motion, for an incompressible, inviscid and irrotationnal fluid occupying a domain $\Omega_t$ delimited below by a fixed bottom and above by a free surface is described by the following quantities:
\begin{itemize}[label=--,itemsep=0pt]
\item the velocity of the fluid $U=(V,w)$, where $V$ and $w$ are respectively the horizontal and vertical components;
 \item the free top surface profile $\zeta$;
 \item the pressure $P.$ 
\end{itemize}
All these functions depend on the time and space variables $t$ and $(X,z) \in\Omega_t$. There exists a function $b:\mathbb{R}^d\rightarrow \mathbb{R}$ such that the domain of the fluid at the time $t$ is given by  $$\Omega_t = \lbrace (X,z)\in\mathbb{R}^{d+1},-H_0+ b(X) < z <\zeta(t,X)\rbrace,$$ where $H_0$ is the typical depth of the water. The unknowns $(U,\zeta,P)$ are governed by the Euler equations: 
\begin{align}
\begin{cases}
\partial_t U +  U\cdot\nabla_{X,z}U = - \frac{1}{\rho}\nabla P -ge_z\text{ in } \Omega_t\\
\mbox{\rm div}(U) = 0 \text{ in } \Omega_t\\
\mbox{\rm curl}(U) = 0 \text{ in } \Omega_t .
\label{c2:euler}
\end{cases}\end{align}
\par We denote here $-ge_z$ the acceleration of gravity, where $e_z$ is the unit vector in the vertical direction, and $\rho$ the density of the fluid. Here, $\nabla_{X,z}$ denotes the $d+1$ dimensional gradient with respect to both variables $X$ and $z$. \par  \vspace{\baselineskip}
 These equations are completed by boundary conditions: 
\begin{align}
\begin{cases}
\partial_t \zeta +\underline{V}\cdot\nabla\zeta - \underline{w} = 0  \\
U\cdot n = 0 \text{ on } \lbrace z=-H_0+ b(X)\rbrace \\
P=P_{atm}  \text{ on } \lbrace z=\zeta(t,X)\rbrace, \label{c2:boundary_conditions}
\end{cases}
\end{align}
In these equations, $\underline{V}$ and $\underline{w}$ are the horizontal and vertical components of the velocity evaluated at the surface. The vector $n$ in the second equation stands for the normal upward vector at the bottom $(X,z=-H_0+b(X))$. We denote $P_{atm}$ the constant pressure of the atmosphere at the surface of the fluid. The first equation of \eqref{c2:boundary_conditions} states the assumption that the fluid particles do not cross the surface, while the second equation of \eqref{c2:boundary_conditions} states the assumption that they do not cross the bottom. The equations \eqref{c2:euler} with boundary conditions \eqref{c2:boundary_conditions} are commonly referred to as the free surface Euler equations.
\subsubsection{Craig-Sulem-Zakharov formulation} 
Since the fluid is by hypothesis irrotational, it derives from a scalar potential: $$U = \nabla_{X,z} \Phi.$$  Zakharov remarked in \cite{zakharov} that the free surface profile $\zeta$ and the potential at the surface $\psi = \Phi_{\vert z=\zeta}$ fully determine the motion of the fluid, and gave an Hamiltonian formulation of the problem. Later, Craig-Sulem, and Sulem (\cite{craigsulem1} and \cite{craigsulem2}) gave a formulation of the Water Waves equation involving the Dirichlet-Neumann operator. The following Hamiltonian system is equivalent (see \cite{david} and \cite{alazard} for more details) to the free surface Euler equations \eqref{c2:euler} and \eqref{c2:boundary_conditions}:
\begin{align}\begin{cases}
        \displaystyle{\partial_t \zeta -  \G\psi = 0} \\
        \displaystyle{\partial_t \psi + g\zeta + \frac{1}{2}\vert\nabla\psi\vert^2 - \frac{(\G\psi +\nabla\zeta\cdot\nabla\psi)^2}{2(1+\mid\nabla\zeta\mid^2)}=0, }\label{c2:ww_equation} 
\end{cases}\end{align}
where the unknowns are $\zeta$ (free top profile) and $\psi$ (velocity potential at the surface) with $t$ as time variable and  $X\in\mathbb{R}^d$ as space variable. The fixed bottom profile is $b$, and $\G$ stands for the Dirichlet-Neumann operator, that is 

\begin{equation*}
\G\psi = \G[\zeta, b]\psi = \sqrt{1+ \modd{\nabla\zeta}} \partial_n \Phi_{\vert z=\zeta},
\end{equation*} 
where $\Phi$ stands for the potential, and solves a Laplace equation with Neumann (at the bottom) and Dirichlet (at the surface) boundary conditions 
\begin{align*}\begin{cases}
\Delta_{X,z} \Phi = 0 \quad \text{in }  \lbrace (X,z)\in\R^d\times\R, -H_0+ b(X) < z < \zeta(X)\rbrace \\
\phi_{\vert z=\zeta} = \psi,\quad \partial_n \Phi_{\vert z=-H_0+ b} = 0 
\end{cases}\end{align*}
with the notation, for the normal derivative $$\partial_n \Phi_{\vert z=-H_0+b(X)} = \nabla_{X,z}\Phi(X,-H_0+b(X))\cdot n$$ where $n$ stands for the normal upward vector at the bottom $(X,-H_0+b(X))$. See also \cite{david} for more details.

%

\subsubsection{Dimensionless equations}
Since the properties of the solutions depend strongly on the characteristics of the fluid, it is more convenient to non-dimensionalize the equations by introducing some characteristic lengths of the wave motion:
\begin{enumerate}[itemsep=0pt,label=(\arabic*)]
\item The characteristic water depth $H_0$; 
\item The characteristic horizontal scale $L_x$ in the longitudinal direction; 
\item The characteristic horizontal scale $L_y$ in the transverse direction (when $d=2$);
\item The size of the free surface amplitude $a_{surf}$;
\item The size of bottom topography $a_{bott}$.
\end{enumerate}
Let us then introduce the dimensionless variables: $$x'=\frac{x}{L_x},\quad y'=\frac{y}{L_y},\quad \zeta'=\frac{\zeta}{a_{surf}},\quad z'=\frac{z}{H_0},\quad b'=\frac{b}{a_{bott}},$$ and the dimensionless variables: $$t'=\frac{t}{t_0},\quad \Phi'=\frac{\Phi}{\Phi_0},$$ where $$t_0 = \frac{L_x}{\sqrt{gH_0 }},\quad \Phi_0 = \frac{a_{surf}}{H_0}L_x \sqrt{gH_0}.$$ 

After re scaling, several dimensionless parameters appear in the equation. They are 
\begin{align*}
\frac{a_{surf}}{H_0} = \epsilon, \quad \frac{H_0^2}{L_x^2} = \mu,\quad \frac{a_{bott}}{H_0} = \beta,\quad \frac{L_x}{L_y} = \gamma,
\end{align*}
where  $\epsilon,\mu,\beta,\gamma$ are commonly referred to respectively as "nonlinearity", "shallowness", "topography" and "transversality" parameters.\par
\vspace{\baselineskip}

For instance, the Zakharov-Craig-Sulem system (\ref{c2:ww_equation}) becomes (see \cite{david} for more details) in dimensionless variables (we omit the "primes" for the sake of clarity): 
\begin{align}\begin{cases}
        \displaystyle{\partial_t \zeta - \frac{1}{\mu} \G_{\mu,\gamma}[\epsilon\zeta,\beta b]\psi = 0 }\\
        \ds \partial_t \psi + \zeta + \frac{\epsilon}{2}\vert\nablag\psi\vert^2 - \frac{\epsilon}{\mu}\frac{(\G_{\mu,\gamma}[\epsilon\zeta,\beta b]\psi +\epsilon\mu\nablag\zeta\cdot\nablag\psi)^2}{2(1+\epsilon^2\mu\mid\nablag\zeta\mid^2)}=0,  \label{c2:ww_equation1}
\end{cases}\end{align} where $\G_{\mu,\gamma}[\epsilon\zeta,\beta b]\psi$ stands for the dimensionless  Dirichlet-Neumann operator, 

\begin{equation*}
\G_{\mu,\gamma}[\epsilon\zeta,\beta b]\psi = \sqrt{1+\epsilon^2 \modd{\nablag\zeta}} \partial_n \Phi_{\vert z=\epsilon\zeta} =  (\partial_z\Phi-\mu\nabla^{\gamma}(\epsilon \zeta)\cdot\nabla^{\gamma}\Phi)_{\vert z=\epsilon\zeta},
\end{equation*} 
where $\Phi$ solves the Laplace equation with Neumann (at the bottom) and Dirichlet (at the surface) boundary conditions 
\begin{equation}\begin{aligned}
\Delta^{\mu,\gamma} \Phi = 0 \quad \text{in }  \lbrace (X,z)\in\R^d\times\R -1+\beta b(X) < z < \epsilon\zeta(X)\rbrace \\
\phi_{\vert z=\epsilon\zeta} = \psi,\quad \partial_n \Phi_{\vert z=-1+\beta b} = 0.
\end{aligned}\label{c2:dirichleta}\end{equation}
We used the following notations:
\begin{align*}
&\nabla^{\gamma} = {}^t(\partial_x,\gamma\partial_y) \quad &\text{ if } d=2\quad &\text{ and } &\nabla^{\gamma} = \partial_x &\quad \text{ if } d=1 \\
&\Delta^{\mu,\gamma} = \mu\partial_x^2+\gamma^2\mu\partial_y^2+\partial_z^2 \quad &\text{ if } d=2\quad &\text{ and } &\Delta^{\mu,\gamma} = \mu\partial_x^2+\partial_z^2 &\quad \text{ if } d=1
\end{align*}
and  $$ \partial_n \Phi_{\vert z=-1+\beta b}=\frac{1}{\sqrt{1+\beta^2\vert\nablag b\vert^2}} (\partial_z\Phi-\mu\nabla^{\gamma}(\beta b)\cdot\nabla^{\gamma}\Phi)_{\vert z=-1+\beta b}.$$ 

\subsection{Main result}
The linearized Water-Waves equations \eqref{c2:ww_equation1} in one dimension around a rest state of a flat surface and a zero velocity, in presence of a flat bottom can be read as
\begin{equation}
\left\{\begin{aligned} \partial_t\zeta -\frac{1}{\mu}\mathcal{G}_0\psi = 0\\
\partial_t \psi+\zeta=0 \\
(\zeta,\psi)(0)=(\zeta_0,\psi_0)
\end{aligned}\right.\label{c2:ww_linear}
\end{equation}
where $(t,x)\in\R\times\R$.   We denote $$\frac{1}{\mu}\mathcal{G}_0=\frac{1}{\mu}\G[0,0]$$ the Dirichlet-Neumann operator in $\zeta=0$ with a flat bottom, which explicit formulation is given by its Fourier transform \begin{equation}\frac{1}{\mu}\widehat{\mathcal{G}_0 f}(\xi) = \frac{\vert\xi\vert\tanh(\sqrt{\mu}\vert\xi\vert)}{\sqrt{\mu}}\widehat{f}(\xi) ,\label{c2:defGo}\end{equation} for all $f\in\mathcal{S}'(\R)$ where $\mu$ is the shallowness parameter (see for instance \cite{david} for more details). The equation \eqref{c2:ww_linear} leads to the following equation for $\zeta$:
$$\partial_t^2\zeta+\frac{1}{\mu}\mathcal{G}_0\zeta = 0$$ which is similar to the wave equation for low frequencies, and to the Water-Wave equation in infinite depth $$\partial_t^2 \zeta+(-\Delta)^{1/2}\zeta=0$$ where $\Delta = \partial_x^2$, for high frequencies. In order to study the solutions of the linearized system \eqref{c2:ww_linear}, we are therefore led to study the decay in time of the operator $e^{it\omega(D)}$ where \begin{equation*}\omega : \left\{ \begin{aligned}\R &\longrightarrow\R \\
\xi &\longmapsto \sqrt{\frac{\vert\xi\vert\tanh(\sqrt{\mu}\vert\xi\vert)}{\sqrt{\mu}}}.
\end{aligned}\right. \end{equation*}
The dispersive nature of the Water-Waves equations in infinite depth plays a key role in the proof of long time or global time results: see for instance \cite{wu2} for almost global well-posedness in $2d$, \cite{wu2011global} for $3d$ global well-posedness,  Ionescu-Pusateri \cite{ionescu1}, Alazard-Delort \cite{alazard_delort1} and \cite{alazard_delort2} for Global well-posedness in $2d$, \cite{germain2012global} for the global well-posedness in $3d$. However, there are to our knowledge only few results on decay estimates for the Water-Waves equations in finite depth (see for instance \cite{melinand}).  Recently, Aynur Bulut proved in \cite{aynur} an $L^2$ based norm-$L^\infty$ decay estimate for the linear Water-Waves equation in infinite depth:
\begin{equation}\vert e^{it ( -\Delta)^{1/4}}\varphi\vert_\infty \leq C(1+\vert t\vert)^{-1/2}(\vert\varphi\vert_{H^1}+\vert x\partial_x\varphi \vert_{L^2})\label{c2:aynur_estimate}.\end{equation}
As for all oscillatory integrals estimates, the proof of this result relies only on the behaviour of the operator $(-\Delta)^{1/2}$, which is the same as the behaviour of $\frac{1}{\mu}\mathcal{G}_0$ (recall the definition \eqref{c2:defGo}) for high frequencies.  We therefore adapt this proof to get a similar result in the case of a finite depth, with a very special attention given to the dependence in the shallowness parameter $\mu$. As one shall see later, this result gives Bulut's estimate in the limit $\mu$ goes to $+\infty$. We prove in Section \ref{c2:dispersion_section} of this paper the following result:
\begin{theorem}\label{c2:theorem_vague}
Let \begin{equation*}\omega : \left\{ \begin{aligned}\R &\longrightarrow\R \\
\xi &\longmapsto \sqrt{\frac{\vert\xi\vert\tanh(\sqrt{\mu}\vert\xi\vert)}{\sqrt{\mu}}}.
\end{aligned}\right. \end{equation*}
Then, there exists $C>0$ independent on $\mu$ such that, for all $\mu>0$:
$$\forall t>0,\qquad \forall \varphi\in\mathcal{S}(\R)\qquad \vert e^{it\omega(D)}\varphi\vert_\infty \leq C(\frac{1}{\mu^{1/4}}\frac{1}{(1+t/\sqrt{\mu})^{1/8}}+\frac{1}{(1+t/\sqrt{\mu})^{1/2}})(\vert\varphi\vert_{H^1}+\vert x\partial_x\varphi\vert_2).$$ 
\end{theorem}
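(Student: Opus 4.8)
## Proof proposal

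\textbf{Overall strategy.} The plan is to treat this as an oscillatory integral estimate via stationary phase. Writing $e^{it\omega(D)}\varphi(x) = \frac{1}{2\pi}\int e^{i(x\xi + t\omega(\xi))}\widehat{\varphi}(\xi)\,d\xi$, the decay in time comes from the nondegeneracy of the phase $\phi(\xi) = \frac{x}{t}\xi + \omega(\xi)$. The key analytic input is control of $\omega'$, $\omega''$, and $\omega'''$ with explicit tracking of the $\mu$-dependence, since $\omega(\xi) = \sqrt{|\xi|\tanh(\sqrt{\mu}|\xi|)/\sqrt{\mu}}$ interpolates between a wave-like symbol $\sim \mu^{-1/4}|\xi|^{1/2}\cdot(\sqrt{\mu}|\xi|)^{1/2}$ behaviour for low frequencies and the infinite-depth symbol $|\xi|^{1/2}$ for high frequencies. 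The two terms in the stated bound reflect exactly these two regimes: the $\mu^{-1/4}(1+t/\sqrt{\mu})^{-1/8}$ term should come from the low/intermediate frequency part where $\omega''$ degenerates, and the $(1+t/\sqrt{\mu})^{-1/2}$ term from the high-frequency part where the estimate is essentially Bulut's. I would first rescale: setting $\xi = \eta/\sqrt{\mu}$ one finds $\omega(\xi) = \mu^{-1/2}\sqrt{|\eta|\tanh|\eta|}\cdot(\text{const})$, more precisely $\omega = \frac{1}{\sqrt[4]{\mu}}\cdot\frac{1}{\sqrt{\sqrt\mu}}\sqrt{|\eta|\tanh|\eta|}$ — the cleanest route is to write $t\omega(\xi) = (t/\sqrt{\mu})\,\widetilde\omega(\sqrt{\mu}\xi)$ for the normalized symbol $\widetilde\omega(\eta)=\sqrt{|\eta|\tanh|\eta|}$, which is precisely why $t/\sqrt{\mu}$ is the natural time variable.

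\textbf{Key steps.} First, establish the symbol estimates for $g(\eta) := \sqrt{|\eta|\tanh|\eta|}$ on $\eta>0$: show $g(\eta)\sim \eta$ for small $\eta$ and $g(\eta)\sim\sqrt\eta$ for large $\eta$, and crucially compute $g''$. One expects $g''(\eta)\to 0$ both as $\eta\to 0$ (linear part, no curvature) and as $\eta\to\infty$ (decaying like $\eta^{-3/2}$), with a single sign region, so that $g''$ vanishes only in the limits — this is the degeneracy forcing the weaker $1/8$ exponent rather than $1/2$ globally, and it is why one needs the $|x\partial_x\varphi|_2$ weight (equivalently $\partial_\xi\widehat\varphi \in L^2$) to absorb the loss. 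Second, do a Littlewood–Paley / dyadic decomposition in $\eta$ (equivalently in $\xi$), splitting $\widehat\varphi = \sum_k \widehat{\varphi_k}$. Third, on each piece run the $TT^*$ or van der Corput argument: on the high-frequency pieces use that $|g''|\gtrsim 2^{-3k/2}$ to get a van der Corput bound $\lesssim (2^{-3k/2} s)^{-1/2}$ with $s = t/\sqrt\mu$, combined with the trivial $L^1\to L^\infty$ bound $\lesssim 2^{k/2}\|\widehat{\varphi_k}\|_{L^1}$ and the Cauchy–Schwarz estimate $\|\widehat{\varphi_k}\|_{L^1}\lesssim 2^{-k/2}\|\varphi_k\|_{H^1}^{1/2}\|x\partial_x\varphi_k\|_{L^2}^{1/2}$-type interpolation; optimize the dyadic sum. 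Fourth, for the low-frequency regime where $g''$ is small, either the time factor is harmless ($t/\sqrt\mu \lesssim 1$, use the trivial bound) or one uses the third-derivative bound $g'''$ and an Airy-type ($|s|^{-1/3}$) estimate, then re-optimizes; the overall minimum of the competing powers produces the $1/8$ exponent and the $\mu^{-1/4}$ prefactor. Finally, reassemble and check the limit $\mu\to\infty$ recovers \eqref{c2:aynur_estimate}.

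\textbf{Main obstacle.} The hard part will be the bookkeeping of the $\mu$-dependence through the dyadic optimization — in particular identifying exactly which dyadic scales $2^k$ are "low" versus "high" relative to the effective time $s=t/\sqrt\mu$, and verifying that the worst case over $k$ genuinely gives the exponent $1/8$ and the prefactor $\mu^{-1/4}$ rather than something better or worse. A secondary technical point is getting sharp two-sided bounds on $g''$ (and its sign) uniformly on all of $(0,\infty)$, since $\tanh$ and its derivatives must be estimated carefully near $0$ and near $\infty$ simultaneously; the cleanest approach is to write $g^2 = \eta\tanh\eta$, differentiate that relation twice to express $g''$ in terms of $g$, $g'$ and elementary functions of $\eta$, and then estimate. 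I would also need to be careful that the weight $x\partial_x\varphi$ — rather than $x\varphi$ — is the right one, which ties to the fact that the low-frequency/wave part of the symbol contributes no curvature and the derivative loss must land on the right norm; tracking this through the interpolation inequalities for $\|\widehat{\varphi_k}\|_{L^1}$ is where sign errors are most likely.
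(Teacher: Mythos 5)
Your overall architecture matches the paper's: split frequencies at $\vert\xi\vert\sim 1/\sqrt{\mu}$, rescale so that $s=t/\sqrt{\mu}$ is the effective time, and treat the high frequencies by a Littlewood--Paley decomposition with van der Corput estimates, feeding in the weighted bounds $\vert\partial_\xi\widehat{P_k\varphi}\vert_2\lesssim 2^{-k}(\vert\varphi\vert_2+\vert x\partial_x\varphi\vert_2)$ and $\vert\widehat{P_k\varphi}\vert_\infty\lesssim 2^{-sk}(\vert\varphi\vert_{H^s}+\vert x\partial_x\varphi\vert_2)$. Your low-frequency plan differs from the paper (the paper uses the degenerate second-derivative bound $\vert\Phi''\vert\geq\mu\vert\xi\vert$ on $[\delta,y_0/\sqrt{\mu}]$ together with a Cauchy--Schwarz cutoff of $\vert\xi\vert\leq\delta$, and optimizes $\delta$ to get exactly $\mu^{-3/16}t^{-1/8}=\mu^{-1/4}s^{-1/8}$, whereas you propose the third-derivative/Airy bound); your route can be made to work, but note that to use the weight $x\partial_x\varphi$ rather than $x\varphi$ you must still insert a cutoff near $\xi=0$, since writing $\partial_\xi\widehat{\varphi}=\frac{1}{\xi}\,\xi\partial_\xi\widehat{\varphi}$ costs a factor $1/\vert\xi\vert$ which is not square-integrable at the origin; after optimizing the cutoff you get $(\mu t)^{-1/6}=\mu^{-1/4}s^{-1/6}$, which implies (and slightly improves) the claimed first term, so your prediction that this regime "produces the $1/8$" is off but harmlessly so.

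The genuine gap is in the high-frequency part. Running, as you propose, a second-derivative van der Corput bound on each dyadic block (using $\vert\Phi''\vert\gtrsim\mu^{-1/4}2^{-3k/2}$) together with the trivial bound and then "optimizing the dyadic sum" does not produce the $(1+t/\sqrt{\mu})^{-1/2}$ term with a $\mu$-independent constant: the per-block estimate is $\lesssim\mu^{1/8}t^{-1/2}2^{3k/4}\cdot 2^{-k/2}=\mu^{1/8}t^{-1/2}2^{k/4}$, which grows in $k$, so the sum over the admissible range $2^k\lesssim 1+t/\sqrt{\mu}$ is dominated by the largest block and, even after the crossover with the trivial bound $2^{-k/2}\vert\varphi\vert_{H^1}$, yields only about $\mu^{-1/12}s^{-1/3}$. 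For large $\mu$ this is strictly weaker than $C(\mu^{-1/4}s^{-1/8}+s^{-1/2})$ in an intermediate regime (e.g. $s\sim\mu^{2/3}$), and in particular it cannot recover Bulut's $t^{-1/2}$ estimate in the limit $\mu\to+\infty$, which is the whole point of the second term. The missing idea — central to Bulut's argument and to the paper's treatment of the middle sum $S_2$ — is a further splitting of the intermediate dyadic range according to the position of the stationary point of the phase $\Phi(\xi)=\omega(\xi)+x\xi/t$, i.e. comparing $2^{k/2}$ with $\vert t/x\vert$ with a threshold constant of size $\mu^{-1/4}$ (because $\omega'\sim\mu^{-1/4}\vert\xi\vert^{-1/2}$ there). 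On the non-stationary blocks the first derivative satisfies $\vert x/t+\omega'\vert\gtrsim\mu^{-1/4}2^{-k/2}$, so first-order van der Corput gives $O(\mu^{1/4}/\vert t\vert)$ per block, and there are only $O(\log t)$ such blocks, so their total is $\lesssim\mu^{1/4}t^{-1/2}=s^{-1/2}$; only $O(\log\mu)$ blocks can contain a stationary point, and each of those is handled by an additional $\delta$-splitting around the critical point, combining the $\vert\widehat{P_k\varphi}\vert_\infty$ bound near it with second-order van der Corput away from it, giving $\mu^{1/8}t^{-1/2}$ per block. Without this stationary/non-stationary dichotomy the dyadic optimization loses a power of $t$ and the theorem as stated (uniform in $\mu$) is out of reach.
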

 Though $\frac{1}{\mu}\sqrt{\G_0}$ and the square root of the wave operator $(-\Delta)^{1/2}$ have the same behaviour for low frequencies, it is not the case for the second order derivatives of these operators. For this reason, one should not be surprised to have a dispersion result for the Water-Waves equations, while the wave equation in dimension $1$ is not dispersive.  \par\vspace{\baselineskip}

The decay in $\frac{1}{t^{1/8}}$ given by Theorem \ref{c2:theorem_vague} is however very bad. As one might be interested to have a better decay result, we also prove the following result, with different spaces:
\begin{theorem}\label{c2:theorembisvague}
With the notations of Theorem \ref{c2:theorem_vague}, the following estimates hold:\begin{enumerate}[itemsep=0pt]
\item There exists $C>0$ independent on $\mu$ such that, for all $\mu>0$:
\begin{align*}\forall t>0,\quad \forall \varphi\in\mathcal{S}(\R)\quad \vert e^{it\omega(D)}\varphi\vert_\infty &\leq C(\frac{1}{\mu^{3/4}}\frac{1}{(1+t/\sqrt{\mu})^{1/3}}\vert \varphi\vert_{L^1}\\&+\frac{1}{(1+t/\sqrt{\mu})^{1/2}}(\vert\varphi\vert_{H^1}+\vert x\partial_x\varphi\vert_2)).\end{align*}
\item There exists $C>0$ independent on $\mu$ such that, for all $\mu>0$:
\begin{align*}\forall t>0,\quad \forall \varphi\in\mathcal{S}(\R)\quad \vert e^{it\omega(D)}\varphi\vert_\infty &\leq C(\frac{1}{\mu^{3/4}}\frac{1}{(1+t/\sqrt{\mu})^{1/3}}\vert x\varphi\vert_{L^2}\\&+\frac{1}{(1+t/\sqrt{\mu})^{1/2}}(\vert\varphi\vert_{H^1}+\vert x\partial_x\varphi\vert_2)).\end{align*}
\end{enumerate}
\end{theorem}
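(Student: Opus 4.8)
The plan is to exploit the oscillatory–integral representation
$e^{it\omega(D)}\varphi(x)=\frac{1}{2\pi}\int_\R e^{i(x\xi+t\omega(\xi))}\widehat\varphi(\xi)\,d\xi=(K_t*\varphi)(x)$
and to split the frequencies into the two regimes of $\omega$: on the band $\sqrt\mu\,\vert\xi\vert\le c_0$ (a small absolute constant $c_0$) one has $\omega(\xi)=\vert\xi\vert-\tfrac{\mu}{6}\vert\xi\vert^{3}+O(\mu^{2}\vert\xi\vert^{5})$, a perturbation of the non–dispersive symbol $\vert\xi\vert$, whereas on $\sqrt\mu\,\vert\xi\vert\ge c_0$ one has $\omega(\xi)\simeq\mu^{-1/4}\vert\xi\vert^{1/2}$, the infinite–depth symbol (the $t/\sqrt\mu$ appearing throughout reflecting the rescaling $\xi\mapsto\xi/\sqrt\mu,\ t\mapsto t/\sqrt\mu$, which reduces $\omega$ to $\sqrt{\vert\cdot\vert\tanh\vert\cdot\vert}$). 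For the infinite–depth–like regime, and for the intermediate frequencies where $\omega''$ is small but non–zero, I would simply reuse the argument from the proof of Theorem~\ref{c2:theorem_vague}: a Littlewood–Paley decomposition together with van der Corput's lemma for second derivatives, combined with $\Vert\xi\partial_\xi\widehat\varphi\Vert_{L^2}\lesssim\vert x\partial_x\varphi\vert_2+\vert\varphi\vert_{L^2}$ in the spirit of \cite{aynur}; this is exactly what produces the common term $(1+t/\sqrt\mu)^{-1/2}(\vert\varphi\vert_{H^1}+\vert x\partial_x\varphi\vert_2)$. So all the new work would be to re–estimate $u^{\flat}:=e^{it\omega(D)}P^{\flat}\varphi$, with $P^{\flat}$ the smooth projection onto the innermost, genuinely degenerate low frequencies — improving the crude $t^{-1/8}$ treatment of that piece in Theorem~\ref{c2:theorem_vague} to a clean Airy decay $t^{-1/3}$.

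The key point is an Airy–type $L^\infty$ bound for the kernel $K_t^{\flat}$, $\widehat{K_t^{\flat}}=e^{it\omega}\chi^{\flat}$. Splitting the $\xi$–integral at $0$, on each half–line $\omega$ is smooth and $\vert\partial_\xi^{3}(x\xi+t\omega)\vert=t\vert\omega'''(\xi)\vert\ge c\,t\mu$, because $\omega'''=-\mu+O(\mu^{2}\xi^{2})$ on the band (this lower bound on the third derivative near the origin is the reason finite depth is dispersive while the $1$-d wave equation is not, as the paper already observes); since $\chi^{\flat}$ has $\mu$–independent total variation, van der Corput's lemma for third derivatives gives $\Vert K_t^{\flat}\Vert_{L^\infty}\lesssim(t\mu)^{-1/3}$, and together with the trivial bound by the measure of $\mathrm{supp}\,\chi^{\flat}$, and after the $\sqrt\mu$–rescaling, I would reach $\Vert K_t^{\flat}\Vert_{L^\infty}\lesssim\mu^{-3/4}(1+t/\sqrt\mu)^{-1/3}$. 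Statement~(1) then follows from Young's inequality $\vert u^{\flat}\vert_\infty\le\Vert K_t^{\flat}\Vert_{L^\infty}\vert\varphi\vert_{L^1}$ (with $P^{\flat}$ bounded on $L^1$ uniformly in $\mu$), added to the Theorem~\ref{c2:theorem_vague} contribution on the complementary frequencies.

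For statement~(2) I would keep the same bound on $K_t^{\flat}$ but replace Young's inequality by Cauchy–Schwarz in the physical variable: for each $x$,
\[
\left\vert u^{\flat}(x)\right\vert=\left\vert\int K_t^{\flat}(x-y)\,\langle y\rangle^{-1}\,(\langle y\rangle\varphi(y))\,dy\right\vert\le\left\Vert K_t^{\flat}(x-\cdot)\langle\cdot\rangle^{-1}\right\Vert_{L^2}\left\Vert\langle\cdot\rangle\varphi\right\Vert_{L^2}\lesssim\Vert K_t^{\flat}\Vert_{L^\infty}\,(\vert\varphi\vert_{L^2}+\vert x\varphi\vert_{L^2}),
\]
using $\langle\cdot\rangle^{-1}\in L^{2}(\R)$; the $\vert\varphi\vert_{L^2}$ term is absorbed into the $(1+t/\sqrt\mu)^{-1/2}\vert\varphi\vert_{H^1}$ contribution, which closes~(2).

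The hard part will be the $\mu$–bookkeeping in the Airy estimate: van der Corput and the $\sqrt\mu$–rescaling have to be run together, the degenerate window must be chosen so that the $t^{-1/3}$ Airy decay and the measure bound interpolate to exactly $\mu^{-3/4}(1+t/\sqrt\mu)^{-1/3}$, and one must check that all frequencies between that window and the scale $1/\sqrt\mu$ are really absorbable — via the non–degenerate ($k=2$) van der Corput estimate and the weighted $H^1$ norm — into the $(1+t/\sqrt\mu)^{-1/2}$ term (with at worst a logarithmic loss, itself beaten by the $\mu$–prefactor). A secondary nuisance is the point $\xi=0$: the half–frequency cutoffs implicit in peeling off the non–decaying factor $e^{it\vert D\vert}$ are discontinuous there, and controlling this endpoint uniformly in $\mu$ is presumably where the extra weighted assumption $\vert x\varphi\vert_{L^2}$ in statement~(2) is genuinely needed.
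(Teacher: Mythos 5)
Your treatment of estimate (1) follows essentially the paper's own route: the paper also bounds the low-frequency piece as a convolution, $\vert K_t\vert_\infty\vert\varphi\vert_{L^1}$ with $\widehat{K_t}$ supported in $\lbrace\vert\xi\vert\leq y_0/\sqrt\mu\rbrace$, and controls $\vert K_t\vert_\infty\leq C(\mu t)^{-1/3}$ by the third-order Van der Corput lemma, using $\omega'''(\xi)=\mu g'''(\sqrt\mu\xi)$ with $g'''(\xi)\sim -1$ at the origin, while the remaining frequencies are recycled verbatim from the proof of Theorem \ref{c2:theorem_dispersion}. So for (1) your argument is the paper's, modulo the harmless use of a smooth cutoff with $\mu$-independent total variation.

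For estimate (2), however, there is a genuine gap. Your physical-space Cauchy--Schwarz with the weight $\langle y\rangle$ unavoidably produces $\vert\varphi\vert_2+\vert x\varphi\vert_2$, i.e.\ an extra term of size $\mu^{-3/4}(1+t/\sqrt\mu)^{-1/3}\vert\varphi\vert_{2}$, and your claim that this term can be absorbed into $(1+t/\sqrt\mu)^{-1/2}\vert\varphi\vert_{H^1}$ is false: absorption would require $(1+t/\sqrt\mu)^{1/6}\leq C\mu^{3/4}$, which fails for fixed $\mu$ as $t\to\infty$, precisely the regime the theorem addresses; nor is $\vert\varphi\vert_2$ controlled by $\vert x\varphi\vert_2$. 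Replacing $\langle y\rangle$ by the homogeneous weight $\vert y\vert$ does not help since $\vert y\vert^{-1}\notin L^2$ near the origin, so any convolution-side argument of this type leaves an $L^2$ remainder and proves only a strictly weaker variant of (2). The paper avoids this by staying on the Fourier side: after reducing to $\widehat\varphi$ supported in $[0,y_0/\sqrt\mu)$ it integrates by parts in $\xi$, writing the low-frequency integral as $-\int\big(\int_0^\xi e^{i(t\omega(s)+xs)}ds\big)\,\partial_\xi\widehat\varphi(\xi)\,d\xi$ (no boundary terms), then applies Cauchy--Schwarz in $\xi$: the inner integral is bounded by $C(\mu t)^{-1/3}$ by the same third-order Van der Corput argument, the length of the frequency interval contributes the factor $\sqrt{y_0}\,\mu^{-1/4}$, and $\vert\partial_\xi\widehat\varphi\vert_2$ equals $\vert x\varphi\vert_2$ (up to a constant), which yields exactly $\mu^{-3/4}(t/\sqrt\mu)^{-1/3}\vert x\varphi\vert_2$ with no $\vert\varphi\vert_2$ term. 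Your closing speculation that the endpoint $\xi=0$ is where $\vert x\varphi\vert_{L^2}$ is ``genuinely needed'' is beside the point: the weighted norm enters solely through this $\partial_\xi\widehat\varphi$ term, and that is the step your proposal is missing.
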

As one should remark, the decay given by Theorem \ref{c2:theorembisvague} is better than one of Theorem \ref{c2:theorem_vague}. However, for a practical use of such decay, one should prove that the solutions are bounded in $L^1$ or in $\vert x\cdot\vert_2$ norm, which is more difficult than proving a local existence result in $\vert x\partial_x\cdot\vert_2$-norm. In view of practical use of Theorem \ref{c2:theorem_vague}, we therefore prove in Section \ref{c2:sectionweight} and in dimensions $d=1,2$ a local existence result for the full Water-Waves equations \eqref{c2:ww_equation1} in weighted Sobolev spaces. The proof consists in an adaptation of the local existence result by \cite{alvarez}, and a technical proof of the commutator $[\G,x]$. \par\vspace{\baselineskip}

\begin{remark}
\begin{itemize}[label=--,itemsep=0pt]
\item All the dispersive effects proved in this paper are in dimension $d=1$. A similar result in dimension $2$ may however not be difficult to obtain, as the phase of the oscillatory integral studied has a radial symmetry.
\item As mentioned before, in all this paper, a very special attention is given to the dependence of the estimates with respect to $\mu$. It allows in the use of the dispersive estimates to identify different regimes, considering the size of the ratio $\frac{\epsilon}{\mu^{3/2}}$, in which the non-linear effects may overcome or not the linear (and thus dispersive) effects. Such study has been done for example in \cite{mesognon4}.\end{itemize}
\end{remark}

The plan of the article is the following:
\begin{itemize}[label=--,itemsep=0pt]
\item In Section \ref{c2:dispersion_section}, we prove a dispersive estimate for the linearized Water-Waves equation around a flat bottom and a flat surface in dimension $d=1$.
\item In Section \ref{c2:sectionweight}, we give a local existence result for the full Water-Waves equation \eqref{c2:ww_equation1} with non flat bottom, in weighted Sobolev spaces and in dimensions $d=1,2$.
\end{itemize}
 
 \subsection{Notations}\label{c2:notations}
We introduce here all the notations used in this paper.
 \subsubsection{Operators and quantities} Because of the use of dimensionless variables (see before the "dimensionless equations" paragraph), we use the following twisted partial operators: 
\begin{align*}
&\nabla^{\gamma} = {}^t(\partial_x,\gamma\partial_y) \quad &\text{ if } d=2\quad &\text{ and } &\nabla^{\gamma} = \partial_x &\quad \text{ if } d=1 \\
&\Delta^{\mu,\gamma} = \mu\partial_x^2+\gamma^2\mu\partial_y^2+\partial_z^2 \quad &\text{ if } d=2\quad &\text{ and } &\Delta^{\mu,\gamma} = \mu\partial_x^2+\partial_z^2 &\quad \text{ if } d=1 \\
&\nabla^{\mu,\gamma} = {}^t(\sqrt{\mu}\partial_x,\gamma\sqrt{\mu}\partial_y,\partial_z)\quad &\text{ if } d=2\quad &\text{ and } &{}^t(\sqrt{\mu}\partial_x,\partial_z) &\quad \text{ if } d=1. 
\end{align*}
\begin{remark}All the results proved in this paper do not need the assumption that the typical wave lengths are the same in both directions, i.e. $\gamma = 1$. However, if one is not interested in the dependence of $\gamma$, it is possible to take $\gamma = 1$ in all the following proofs. A typical situation where $\gamma\neq 1$ is for weakly transverse waves for which $\gamma=\sqrt{\mu}$; this leads to weakly transverse Boussinesq systems and the Kadomtsev–Petviashvili equation (see \cite{lannes_saut}).
\end{remark}
For all $\alpha = (\alpha_1,..,\alpha_d)\in\mathbb{N}^d$, we write $$\partial^\alpha = \partial^{\alpha_1}_{x_1}...\partial^{\alpha_d}_{x_d}$$ and $$\vert\alpha\vert = \alpha_1+...+\alpha_d.$$ 
We denote for all $a,b\in\R$: $$a\vee b = \max(a,b).$$ We denote, for all $\varphi\in\S'(\R^d)$, the Fourier transform of $\varphi$ by $\mathcal{F}(\varphi)$ of more simply $\widehat{\varphi}$.\par\vspace{\baselineskip}

 We use the classical Fourier multiplier 
$$\Lambda^s = (1-\Delta)^{s/2} \text{ on } \mathbb{R}^d$$ defined by its Fourier transform as $$\mathcal{F}(\Lambda^s u)(\xi) = (1+\vert\xi\vert^2)^{s/2}(\mathcal{F}u)(\xi)$$ for all $u\in\mathcal{S}'(\mathbb{R}^d)$.
The operator $\mathfrak{P}$ is defined as 
\begin{equation}
\mathfrak{P} = \frac{\vert D^{\gamma}\vert}{(1+\sqrt{\mu}\vert D^{\gamma}\vert)^{1/2}}\label{c2:defp}
\end{equation}
where $$\mathcal{F}(f(D)u)(\xi) = f(\xi)\mathcal{F}(u)(\xi)$$ is defined for any smooth function of polynomial growth $f$ and $u\in\mathcal{S}'(\mathbb{R}^d)$. The pseudo-differential operator $\mathfrak{P}$ acts as the square root of the Dirichlet Neumann operator, since $\B\sim \frac{1}{\mu}\sqrt{\G_0}$ (recall the definition of $\G_0$ given by \eqref{c2:defGo}) where the implicit constant does not depend on $\mu$. \\	
We denote as before by $\G_{\mu,\gamma}$ the Dirichlet-Neumann operator, which is defined as followed in the scaled variables:
\begin{equation*}
\G_{\mu,\gamma}\psi = \G_{\mu,\gamma}[\epsilon\zeta,\beta b]\psi = \sqrt{1+\epsilon^2 \modd{\nablag\zeta}} \partial_n \Phi_{\vert z=\epsilon\zeta} =  (\partial_z\Phi-\mu\nabla^{\gamma}(\epsilon\zeta)\cdot\nabla^{\gamma}\Phi)_{\vert z=\epsilon\zeta},
\end{equation*} 
where $\Phi$ solves the Laplace equation 

\begin{align*}
\begin{cases}
\Delta^{\gamma,\mu}\Phi = 0\\
\Phi_{\vert z=\epsilon\zeta} = \psi,\quad \partial_n \Phi_{\vert z=-1+\beta b} = 0.
\end{cases}
\end{align*}

For the sake of simplicity, we use the notation $\G[\epsilon\zeta,\beta b]\psi$ or even $\G\psi$ when no ambiguity is possible.

\subsubsection{The Dirichlet-Neumann problem}
In order to study the Dirichlet-Neumann problem \eqref{c2:dirichleta}, we need to map the domain occupied by the water $\Omega_t$ into a fixed domain (and not on a moving subset). For this purpose, we define:
$$\zeta^\delta(.,z) = \chi(\delta z\D)\zeta,\qquad b^\delta (.,z)=\chi(\delta(z+1)\D)b$$ where $\chi:\R\longrightarrow\R$ is a compactly supported smooth function equals to one in the neighbourhood of the origin, and $\delta>0$. We now introduce the following fixed strip:
$$\mathcal{S} = \mathbb{R}^d\times (-1;0)$$
and the diffeomorphism 
\begin{equation} \label{c2:diffeol}
\Sigma : \begin{aligned} \S&\rightarrow\Omega_t \\
(X,z)&\mapsto (X,(1+\epsilon\zeta^\delta(X)-\beta^\delta b(X))z+\epsilon\zeta^\delta(X))
\end{aligned}.
\end{equation}

It is quite easy to check that $\Phi$ is the variational solution of \eqref{c2:dirichleta} if and only if $\phi = \Phi\circ\Sigma$ is the variational solution of the following problem:
\begin{align}\begin{cases}
        \nabla^{\mu,\gamma}\cdot P(\Sigma)\nabla^{\mu,\gamma} \phi = 0  \label{c2:dirichletneumann}\\
        \phi_{z=0}=\psi,\quad \partial_n\phi_{z=-1} = 0,  \end{cases}
\end{align}
and where $$P(\Sigma) = \vert \det  J_{\Sigma}\vert J_{\Sigma}^{-1}~^t(J_{\Sigma}^{-1}),$$ where $J_{\Sigma}$ is the Jacobian matrix of the diffeomorphism $\Sigma$. 
\begin{remark} By smoothing the functions $\zeta$ and $b$ in the choice of the diffeomorphism $\Sigma$ as in \cite{alvarez}, we ensure a better estimate for the solutions of \eqref{c2:dirichletneumann}.
\end{remark}
For a complete statement of the result, and a proof of existence and uniqueness of solutions to these problems, see later Section \ref{c2:commut_section} and also \cite{david} Chapter 2. \par 
We introduce here the notations for the shape derivatives of the Dirichlet-Neumann operator. More precisely, we define the  open set  $\mathbf{\Gamma}\subset H^{t_0+1}(\mathbb{R}^d)^2$ as:
$$\mathbf{\Gamma} =\lbrace \Gamma=(\zeta,b)\in H^{t_0+1}(\mathbb{R}^d)^2,\quad \exists h_0>0,\forall X\in\mathbb{R}^d, \epsilon\zeta(X) +1-\beta b(X) \geq h_0\rbrace$$ and, given a $\psi\in \overset{.}H{}^{s+1/2}(\mathbb{R}^d)$, the mapping: \begin{displaymath}\G[\epsilon\cdot,\beta\cdot] : \left. \begin{array}{rcl}
&\mathbf{\Gamma} &\longrightarrow H^{s-1/2}(\mathbb{R}^d) \\
&\Gamma=(\zeta,b) &\longmapsto \G[\epsilon\zeta,\beta b]\psi.
\end{array}\right.\end{displaymath} We can prove the differentiability of this mapping. See Appendix \ref{c2:appendixA} for more details. We denote $d^j\G(h,k)\psi$ the $j$-th derivative of the mapping at $(\zeta,b)$ in the direction $(h,k)$. When we only differentiate in one direction, and when no ambiguity is possible, we simply denote $d^j\G(h)\psi$ or $d^j \G(k)\psi$.

\subsubsection{Functional spaces}
The standard scalar product on $L^2(\mathbb{R}^d)$ is denoted by $(\quad,\quad)_2$ and the associated norm $\vert\cdot\vert_2$. We will denote the norm of the Sobolev spaces $H^s(\mathbb{R}^d)$ by $\vert \cdot\vert_{H^s}$. We denote the norms of $W^{k,\infty}(\R^d)$ by $\vert\cdot\vert_{W^{k,\infty}}$ or simply $\vert\cdot\vert_\infty = \vert\cdot\vert_{W^{0,\infty}}$ when no ambiguity is possible. \par\vspace{\baselineskip}

We introduce the following functional Sobolev-type spaces, or Beppo-Levi spaces: 
\begin{definition}
We denote $\dot{H}^{s+1}(\mathbb{R}^d)$ the topological vector space 
$$\dot{H}^{s+1}(\mathbb{R}^d) = \lbrace u\in L^2_{loc}(\mathbb{R}^d),\quad \nabla u\in H^s(\mathbb{R}^d)\rbrace$$
endowed with the (semi) norm $\vert u\vert_{\dot{H}^{s+1}(\mathbb{R}^d)} = \vert\nabla u\vert_{H^s(\mathbb{R}^d)} $.
\end{definition}
Just remark that $\dot{H}^{s+1}(\mathbb{R}^d)/\mathbb{R}^d$ is a Banach space (see for instance \cite{lions}).\par

The space variables $z\in\mathbb{R}$ and $X\in\mathbb{R}^d$ play different roles in the equations since the Euler formulation (\ref{c2:euler}) is posed for $(X,z)\in \Omega_t$. Therefore, $X$ lives in the whole space $\mathbb{R}^d$ (which allows to take fractional Sobolev type norms in space), while $z$ is actually bounded.  For this reason, we denote the $L^2$ norm on $\S$ by $\norme{\cdot}$, and we introduce the following Banach spaces: 
\begin{definition}
The Banach space $(H^{s,k}((-1,0) \times\mathbb{R}^d),\Vert .\Vert_{H^{s,k}})$ is defined by 
$$H^{s,k}((-1,0) \times\mathbb{R}^d) = \bigcap_{j=0}^{k} H^j((-1,0);H^{s-j}(\mathbb{R}^d)),\quad \Vert u\Vert_{H^{s,k}} = \sum_{j=0}^k \Vert \Lambda^{s-j}\partial_z^j u\Vert_2.$$
\end{definition}
We will denote $\norme{\cdot}_{H^s}=\norme{\cdot}_{H^{s,0}}$ when no ambiguity is possible. To sum up, $\vert\cdot\vert$ will denote a norm on $\R^d$ while $\Vert\cdot\Vert$ will denote a norm on the flat strip $\S$.

\section{A dispersive estimate for the linear Water-Waves equations in dimension $1$}\label{c2:dispersion_section}
We prove in this section the dispersive estimate of Theorem \ref{c2:theorem_vague} and Theorem \ref{c2:theorembisvague}. We first introduce some classical results on the oscillatory integrals, and some technical results on the Littlewood-Paley decomposition.
\subsection{Technical tools}
\subsubsection{Littlewood-Palay decomposition}
We briefly recall the Littlewood-Paley decomposition. Let $\psi\in C_0^\infty(\R)$ be such that\begin{center}
 $\supp\psi \subset (-1;1)$, $\psi(\xi) = 1$ for $\vert \xi\vert\leq 1/2$. \end{center}  We now define for all $k\in \mathbb{Z}$, a function $\psi_k$ by: \begin{equation}
\forall\xi\in\R,\qquad \psi_k(\xi) = \psi(\xi/2^k)-\psi(\xi/2^{k-1})\label{c2:defpsik}
 \end{equation}
 which is compactly supported in $2^{k-1}\leq \vert\xi\vert\leq 2^{k+1}$. We then define the operators $P_k$ for all $k\in\Z$ by: \begin{equation}
\forall\xi\in\R,\qquad  \widehat{P_k f}(\xi) = \psi_k(\xi)\widehat{f}(\xi)\label{c2:defpk}
 \end{equation} for all $f\in\mathcal{S}'(\R)$. We recall here Bernstein's Lemma:
 \begin{lemma}Let $k\in\mathbb{Z}$ and $P_k$ defined by \eqref{c2:defpk}. For every $1\leq p\leq q\leq\infty$ and all $s\geq 0$, one has:
 $$\vert P_k g\vert_{L^q}\leq C 2^{k(1/p-1/q)} \vert P_k g\vert_{L^p}$$ 
 and
 $$\vert P_k g\vert_{L^p}\leq C 2^{-sk}\vert (-\Delta)^{s/2}P_k g\vert_{L^p}$$ for all $g\in\mathcal{S}(\R)$, where $C$ does not depend on $k, s, p, q$.\label{c2:bernstein}
 \end{lemma}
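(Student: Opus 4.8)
The plan is to deduce both estimates from Young's convolution inequality, after writing $P_k g$ as the convolution of a frequency-localized function with the kernel of a Fourier multiplier supported in a dyadic annulus at frequency scale $2^k$. All the powers of $2^k$ that appear — the factor $2^{k(1/p-1/q)}$ in the first estimate and $2^{-ks}$ in the second — will be produced by a single dyadic rescaling $\xi\mapsto 2^{-k}\xi$, and the fact that $C$ may be taken independent of $k,p,q$ will come from the elementary observation that the $L^a(\R)$-norm of a fixed Schwartz function is bounded uniformly for $a\in[1,\infty]$. Recall that $\psi_k$ is supported in a dyadic annulus $\{\kappa_0 2^k\le\vert\xi\vert\le\kappa_1 2^k\}$ with $0<\kappa_0<\kappa_1$ absolute; fix a bump $\tilde\psi\in C_0^\infty(\R)$ equal to $1$ on $\{\kappa_0\le\vert\xi\vert\le\kappa_1\}$ and supported in $\{\kappa_0/2\le\vert\xi\vert\le2\kappa_1\}$, and set $\tilde\psi_k(\xi)=\tilde\psi(2^{-k}\xi)$, so that $\tilde\psi_k\equiv1$ on $\supp\psi_k$ and hence $\tilde\psi_k(D)P_k g=P_k g$.

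For the first inequality I would write $P_k g=K_k*P_k g$ with $K_k=\mathcal{F}^{-1}\tilde\psi_k$; the scaling identity $\mathcal{F}^{-1}[\tilde\psi(2^{-k}\cdot)](x)=2^{k}(\mathcal{F}^{-1}\tilde\psi)(2^{k}x)$ gives $\vert K_k\vert_{L^a}=2^{k(1-1/a)}\vert\mathcal{F}^{-1}\tilde\psi\vert_{L^a}$ for all $a\in[1,\infty]$. Choosing $a$ by $1/a=1-(1/p-1/q)$ — admissible since $1\le p\le q\le\infty$ forces $0\le1/p-1/q\le1$, hence $a\in[1,\infty]$ — Young's inequality $\vert K_k*P_k g\vert_{L^q}\le\vert K_k\vert_{L^a}\vert P_k g\vert_{L^p}$ combined with $1-1/a=1/p-1/q$ yields $\vert P_k g\vert_{L^q}\le C\,2^{k(1/p-1/q)}\vert P_k g\vert_{L^p}$ with $C=\sup_{a\in[1,\infty]}\vert\mathcal{F}^{-1}\tilde\psi\vert_{L^a}<\infty$, depending only on the fixed cutoff.

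For the second inequality, the key point is that on $\supp\psi_k$ the symbol $\vert\xi\vert^{-s}$ factors as $2^{-ks}m(2^{-k}\xi)$ where $m(\eta):=\tilde\psi(\eta)\vert\eta\vert^{-s}$ is smooth with compact support bounded away from the origin, so that $\mathcal{F}^{-1}m\in\mathcal{S}(\R)\subset L^1(\R)$. Since $\widehat{P_k g}(\xi)=\psi_k(\xi)\widehat g(\xi)=\dfrac{\tilde\psi_k(\xi)}{\vert\xi\vert^{s}}\vert\xi\vert^{s}\psi_k(\xi)\widehat g(\xi)$, this gives $P_k g=2^{-ks}\big[2^{k}(\mathcal{F}^{-1}m)(2^{k}\cdot)\big]*(-\Delta)^{s/2}P_k g$, and the bracketed kernel has $L^1$-norm equal to $\vert\mathcal{F}^{-1}m\vert_{L^1}$ by the same rescaling; Young's inequality with an $L^1$ kernel then gives $\vert P_k g\vert_{L^p}\le C\,2^{-ks}\vert(-\Delta)^{s/2}P_k g\vert_{L^p}$, uniformly in $k$ and $p$, with $C=\vert\mathcal{F}^{-1}m\vert_{L^1}$ depending only on $\tilde\psi$ and $s$.

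As this is a classical fact, I expect no genuine obstacle: the only care required is the bookkeeping of constants — verifying that the conjugate exponent $a$ always lies in $[1,\infty]$, that the dyadic rescaling produces exactly the asserted powers of $2^k$, and that the residual constants are $L^a$-norms of a fixed Schwartz function, hence finite and independent of $k,p,q$.
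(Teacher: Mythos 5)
Your proposal is correct, and it is the standard textbook argument (a fattened dyadic cutoff $\tilde\psi_k$ equal to $1$ on $\supp\psi_k$, the rescaling identity for its kernel, and Young's inequality with exponent $1/a=1-(1/p-1/q)$, respectively with an $L^1$ kernel for the derivative bound); the paper itself only recalls this lemma as a classical fact and gives no proof, so there is nothing in the text to compare against, and your exponent bookkeeping and the uniformity in $k,p,q$ via $\vert h\vert_{L^a}\leq\max(\vert h\vert_{L^1},\vert h\vert_{L^\infty})$ are all fine. The one discrepancy is the dependence on $s$: your constant $\vert\mathcal{F}^{-1}(\tilde\psi\,\vert\cdot\vert^{-s})\vert_{L^1}$ does depend on $s$, while the statement claims it does not; in fact literal $s$-uniformity is impossible (testing the second inequality on $g$ with $\widehat g$ concentrated near the inner edge $\vert\xi\vert\approx 2^{k-1}$ of the annulus forces $C\geq 2^{s}$ up to an absolute factor), so the lemma's phrasing is simply loose on this point, and since the paper only invokes it for finitely many fixed values of $s$ (at most $s=1$), your proof is entirely adequate.
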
 
 We also give the two following technical results (see for instance \cite{aynur} for a complete proof):
 \begin{lemma}
Let $k\in\mathbb{Z}$ and $P_k$ defined by \eqref{c2:defpk}. One has:
 $$\vert \partial_\xi \widehat{P_k\varphi}\vert_2 \leq C 2^{-k}(\vert\varphi\vert_2+\vert x\partial_x\varphi\vert_2),$$ for all $\varphi\in\mathcal{S}(\R)$, where $C$ does not depend on $k$.\label{c2:aynur1}
 \end{lemma}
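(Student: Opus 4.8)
The plan is to differentiate, on the Fourier side, the identity $\widehat{P_k\varphi}(\xi) = \psi_k(\xi)\widehat\varphi(\xi)$ and to control the two resulting terms separately via Plancherel's theorem. Writing
$$\partial_\xi\widehat{P_k\varphi}(\xi) = \psi_k'(\xi)\,\widehat\varphi(\xi) + \psi_k(\xi)\,\partial_\xi\widehat\varphi(\xi),$$
I would first dispatch the term $\psi_k'\,\widehat\varphi$. From the definition \eqref{c2:defpsik} one has $\psi_k'(\xi) = 2^{-k}\psi'(\xi/2^k) - 2^{1-k}\psi'(\xi/2^{k-1})$, so $\vert\psi_k'\vert_\infty \le 3\,\vert\psi'\vert_\infty\,2^{-k}$ with a constant independent of $k$; Plancherel then yields $\vert\psi_k'\widehat\varphi\vert_2 \le C\,2^{-k}\,\vert\varphi\vert_2$.

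The point of the lemma is the second term, where the weight $x\partial_x\varphi$ must be brought into play. Here I would use the elementary identity $\widehat{x\partial_x\varphi}(\xi) = -\widehat\varphi(\xi) - \xi\,\partial_\xi\widehat\varphi(\xi)$ (a direct consequence of $\widehat{\partial_x\varphi}(\xi) = i\xi\widehat\varphi(\xi)$ and $\widehat{xf}(\xi) = i\partial_\xi\widehat f(\xi)$, valid for any choice of Fourier normalization), which for $\xi\neq 0$ rewrites as
$$\partial_\xi\widehat\varphi(\xi) = -\frac{1}{\xi}\big(\widehat\varphi(\xi) + \widehat{x\partial_x\varphi}(\xi)\big) = -\frac{1}{\xi}\,\widehat{\partial_x(x\varphi)}(\xi).$$
Since $\psi_k$ is supported in $\lbrace 2^{k-1}\le \vert\xi\vert\le 2^{k+1}\rbrace$, the multiplier $\xi\mapsto \psi_k(\xi)/\xi$ is bounded by $2^{1-k}\vert\psi\vert_\infty \le C\,2^{-k}$, so once more by Plancherel
$$\vert\psi_k\,\partial_\xi\widehat\varphi\vert_2 = \Big\vert\tfrac{\psi_k(\xi)}{\xi}\big(\widehat\varphi + \widehat{x\partial_x\varphi}\big)\Big\vert_2 \le C\,2^{-k}\big(\vert\varphi\vert_2 + \vert x\partial_x\varphi\vert_2\big).$$
Summing the two estimates gives the claimed bound.

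The only genuinely nontrivial point — the one I would flag as the crux — is that the naive estimate $\vert\psi_k\partial_\xi\widehat\varphi\vert_2 \le \vert\psi_k\vert_\infty\,\vert x\varphi\vert_2$ loses the decaying factor $2^{-k}$: it is precisely the frequency localization of $\psi_k$ away from $\xi=0$ that allows one to divide by $\xi$ and thereby recover the gain, at the cost of replacing $x\varphi$ by $\partial_x(x\varphi) = \varphi + x\partial_x\varphi$, which is controlled by $\vert\varphi\vert_2 + \vert x\partial_x\varphi\vert_2$. Everything else is a routine use of Plancherel's theorem, all constants being harmless multiples of $\vert\psi\vert_\infty$ and $\vert\psi'\vert_\infty$; and all the manipulations above are legitimate because $\varphi\in\mathcal{S}(\R)$ forces $\widehat\varphi\in\mathcal{S}(\R)$ and $x\partial_x\varphi\in\mathcal{S}(\R)\subset L^2(\R)$.
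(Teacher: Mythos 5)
Your proof is correct: the splitting $\partial_\xi\widehat{P_k\varphi}=\psi_k'\widehat\varphi+\psi_k\partial_\xi\widehat\varphi$, the bound $\vert\psi_k'\vert_\infty\leq C2^{-k}$, and the use of $\xi\partial_\xi\widehat\varphi=-\widehat{\partial_x(x\varphi)}$ together with the support of $\psi_k$ away from $\xi=0$ give exactly the stated estimate, with constants depending only on $\psi$ and $\psi'$. The paper itself does not reproduce a proof of this lemma (it refers to \cite{aynur}), and your argument is precisely the standard one used there, so there is nothing to add.
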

\begin{lemma}
Let $k\in\mathbb{Z}$ and $P_k$ defined by \eqref{c2:defpk}. For all $s>1/2$ one has
$$\vert\widehat{P_k\varphi}\vert_\infty \leq C2^{-sk}(\vert\varphi\vert_{H^s}+\vert x\partial_x\varphi\vert_2),$$ for all $\varphi\in\mathcal{S}(\R)$, where $C$ does not depend on $k,s$.\label{c2:aynur2}
\end{lemma}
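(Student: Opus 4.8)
The plan is to reduce the $L^\infty$ bound on $\widehat{P_k\varphi}$ to two $L^2$ estimates — one on $\widehat{P_k\varphi}$ itself and one on its $\xi$-derivative — exploiting that $\widehat{P_k\varphi}=\psi_k\widehat\varphi$ is compactly supported in $\xi$, in the annulus $\{2^{k-1}\le|\xi|\le 2^{k+1}\}$. Concretely, the first step is the elementary one-dimensional inequality $|G|_\infty^2\le 2|G|_2|\partial_\xi G|_2$, valid for any $G$ vanishing at infinity (apply the fundamental theorem of calculus to $|G|^2$ and then Cauchy--Schwarz), applied to $G=\widehat{P_k\varphi}$.

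Next I would estimate the two factors separately. For $|G|_2=|P_k\varphi|_2$ (Plancherel), I would use the second estimate of Bernstein's Lemma \ref{c2:bernstein} with $p=2$, namely $|P_k\varphi|_2\le C2^{-sk}|(-\Delta)^{s/2}P_k\varphi|_2$, followed by $|(-\Delta)^{s/2}P_k\varphi|_2=\||\xi|^s\psi_k\widehat\varphi\|_2\le C|\varphi|_{H^s}$ (the multiplier $\psi_k$ being bounded uniformly in $k$), which gives $|G|_2\le C2^{-sk}|\varphi|_{H^s}$. For $|\partial_\xi G|_2=|\partial_\xi\widehat{P_k\varphi}|_2$, Lemma \ref{c2:aynur1} applies verbatim and gives $|\partial_\xi G|_2\le C2^{-k}(|\varphi|_2+|x\partial_x\varphi|_2)$. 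Plugging both into the calculus inequality, taking square roots, and using $2ab\le a^2+b^2$ together with $|\varphi|_2\le|\varphi|_{H^s}$, I obtain $|\widehat{P_k\varphi}|_\infty\le C2^{-(s+1)k/2}(|\varphi|_{H^s}+|x\partial_x\varphi|_2)$.

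The last step — and the only real point requiring care — is the dyadic bookkeeping needed to replace the exponent $(s+1)/2$ produced by the interpolation with the claimed exponent $s$. For $k\ge 0$ and $1/2<s\le 1$ one simply notes $(s+1)/2\ge s$, so $2^{-(s+1)k/2}\le 2^{-sk}$, which is exactly the desired bound; for the low frequencies $k\le 0$ I would instead bound $|G|_2=|P_k\varphi|_2\le|\varphi|_2\le|\varphi|_{H^s}$ crudely, which yields $|\widehat{P_k\varphi}|_\infty\le C2^{-k/2}(|\varphi|_{H^s}+|x\partial_x\varphi|_2)$, and here $2^{-k/2}\le 2^{-sk}$ because $k\le 0$ and $s>1/2$. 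I expect this power-matching — reconciling the two decay rates $2^{-sk}$ (from the frequency localization via Bernstein) and $2^{-k}$ (from the weighted bound of Lemma \ref{c2:aynur1}), and checking that their geometric mean still dominates the target rate on the relevant frequency and exponent ranges — to be where the attention is needed; the analytic content is otherwise entirely contained in Lemmas \ref{c2:bernstein} and \ref{c2:aynur1}, already available, together with the one-line calculus inequality.
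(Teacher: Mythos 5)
Your argument is correct where the paper actually needs it, and it follows what is essentially the standard route (the paper itself gives no proof of this lemma and defers to \cite{aynur}): the elementary interpolation inequality $|G|_\infty^2\le 2|G|_2|\partial_\xi G|_2$ applied to $G=\widehat{P_k\varphi}$, together with $|P_k\varphi|_2\le C2^{-sk}|\varphi|_{H^s}$ from frequency localization and the weighted bound of Lemma \ref{c2:aynur1}; your treatment of the low frequencies $k\le 0$ is also correct.

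The one caveat, which you half-acknowledge by writing ``for $k\ge 0$ and $1/2<s\le 1$'', is that your proof yields the rate $2^{-(s+1)k/2}$, which dominates the claimed $2^{-sk}$ only for $s\le 1$; so the statement as printed (all $s>1/2$, constant independent of $s$ and $k$) is not fully established. This is not a removable defect of your argument: the printed statement is in fact false for $s>1$. Indeed, take $\widehat{\varphi}(\xi)=\chi\bigl((\xi-2^{k-1})/\epsilon\bigr)$ with a fixed bump $\chi$, $\chi(0)=1$, and $\epsilon=2^{(1-s)k}$; then $|\widehat{P_k\varphi}|_\infty\ge 1$ while $|\varphi|_{H^s}\sim 2^{sk}\sqrt{\epsilon}$ and $|x\partial_x\varphi|_2\sim 2^{k}\epsilon^{-1/2}$, so that $2^{-sk}(|\varphi|_{H^s}+|x\partial_x\varphi|_2)\sim 2^{(1-s)k/2}\rightarrow 0$ as $k\rightarrow+\infty$ when $s>1$. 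Thus $2^{-(s+1)k/2}$ is the optimal rate and your restriction to $1/2<s\le 1$ is exactly the range in which the lemma is true. Since the paper only invokes the lemma in \eqref{c2:decoupage10} with $s$ eventually set equal to $1$, where your exponent coincides with the claimed one, your proof covers everything that is actually used.
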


\subsubsection{Some results on oscillatory integrals }
We invoke later in this paper the following Van der Corput Lemma, which is a refinement of the stationary phase lemma:
\begin{lemma}\label{c2:vandercorput}
Let $\Phi\in C^k(\R)$, and $a<b$ be such that, either:
\begin{enumerate}[label = ( \arabic*)]
\item\quad  $\forall x\in[a;b],\qquad \vert \Phi^{(k)}(x)\vert \geq 1$ if $k>1$; 
\item\quad  $\forall x\in[a;b],\qquad\vert\Phi'(x)\vert \geq 1$ and $\Phi'$ is monotonic. 
\end{enumerate}
Then, there exists $C>0$ which only depends on $k$ such that $$\forall t>0,\qquad \vert\int_a^b e^{it\Phi(\xi)}d\xi \vert  \leq \frac{C}{t^{1/k}}.$$
\end{lemma}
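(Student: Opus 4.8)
The plan is to prove the two cases separately, starting with the first-derivative case (2) since the higher-order case (1) will be reduced to it by induction on $k$. For case (2), the idea is the classical one: since $\Phi'$ does not vanish on $[a;b]$, I would write $e^{it\Phi(\xi)} = \frac{1}{it\Phi'(\xi)}\frac{d}{d\xi}\bigl(e^{it\Phi(\xi)}\bigr)$ and integrate by parts, obtaining
$$\int_a^b e^{it\Phi(\xi)}\,d\xi = \Bigl[\frac{e^{it\Phi(\xi)}}{it\Phi'(\xi)}\Bigr]_a^b + \frac{1}{it}\int_a^b e^{it\Phi(\xi)}\frac{\Phi''(\xi)}{\Phi'(\xi)^2}\,d\xi.$$
The boundary term is bounded by $\frac{2}{t}$ using $|\Phi'|\ge 1$. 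For the remaining integral, the key observation is that $\frac{\Phi''}{(\Phi')^2} = -\bigl(\frac{1}{\Phi'}\bigr)'$ has constant sign because $\Phi'$ is monotonic, so $\int_a^b \bigl|\frac{\Phi''}{(\Phi')^2}\bigr|\,d\xi = \bigl|\frac{1}{\Phi'(b)} - \frac{1}{\Phi'(a)}\bigr| \le 2$; hence that term is also bounded by $\frac{2}{t}$, giving the result with $C$ absolute (here $1/k = 1$).

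For case (1), I would argue by induction on $k\ge 2$. Suppose $|\Phi^{(k)}|\ge 1$ on $[a;b]$. Since $\Phi^{(k-1)}$ is monotonic (its derivative $\Phi^{(k)}$ has constant sign — one should note $\Phi^{(k)}$ is continuous and nonvanishing, hence of one sign), it has at most one zero $c\in[a;b]$. Away from a neighbourhood $(c-\delta, c+\delta)$ of that zero one has $|\Phi^{(k-1)}|\ge \delta$ (using $|\Phi^{(k)}|\ge 1$ and the mean value theorem), so on each such interval $\Phi^{(k-1)}/\delta$ satisfies the hypothesis of the base case for $k=2$ after renormalizing, or more directly one applies the inductive hypothesis at order $k-1$ to the phase on those subintervals, picking up a factor $(\delta t)^{-1/(k-1)}$; the contribution of the middle interval $(c-\delta,c+\delta)$ is trivially bounded by its length $2\delta$. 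Optimizing the split by choosing $\delta \sim t^{-1/k}$ balances the two contributions and yields the bound $C_k\, t^{-1/k}$. If $\Phi^{(k-1)}$ has no zero on $[a;b]$ the estimate is only easier. The constant produced this way depends only on $k$, as claimed.

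The main obstacle is the bookkeeping in the inductive step of case (1): one must handle the possibility that the zero $c$ of $\Phi^{(k-1)}$ lies near an endpoint (so that one of the two flanking intervals is empty or short), verify that the inductive hypothesis genuinely applies — i.e. that on $[a, c-\delta]$ and $[c+\delta, b]$ the rescaled phase $\Phi$ still has $|\Phi^{(k-1)}|\ge \delta$ so that $(\delta^{-1}\Phi)^{(k-1)}$ has absolute value $\ge 1$ — and then track how the constant $C_{k-1}$ feeds into $C_k$. None of this is deep, but it is the step where care is needed; everything else is a routine integration by parts.
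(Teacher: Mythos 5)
Your proposal is correct: it is the classical van der Corput argument (integration by parts using $|\Phi'|\ge 1$ and monotonicity of $1/\Phi'$ for the first-derivative case, then induction on $k$ by excising a $\delta$-neighbourhood of the zero of $\Phi^{(k-1)}$, rescaling the phase by $\delta$, and optimizing $\delta\sim t^{-1/k}$), and the bounds it produces are indeed uniform in $a$ and $b$ as the lemma requires. The paper itself gives no proof of this lemma --- it invokes it as a standard result --- so there is nothing to compare beyond noting that your argument coincides with the standard one in the literature; the only points needing the care you already flagged are that the base case of the induction uses hypothesis (2) (monotonicity of $\Phi'$, which holds on the flanking subintervals when $k=2$ since $\Phi''$ has constant sign) and the endpoint cases where the zero of $\Phi^{(k-1)}$ is absent or near $a$ or $b$.
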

Note that in the above Lemma, $C$ does not depend on $a$ nor $b$.

\subsection{Proof of the main result}
We prove in this section the following dispersive estimate for the linearized Water-Waves equations in dimension $1$:
\begin{theorem}
Let \begin{equation*}\omega : \left\{ \begin{aligned}\R &\longrightarrow\R \\
\xi &\longmapsto \sqrt{\frac{\vert\xi\vert\tanh(\sqrt{\mu}\vert\xi\vert)}{\sqrt{\mu}}}.
\end{aligned}\right. \end{equation*}
Then, there exists $C>0$ independent on $\mu$ such that, for all $\mu>0$:
$$\forall t>0,\qquad \forall \varphi\in\mathcal{S}(\R)\qquad \vert e^{it\omega(D)}\varphi\vert_\infty \leq C(\frac{1}{\mu^{1/4}}\frac{1}{(1+t/\sqrt{\mu})^{1/8}}+\frac{1}{(1+t/\sqrt{\mu})^{1/2}})(\vert\varphi\vert_{H^1}+\vert x\partial_x\varphi\vert_2).$$ \label{c2:theorem_dispersion}
\end{theorem}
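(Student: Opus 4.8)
The plan is to combine the Littlewood--Paley decomposition with the Van der Corput Lemma \ref{c2:vandercorput}, keeping track at every step of the power of $\mu$ that is produced. Write $e^{it\omega(D)}\varphi=\sum_{k\in\Z}e^{it\omega(D)}P_k\varphi$ with $P_k$ as in \eqref{c2:defpk}, so that for each $x$
\[ e^{it\omega(D)}P_k\varphi(x)=\int_\R e^{i(x\xi+t\omega(\xi))}\,\widehat{P_k\varphi}(\xi)\,d\xi , \]
and estimate each dyadic piece uniformly in $x$ before summing. Two elementary remarks guide the splitting. First, setting $\eta=\sqrt\mu\,\xi$ the phase reads $\tfrac1{\sqrt\mu}\big(\sqrt\mu\,x\eta+t\sqrt{\eta\tanh\eta}\big)$, so the natural time parameter is $t/\sqrt\mu$ and the natural transition frequency is $|\xi|\sim1/\sqrt\mu$: for $|\xi|\gg1/\sqrt\mu$ one has $\omega(\xi)\simeq\mu^{-1/4}|\xi|^{1/2}$ (the infinite-depth, genuinely dispersive regime), while for $|\xi|\ll1/\sqrt\mu$ one has $\omega(\xi)=|\xi|-\tfrac\mu6|\xi|^3+\cdots$, so the operator is wave-like and the only dispersion available comes from the cubic correction. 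Second, for each block we will dispose of two bounds and keep the better one.

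The dispersive bound comes from case (1) of Lemma \ref{c2:vandercorput}. Since $\partial_\xi^2(x\xi+t\omega(\xi))=t\,\omega''(\xi)$ and $\partial_\xi^3(x\xi+t\omega(\xi))=t\,\omega'''(\xi)$ do not depend on $x$, it suffices to lower-bound $|\omega''|$ or $|\omega'''|$ on each dyadic block, which gives an estimate uniform in $x$ with no need to locate the stationary point; this is possible with $\omega''$ away from $\xi=0$ and $\xi=\infty$, and with $\omega'''$ near $\xi=0$ (where $\omega''$ vanishes but $\omega'''(0)=-\mu\neq0$) and near $\xi=\infty$ (where $|\omega''|\sim\mu^{-1/4}|\xi|^{-3/2}$ and $|\omega'''|\sim\mu^{1/4}|\xi|^{-5/2}$). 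Using the amplitude version of Van der Corput, the fact that $\widehat{P_k\varphi}$ is supported in $|\xi|\sim2^k$ and vanishes at the endpoints of its support (so no boundary term appears), Cauchy--Schwarz on that support, and Lemma \ref{c2:aynur1} in the form $\int_{\supp}|\partial_\xi\widehat{P_k\varphi}|\leq C2^{k/2}|\partial_\xi\widehat{P_k\varphi}|_2\leq C2^{-k/2}(|\varphi|_2+|x\partial_x\varphi|_2)$, one obtains for each $k$ a bound
\[ |e^{it\omega(D)}P_k\varphi|_\infty\leq C\,\lambda_k^{-1/j}\,2^{-k/2}\,(|\varphi|_2+|x\partial_x\varphi|_2),\qquad \lambda_k=t\min_{|\xi|\sim2^k}|\omega^{(j)}(\xi)|, \]
with $j\in\{2,3\}$ chosen according to the block, all the $\mu$-dependence being explicit in $\lambda_k$. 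The complementary, non-decaying bound is $|e^{it\omega(D)}P_k\varphi|_\infty\leq|\widehat{P_k\varphi}|_{L^1}$, which Cauchy--Schwarz (or Lemma \ref{c2:aynur2}) turns into $\leq C2^{k/2}|\varphi|_2$ for low frequencies and $\leq C2^{-k/2}|\varphi|_{H^1}$ for high frequencies; these are summable as $k\to-\infty$ and $k\to+\infty$ respectively.

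The estimate is then concluded by taking, for each $k$, the minimum of the dispersive and the trivial bound and summing over $k\in\Z$. Each resulting series is geometric on either side of its crossover index and is therefore controlled by its value at the crossover; optimizing the crossover and re-expressing everything through $t/\sqrt\mu$ produces the two terms of the statement: the $\mu^{-1/4}(1+t/\sqrt\mu)^{-1/8}$ term from the low and transition frequencies, and the $(1+t/\sqrt\mu)^{-1/2}$ term from the bulk frequencies, where $|\omega''|$ is non-degenerate; the ``$+1$'' in the factors $1+t/\sqrt\mu$ is supplied by the trivial bounds in the regime $t\lesssim\sqrt\mu$. I expect the main difficulty to be twofold: (i) carrying the dependence on $\mu$ correctly through Van der Corput and through Lemmas \ref{c2:aynur1}--\ref{c2:aynur2}, which is the whole point of the theorem; and (ii) handling the two degenerate frequency ranges so that the series over $k$ converges with a $\mu$-uniform rate. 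Near $\xi=\infty$ the second derivative of the phase decays polynomially, so the second-order Van der Corput bound is not by itself summable and must be interpolated with the trivial $H^1$-bound; near $\xi=0$, on the light cone $|x|\approx t$, the phase degenerates to an Airy-type phase $(x+t)\xi-\tfrac{t\mu}6\xi^3+\cdots$, which forces the use of the third-order Van der Corput estimate (and hence a $t^{-1/3}$-type rate there rather than $t^{-1/2}$). Balancing these competing contributions against the trivial bounds, uniformly in $\mu$, is the technical heart of the argument.
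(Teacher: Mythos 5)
Your low-frequency analysis is fine, and is in fact a legitimate variant of the paper's argument: the paper splits $\{\vert\xi\vert\le y_0/\sqrt\mu\}$ at a radius $\delta$ and uses $\vert\omega''(\xi)\vert\gtrsim\mu\vert\xi\vert$ with second-order Van der Corput, while your blockwise use of $\omega'''\simeq-\mu$ gives the (even slightly better) rate $(t\mu)^{-1/6}=\mu^{-1/4}(t/\sqrt\mu)^{-1/6}$, which implies the first term of the statement. The genuine gap is in the high-frequency regime $\vert\xi\vert\gtrsim\mu^{-1/2}$, where you explicitly decide not to locate the stationary point and to use only lower bounds on $\omega''$ (or $\omega'''$) on each dyadic block, interpolated with the trivial bound. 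Carry your own scheme out: on a block $\vert\xi\vert\sim2^k$ one has $\vert\omega''\vert\sim\mu^{-1/4}2^{-3k/2}$, so Van der Corput with $j=2$, integration by parts and Lemma \ref{c2:aynur1} give $C\mu^{1/8}2^{k/4}t^{-1/2}(\vert\varphi\vert_2+\vert x\partial_x\varphi\vert_2)$ per block, while the trivial Bernstein bound is $C2^{-k/2}\vert\varphi\vert_{H^1}$; minimizing and summing the two geometric series yields $C\mu^{1/12}t^{-1/3}=C\mu^{-1/12}(t/\sqrt\mu)^{-1/3}$, not the claimed $(1+t/\sqrt\mu)^{-1/2}$ (using $j=3$ is worse there; also note $\vert\omega'''\vert\sim\mu^{-1/4}\vert\xi\vert^{-5/2}$, not $\mu^{1/4}\vert\xi\vert^{-5/2}$). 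This weaker output is not dominated by the right-hand side of the theorem uniformly in $\mu$: for $\mu\ge1$ and, say, $t=\mu^{6/5}$ it exceeds $\mu^{-1/4}(t/\sqrt\mu)^{-1/8}+(t/\sqrt\mu)^{-1/2}$ by a positive power of $\mu$. Since uniformity for all $\mu>0$ (including $\mu\to\infty$, where Bulut's $t^{-1/2}$ estimate must be recovered) is precisely the content of the theorem, your argument stops short of the statement, and your attribution of the $(1+t/\sqrt\mu)^{-1/2}$ term to ``bulk frequencies where $\omega''$ is non-degenerate'' does not survive the summation over $k$.

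The missing idea is the one you discard: at high frequencies the $1/2$ decay cannot be extracted from $\omega''$ alone block by block; one must compare $\vert x/t\vert$ with $\vert\omega'(\xi)\vert\sim\mu^{-1/4}\vert\xi\vert^{-1/2}$ on each block. The paper (following Bulut) restricts the Littlewood--Paley sum to $\lambda(t)\le2^k\le\Lambda(t)$, with Bernstein's Lemma \ref{c2:bernstein} handling the tails at rate $(1+t/\sqrt\mu)^{-1/2}$; among the remaining $O(\log t)$ blocks, all but $O(1+\vert\log\mu\vert)$ have $\vert x/t+\omega'\vert\gtrsim\mu^{-1/4}2^{-k/2}$, so the non-stationary first-order Van der Corput estimate gives $O(\mu^{1/4}t^{-1})$ per block and $O(\mu^{1/4}t^{-1}\log t)\le C\mu^{1/4}t^{-1/2}$ in total; on the few possibly stationary blocks one splits off a $\delta$-neighbourhood of the stationary point, bounded by $2\delta\vert\widehat{P_k\varphi}\vert_\infty\lesssim\delta 2^{-k}(\vert\varphi\vert_{H^1}+\vert x\partial_x\varphi\vert_2)$ via Lemma \ref{c2:aynur2} (a pointwise bound, not the $L^1$ bound of the whole block), applies the second-derivative argument outside, and optimizes $\delta$ to get $O(\mu^{1/8}t^{-1/2})$ per block. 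Without this stationary-point localization your proof cannot produce the $(1+t/\sqrt\mu)^{-1/2}$ term of the estimate.
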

Note that the dependence of $\mu$ in the dispersive estimate has been precisely mentioned. This is crucial in view of using a decay estimate for the Water-Waves equations, since the properties of the solutions, and even the dispersive properties of the problem may completely vary with respect to the size of the shallowness parameter $\mu$, as one should see by studying for example the asymptotic regimes when $\mu$ goes to zero. See for instance the Chapter 5 on shallow water models in \cite{david}. Notice that, if one sets $\lambda = \frac{t}{\sqrt{\mu}}$ in the statement of Theorem \ref{c2:theorem_dispersion}, one recovers the result by Aynur Bulut \eqref{c2:aynur_estimate} when $\mu$ goes to $+\infty$: indeed, one writes 
$e^{it\omega(D)} = e^{i\frac{t}{\sqrt{\mu}}g(\sqrt{\mu}D)}$ with $g(x)\underset{x\rightarrow+\infty}{\longrightarrow} \sqrt{\vert x\vert }$. Moreover, this result does not need any assumption on the size of $\mu$ (while $\mu\leq \mu_0$ is a common assumption in the Water-Waves results). \par\vspace{\baselineskip}

In \cite{melinand}, a similar decay as one given by Theorem \ref{c2:theorem_dispersion} is proved but only for functions $\varphi$ such that $\widehat{\varphi}(0)=0$ and with $L^1$ and $H^2$ weighted spaces, which are less convenient for practical use than $H^1$ space and $L^2$-weighted space. However, a short adaptation of the proof of \cite{melinand} shows that a decay of order $\frac{1}{t^{1/3}}$ can be obtained if $\widehat{\varphi}(0)\neq 0$, which is a better decay than one of Theorem \ref{c2:theorem_dispersion}. We can however adapt the proof of Theorem \ref{c2:theorem_dispersion}, and still get some better estimates than \cite{melinand} (without any assumption on $\widehat{\varphi}(0)$) that we also prove in this paper:
\begin{theorem}\label{c2:theorembis}
With the notations of Theorem \ref{c2:theorem_dispersion}, the following estimates hold:\begin{enumerate}[itemsep=0pt]
\item There exists $C>0$ independent on $\mu$ such that, for all $\mu>0$:
\begin{align*}\forall t>0,\quad \forall \varphi\in\mathcal{S}(\R)\quad \vert e^{it\omega(D)}\varphi\vert_\infty &\leq C(\frac{1}{\mu^{3/4}}\frac{1}{(1+t/\sqrt{\mu})^{1/3}}\vert \varphi\vert_{L^1}\\&+\frac{1}{(1+t/\sqrt{\mu})^{1/2}}(\vert\varphi\vert_{H^1}+\vert x\partial_x\varphi\vert_2)).\end{align*}
\item There exists $C>0$ independent on $\mu$ such that, for all $\mu>0$:
\begin{align*}\forall t>0,\quad \forall \varphi\in\mathcal{S}(\R)\quad \vert e^{it\omega(D)}\varphi\vert_\infty &\leq C(\frac{1}{\mu^{3/4}}\frac{1}{(1+t/\sqrt{\mu})^{1/3}}\vert x\varphi\vert_{L^2}\\&+\frac{1}{(1+t/\sqrt{\mu})^{1/2}}(\vert\varphi\vert_{H^1}+\vert x\partial_x\varphi\vert_2)).\end{align*}
\end{enumerate}

\end{theorem}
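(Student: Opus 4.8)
The plan is to follow the same Littlewood--Paley decomposition and Van der Corput analysis used for Theorem \ref{c2:theorem_dispersion}, changing only the way the dyadic pieces are summed so as to trade the costly $\mu^{-1/4}(1+t/\sqrt\mu)^{-1/8}$ term for an $L^1$- (resp. $\vert x\varphi\vert_{L^2}$-) controlled term decaying like $(1+t/\sqrt\mu)^{-1/3}$. First I would write $e^{it\omega(D)}\varphi = \sum_{k\in\Z} e^{it\omega(D)}P_k\varphi$ and split the sum at the frequency scale $2^{k}\sim 1/\sqrt\mu$ (equivalently $\sqrt\mu\,2^k\sim 1$), which is exactly the threshold separating the ``wave-like'' low-frequency regime $\omega(\xi)\approx \mu^{-1/4}\vert\xi\vert^{1/2}\cdot(\text{const})$... more precisely $\omega(\xi)\sim \vert\xi\vert$ for low frequencies, from the ``infinite-depth-like'' regime $\omega(\xi)\sim \mu^{-1/4}\vert\xi\vert^{1/2}$ for high frequencies. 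For the high-frequency part ($\sqrt\mu\,2^k\gtrsim 1$) the phase $\Phi(\xi)=\omega(\xi)$ after the rescaling $\lambda=t/\sqrt\mu$, $\xi\mapsto \sqrt\mu\,\xi$ behaves like $\vert\xi\vert^{1/2}$, whose second derivative is non-degenerate, so Van der Corput with $k=2$ gives a gain $\lambda^{-1/2}$ on each dyadic block; combined with Lemma \ref{c2:aynur1}, Lemma \ref{c2:aynur2} and Bernstein (Lemma \ref{c2:bernstein}) this reproduces the $(1+t/\sqrt\mu)^{-1/2}(\vert\varphi\vert_{H^1}+\vert x\partial_x\varphi\vert_2)$ contribution exactly as in Theorem \ref{c2:theorem_dispersion}, with no need for weighted-$L^1$ control.

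The new input is the treatment of the low-frequency part $\sum_{\sqrt\mu\,2^k\lesssim 1} e^{it\omega(D)}P_k\varphi$. Here $\tanh(\sqrt\mu\vert\xi\vert)\approx \sqrt\mu\vert\xi\vert$, but the crucial point (already emphasized in the introduction, ``it is not the case for the second order derivatives'') is that $\omega''$ does \emph{not} vanish: expanding $\tanh$ to third order one finds $\omega(\xi)^2 = \vert\xi\vert^2 - \tfrac{\mu}{3}\vert\xi\vert^4 + O(\mu^2\vert\xi\vert^6)$, hence on the block $\vert\xi\vert\sim 2^k$ one gets $\vert\omega'''(\xi)\vert \sim \sqrt\mu\, 2^k \cdot (\text{nonzero const})$ after suitable normalization — this is the mechanism producing a genuine dispersive (Airy-type) decay of order $1/3$. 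Concretely, on the $k$-th block I would apply Van der Corput with $k=3$ to $\int e^{it\omega(\xi)}\psi_k(\xi)\,d\xi$, rescaling $\xi = 2^k\eta$ so that the phase becomes $t\,2^k\omega(2^k\eta)/(2^k)$... after extracting the natural amplitude $\sqrt\mu\,2^k$ from the third derivative, the oscillatory factor is bounded by $C\big(t\sqrt\mu\,2^{3k}\cdot \tfrac{1}{2^{2k}}\big)^{-1/3}\cdot 2^k = C\,2^{k}(t\sqrt\mu\,2^{k})^{-1/3}$, and then using $\vert e^{it\omega(D)}P_k\varphi\vert_\infty \le C\,2^{k}(t\sqrt\mu 2^k)^{-1/3}\vert\widehat{P_k\varphi}\vert_{L^1}$ together with $\vert\widehat{P_k\varphi}\vert_{L^1}\le C\,2^{k/2}\vert\widehat{P_k\varphi}\vert_{L^2}\le C\,2^{k/2}\vert P_k\varphi\vert_{L^2}$ — no, rather the cleaner route is $\vert e^{it\omega(D)}P_k\varphi\vert_\infty \le \vert\widehat{e^{it\omega(D)}P_k\varphi}\vert_{L^1}$ is not quite it; instead one bounds $\vert e^{it\omega(D)}P_k\varphi\vert_\infty \le \vert K_k * \varphi\vert_\infty \le \vert K_k\vert_\infty\,\vert\varphi\vert_{L^1}$ where $K_k$ is the kernel $\int e^{i(x\xi+t\omega(\xi))}\psi_k(\xi)d\xi$, estimated by Van der Corput. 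Summing the resulting bound $C\,2^{k}(1+t\sqrt\mu\,2^{k})^{-1/3}\vert\varphi\vert_{L^1}$ over $\sqrt\mu\,2^k\lesssim 1$: in the regime $t/\sqrt\mu \le 1$ one just bounds the geometric sum of $2^k$ up to $1/\sqrt\mu$ by $C/\sqrt\mu$... the book-keeping has to produce exactly the claimed prefactor $\mu^{-3/4}(1+t/\sqrt\mu)^{-1/3}$, which I expect comes out by optimizing/splitting the $k$-sum at the scale $2^k \sim \sqrt\mu/t$ (when that scale is $\lesssim 1/\sqrt\mu$) and otherwise summing fully. For part (2), the only change is to replace $\vert\varphi\vert_{L^1}$ by $\vert x\varphi\vert_{L^2}$: one writes $\vert\widehat{P_k\varphi}\vert_{L^1} \le \vert\psi_k\vert_{L^2}\,\vert\widehat{P_k\varphi}\vert_{L^2}$ is too lossy; the right tool is Cauchy--Schwarz in the form $\vert\widehat{P_k\varphi}\vert_{L^1}\le C\,2^{k/2}\big(\vert\widehat\varphi\vert_{L^2(\mathrm{supp}\,\psi_k)}\big)$ combined with the fact that on a dyadic annulus of size $2^k$ one has $\vert\widehat\varphi\vert_{L^2} \lesssim \vert\widehat{\psi_k\widehat\varphi}\vert$... and then $\vert\partial_\xi\widehat\varphi\vert_{L^2}=\vert x\varphi\vert_{L^2}$ lets one convert the low-frequency $L^2$-annulus mass into the weighted norm, essentially because $\widehat\varphi$ near $0$ is controlled by its derivative.

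The main obstacle, as in all such arguments, is the delicate bookkeeping of the $\mu$-dependence: one must track how the rescaling $\lambda = t/\sqrt\mu$, $\xi\mapsto\sqrt\mu\,\xi$ interacts with the annulus sizes, make sure the Van der Corput constants are uniform in $k$ and in $\mu$ (which they are, by the remark after Lemma \ref{c2:vandercorput} that $C$ is independent of the endpoints), verify that $\omega'''$ stays comparable to $\sqrt\mu\,2^k$ with a constant bounded below uniformly over the low-frequency range (this needs the quantitative lower bound $\vert \tfrac{d^3}{d\eta^3}[\eta\tanh(s\eta)]\vert \gtrsim s$ for $s=\sqrt\mu 2^k$ bounded, on the support of $\psi_1$, which is an elementary but necessary one-variable estimate), and finally check that the $\mathbb Z$-sum over dyadic scales converges and produces precisely the prefactors $\mu^{-3/4}(1+t/\sqrt\mu)^{-1/3}$ and $(1+t/\sqrt\mu)^{-1/2}$ claimed, with no hidden logarithmic loss at the transition frequency $\sqrt\mu 2^k\sim 1$ — which is why one interpolates the two Van der Corput bounds (order $1/3$ from below, order $1/2$ from above) rather than using a sharp cutoff. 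The high-frequency half is essentially identical to the proof of Theorem \ref{c2:theorem_dispersion} and can be quoted; all the real work is the low-frequency Airy-type estimate and the $\mu$-uniform summation.
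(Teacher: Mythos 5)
Your high-level architecture is the same as the paper's (keep the high-frequency analysis of Theorem \ref{c2:theorem_dispersion} verbatim for the $(1+t/\sqrt{\mu})^{-1/2}$ term, and improve the low-frequency term via a third-derivative Van der Corput estimate), but the low-frequency implementation contains a concrete error. Since $\omega(\xi)=\frac{1}{\sqrt{\mu}}g(\sqrt{\mu}\xi)$, one has $\omega'''(\xi)=\mu\, g'''(\sqrt{\mu}\xi)$ and $g'''(\xi)\to \mp 1$ as $\xi\to 0^{\pm}$, so on the whole low-frequency region $\vert\xi\vert\le y_0/\sqrt{\mu}$ (on each half-line) the third derivative satisfies $\vert\omega'''\vert\sim \mu$, uniformly in $\xi$ — not $\vert\omega'''\vert\sim\sqrt{\mu}\,2^{k}$ on the block $\vert\xi\vert\sim 2^{k}$ as you claim. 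For $\sqrt{\mu}\le 2^{k}\le y_0/\sqrt{\mu}$ your asserted lower bound $\sqrt{\mu}\,2^{k}$ exceeds the true size $\mu$, so Van der Corput (Lemma \ref{c2:vandercorput}) cannot be invoked with it and the per-block gain $2^{k}(t\sqrt{\mu}\,2^{k})^{-1/3}$ is unjustified. Worse, once the derivative size is corrected, the per-block Van der Corput bound becomes $C(\mu t)^{-1/3}$, independent of $k$: your dyadic sum then either diverges (as $k\to-\infty$, unless you interpolate with the trivial bound $2^{k}$) and in any case picks up a logarithmic factor from the $O(\log)$ blocks between the crossover scale and $y_0/\sqrt{\mu}$, which the stated theorem does not allow; the sentence about ``interpolating the two Van der Corput bounds'' does not supply the missing mechanism.

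The paper avoids all of this by performing \emph{no} Littlewood--Paley decomposition at low frequencies: it keeps the splitting \eqref{c2:decoupage01} and applies Van der Corput of order $3$ directly on the whole interval $\vert\xi\vert\le y_0/\sqrt{\mu}$, using $\vert\Phi'''\vert=\vert\omega'''\vert\ge c\mu$ there, to get $\sup_{x}\vert\int_{\vert\xi\vert\le y_0/\sqrt{\mu}}e^{i(t\omega(\xi)+x\xi)}\,d\xi\vert\le C(\mu t)^{-1/3}$. Part (1) is then immediate from the convolution-kernel bound $\vert K_t\ast\varphi\vert_\infty\le\vert K_t\vert_\infty\vert\varphi\vert_{L^1}$, with no summation and hence no possible log loss. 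For part (2) the paper integrates by parts in $\xi$ (writing $e^{it\Phi}$ as the derivative of its primitive $\int_0^{\xi}e^{it\Phi(s)}ds$), applies Cauchy--Schwarz over the interval of length $\sim y_0/\sqrt{\mu}$ — this is exactly where the extra $\mu^{-1/4}$ in the prefactor comes from — and uses the identity $\vert\partial_\xi\widehat{\varphi}\vert_2=\vert x\varphi\vert_2$ together with the same $(\mu t)^{-1/3}$ bound on the primitive. Your discussion of part (2) circles around this (you do mention $\vert\partial_\xi\widehat{\varphi}\vert_{L^2}=\vert x\varphi\vert_{L^2}$) but never lands on the integration-by-parts plus Cauchy--Schwarz step, and instead proposes bounding annulus $L^2$-mass of $\widehat{\varphi}$ by its derivative, which is neither needed nor sufficient as stated. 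To repair your proof, drop the dyadic decomposition below $y_0/\sqrt{\mu}$ and argue globally as above.
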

The Theorem \ref{c2:theorembis} gives a better decay than Theorem \ref{c2:theorem_dispersion} for the linear operator of the Water-Waves, however its use in view of long time results for the full Water-Waves equation \eqref{c2:ww_equation1} may require to prove that solutions are bounded in $L^1$ norm, or in $\vert x\cdot\vert_{L^2}$ norm, which is difficult. Indeed, the proof of local existence for this equation in weighted spaces requires the control of the commutator $[\G,x]$, which is difficult to get (while the one for $[\G,x]\partial_x$ is less difficult to get, see later Section \ref{c2:commut_section}).\vspace{\baselineskip}

The proof is based on a stationary phase result and a use of the Littlewood decomposition. More precisely, the control of oscillatory integrals of the form $$\int_a^b e^{it\Phi(\xi)}d\xi$$ consists in the precise study of where the phase $\Phi$ may be stationary. Here, the derivative of the phase \begin{equation}\Phi(\xi) = t(\omega(\xi)+x\xi/t)\label{c2:defPhi}\end{equation} may vanish and we are therefore led to study the behaviour of the second derivative $\Phi''$. However, $\vert\Phi''(\xi)\vert\underset{\xi\rightarrow\infty}{\sim} \frac{C}{\vert\xi\vert^{3/2}}$ and therefore $\Phi''$ cannot be bounded from below by a constant and one cannot apply Van der Corput's Lemma \ref{c2:vandercorput}. We therefore need to compensate the bad bound of $\Phi''$ with good weighted estimates on $\widehat{\varphi}$.  
\begin{proof}
The result is easy to get for $\vert t/\sqrt{\mu}\vert \leq 1$ by using the continuous injection $H^1(\R)\subset L^\infty(\R)$, and therefore we assume that $t>\sqrt{\mu}$ (the case $t<-\sqrt{\mu}$ is similar). Let fix $x\in\R$ with $x\neq 0$. In all this proof, we will denote by $C$ any constant which does not depend on $\mu, x, t,k$. As explained above, the derivative of the phase, $\Phi'$, may vanish and therefore one needs a close study of the second derivative $\Phi'' = \omega''$.  Note that $\omega(\xi) = \frac{1}{\sqrt{\mu}}g(\sqrt{\mu}\xi)$ with $g(\xi) = \sqrt{\vert\xi\vert\tanh(\vert\xi\vert)}$. It is easy to show that \begin{equation}\vert g''(\xi)\vert\underset{\xi\rightarrow 0}{\sim}\vert\xi\vert,\qquad  \vert g''(\xi)\vert\underset{\xi\rightarrow +\infty}{\sim}\vert\xi\vert^{-3/2}.\label{c2:comportementg}\end{equation} As suggested by the behaviour of $\omega''$, we split the study of the linear operator $e^{it\omega(D)}$ into low and high frequencies cases. To this purpose, one can define $\chi$ a smooth compactly supported function equals to $1$ in the neighbourhood of the origin, and write $\varphi = \chi(D)\varphi + (1-\chi(D))\varphi$, with as usual $\widehat{\chi(D)\varphi}(\xi) = \chi(\xi)\widehat{\varphi}(\xi)$ for all $\varphi\in\S(\R)$. Note that the estimate of Theorem \ref{c2:theorem_dispersion} stay true if we prove it for $\chi(D)(\sqrt\mu\D)\varphi$ or $(1-\chi(\sqrt\mu\D))\varphi$ instead of $\varphi$. \par\vspace{\baselineskip}
We write:
\begin{align}
e^{it\omega(D)}\varphi &= \int_\R e^{i(t\omega(\xi)+x\xi)} \widehat{\varphi}(\xi)d\xi \nonumber\\
&= \int_{\vert\xi\vert\leq y_0/\sqrt{\mu}}e^{i(t\omega(\xi)+x\xi)} \widehat{\varphi}(\xi)d\xi + \int_{\vert\xi\vert >y_0/\sqrt{\mu}} e^{i(t\omega(\xi)+x\xi)} \widehat{\varphi}(\xi)d\xi. \label{c2:decoupage01}
\end{align}
Taking the last remark into account, we first assume that $\widehat{\varphi}$ is compactly supported in some $[0;\frac{y_0}{\sqrt{\mu}}[$ for some $y_0>0$. In this case, we only need to  control the first term of \eqref{c2:decoupage01}. One sets $\delta>0$ and splits the integral into two parts (recall that the phase $\Phi$ is defined by \eqref{c2:defPhi}):
\begin{equation}
 \vert \int_{\vert\xi\vert\leq \frac{y_0}{\sqrt{\mu}}}e^{it\Phi(\xi)} \widehat{\varphi}(\xi)d\xi\vert \leq \vert \int_{\vert\xi\vert\leq\delta}e^{it\Phi(\xi)} \widehat{\varphi}(\xi)d\xi\vert +\vert \int_{\delta\leq \vert\xi\vert\leq \frac{y_0}{\sqrt{\mu}}}e^{it\Phi(\xi)} \widehat{\varphi}(\xi)d\xi\vert. \label{c2:split01}\end{equation}
 We now use Cauchy-Schwarz inequality to control the first integral of the right hand side of \eqref{c2:split01}:
 \begin{align}
 \vert \int_{\vert\xi\vert\leq\delta}e^{it\Phi(\xi)} \widehat{\varphi}(\xi)d\xi\vert  &\leq \sqrt{2\delta}\vert\varphi\vert_2.\label{c2:split04}
 \end{align}
For the second integral of the right hand side of \eqref{c2:split01}, we only consider the integral over $[\delta;y_0/\sqrt{\mu}]$ where $\Phi$ is smooth (the integral over $[-y_0/\sqrt{\mu};-\delta[$ is controlled by the exact same technique using the symmetry of $\Phi''$). We integrate by parts in the second integral of the right hand side of \eqref{c2:split01} (remember that $\widehat{\varphi}$ is compactly supported in $[0;\frac{y_0}{\sqrt{\mu}}[$):
 \begin{align*}
\vert \int_{\delta\leq \xi\leq \frac{y_0}{\sqrt{\mu}}}e^{it\Phi(\xi)} \widehat{\varphi}(\xi)d\xi\vert  &\leq \vert -\int_{\delta\leq \xi\leq \frac{y_0}{\sqrt{\mu}}} \int_\delta^\xi e^{it\Phi(s)}ds\frac{d}{d\xi} \widehat{\varphi}(\xi)d\xi \vert. 
\end{align*}
 Using Cauchy-Schwarz inequality, one gets:
\begin{align}
\vert \int_{\delta\leq \xi\leq \frac{y_0}{\sqrt{\mu}}}e^{it\Phi(\xi)} \widehat{\varphi}(\xi)d\xi\vert &\leq  \frac{\sqrt{y_0}}{\mu^{1/4}}\underset{\delta\leq\xi\leq\frac{y_0}{\sqrt{\mu}}}{\sup}\vert \int_\delta^\xi e^{it\Phi(s)}ds\vert \times \vert 1_{\delta\leq\vert\xi\vert}\frac{1}{\xi}(\xi\frac{d}{d\xi}\widehat{\varphi})(\xi)\vert_2 \nonumber \\
&\leq \frac{C}{\delta\mu^{1/4}} \underset{\delta\leq\xi\leq\frac{y_0}{\sqrt{\mu}}}{\sup}\vert \int_\delta^\xi e^{it\Phi(s)}ds\vert (\vert\varphi\vert_2+ \vert x\partial_x\varphi\vert_2).\label{c2:split02}
\end{align}
We now give a control of the oscillatory integral of  \eqref{c2:split02}. Recall that $\omega = \frac{1}{\sqrt{\mu}}g(\sqrt{\mu}\xi)$ with \eqref{c2:comportementg}. Therefore, there exists $C>0$ independent on $\mu$ such that $\vert\Phi''(s)\vert \geq C\mu \vert\xi\vert$ on $[0;y_0/\sqrt{\mu}]$ (see also Figure \ref{c2:figure}). Therefore, one has: $$\forall\delta\leq s\leq\frac{y_0}{\sqrt{\mu}},\qquad \vert \Phi''(s)\vert \geq \mu s\geq \mu \delta.$$ Using Van der Corput's Lemma \ref{c2:vandercorput}, one gets  the control:
\begin{equation}
\underset{\delta\leq\xi\leq\frac{y_0}{\sqrt{\mu}}}{\sup}\vert \int_\delta^\xi e^{it\Phi(s)}ds\vert \leq \frac{C}{\sqrt{\mu\delta t}}. \label{c2:bound1}
\end{equation}
Putting together \eqref{c2:bound1} into \eqref{c2:split02}, one gets:
\begin{equation}
\vert \int_{\delta\leq \xi\leq \frac{y_0}{\sqrt{\mu}}}e^{it\Phi(\xi)} \widehat{\varphi}(\xi)d\xi\vert \leq  \frac{C}{\delta\mu^{1/4}}\frac{1}{\sqrt{\mu\delta t}} (\vert\varphi\vert_2+ \vert x\partial_x\varphi\vert_2). \label{c2:split03}
\end{equation}
Combining the estimates of \eqref{c2:split04} and \eqref{c2:split03}, one obtains:
$$\vert \int_{\vert\xi\vert\leq \frac{y_0}{\sqrt{\mu}}}e^{it\Phi(\xi)} \widehat{\varphi}(\xi)d\xi\vert \leq (\sqrt{2\delta} +\frac{C}{\delta\mu^{1/4}}\frac{1}{\sqrt{\mu\delta t}} )(\vert\varphi\vert_2+ \vert x\partial_x\varphi\vert_2).$$
The above quantity is minimal for $\delta =\frac{1}{\mu^{3/8}t^{1/4}}$ and therefore, we finally get:
\begin{equation}
\vert \int_{\vert\xi\vert\leq \frac{y_0}{\sqrt{\mu}}}e^{it\Phi(\xi)} \widehat{\varphi}(\xi)d\xi\vert  \leq \frac{1}{\mu^{3/16}}{t^{1/8}}.\label{c2:conclusion1}
\end{equation}
\par\vspace{\baselineskip}

\begin{figure}[!h]
    \center
    \includegraphics[scale=.5]{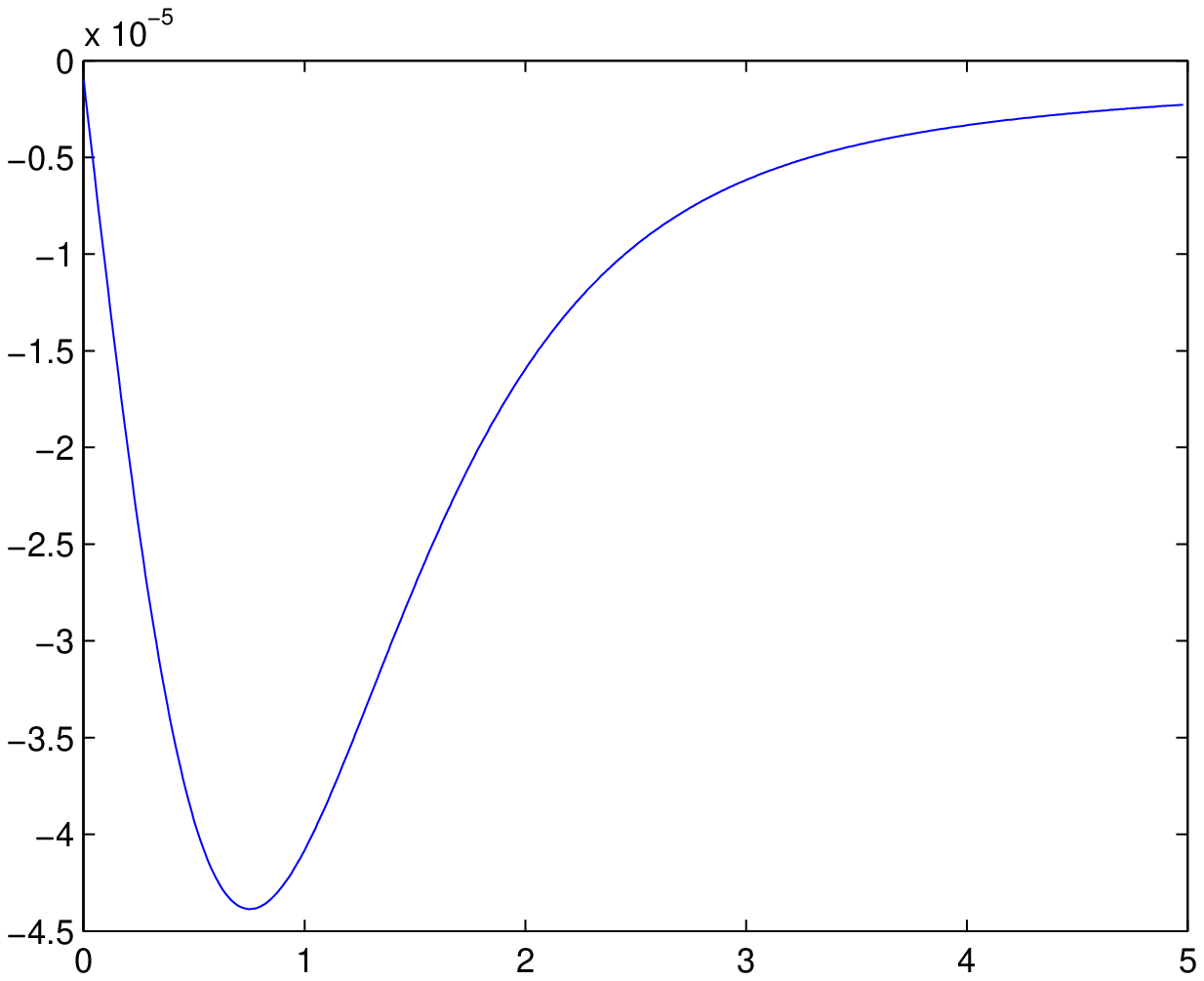}
    \caption{Graph of $\Phi''$}\label{c2:figure}
\end{figure}

We now assume that $\widehat{\varphi}$ has its support in $[\frac{y_0}{\sqrt{\mu}};+\infty[$. In this case, we only need to focus on the second term of the right hand side of \eqref{c2:decoupage01}. We are led to control in $L^\infty$ norm the quantity $\int_\R e^{it\Phi(\xi)}\widehat{\varphi}(\xi)d\xi$ where $\supp\widehat{\varphi}\subset[\frac{y_0}{\sqrt{\mu}};+\infty[$. The following lines are an adaptation of \cite{aynur}. Using the Littlewood-Paley decomposition, we split $e^{it\omega(D)}\varphi$ into
\begin{align*}
e^{it\omega(D)}\varphi &= \sum_{\substack{k\in\Z \\ 2^k\leq \lambda(t)}} P_ke^{it\omega(D)}\varphi + \sum_{\lambda(t)\leq 2^k\leq  \Lambda(t)} P_ke^{it\omega(D)}\varphi+\sum_{2^k\geq \Lambda(t)} P_ke^{it\omega(D)}\varphi
\end{align*}
where $$\lambda(t)=C_1(1+\vert t/\sqrt{\mu}\vert)^{-1},\qquad \Lambda(t) = \frac{1}{C_1}(1+\vert  t/\sqrt{\mu}\vert),$$ with $C_1>1$.  We therefore have $$\vert (e^{it\omega(D)}\varphi )(x) \vert \leq S_1+S_2+S_3,$$ with:
\begin{align*}
S_1 &= \sum_{2^k\leq \lambda(t)}\vert P_ke^{it\omega(D)}\varphi(x)\vert,\\
S_2&= \sum_{\lambda(t)\leq 2^k\leq \Lambda(t)}\vert P_ke^{it\omega(D)}\varphi(x)\vert,\\
S_3&= \sum_{2^k\geq \Lambda(t)}\vert P_ke^{it\omega(D)}\varphi(x)\vert.
\end{align*}
The term $S_1$ is controlled by using Bernstein's Lemma  \ref{c2:bernstein}:
\begin{align}
S_1&\leq \sum_{2^k\leq \lambda(t)}\vert P_ke^{it\omega(D)}\varphi(x)\vert_\infty \nonumber \\
&\leq C \sum_{2^k\leq \lambda(t)}2^{k/2}\vert P_ke^{it\omega(D)}\varphi(x) \nonumber \vert_2 \\
&\leq C \lambda(t)^{1/2}\vert \varphi(x)\vert_2 \nonumber \\
&\leq C (1+\vert  t/\sqrt{\mu}\vert)^{-1/2} \vert\varphi\vert_2. \label{c2:conclusion2}
\end{align}
Using again Bernstein's Lemma  \ref{c2:bernstein}, one gets the control of $S_3$:
\begin{align}
S_3&\leq \sum_{2^k\geq \Lambda(t)}\vert P_ke^{it\omega(D)}\varphi(x)\vert_\infty \nonumber \\
&\leq \sum_{2^k\geq \Lambda(t)}2^{k/2}\vert P_ke^{it\omega(D)}\varphi(x)\vert_2 \nonumber \\
&\leq C\sum_{2^k\geq \Lambda(t)}2^{-k/2}\vert P_ke^{it\omega(D)}\varphi(x)\vert_{H^1} \nonumber \\
&\leq C(1+\vert  t/\sqrt{\mu}\vert)^{-1/2}\vert \varphi\vert_{H^1}.\label{c2:conclusion3}
\end{align}
For the control of $S_2$, we need a close study of oscillatory integrals of the form $\int_\R e^{it(x/t\xi+\omega(\xi))}\widehat{P_k\varphi}(\xi)d\xi$. Therefore, we need precise bounds for the oscillatory phase $ \xi\mapsto (x/t) \xi+\omega(\xi)$ and its derivatives. We are led to split the summation set of $S_2$ into three parts:
\begin{align*}
I_1 &= \lbrace k\in\Z,\lambda(t)\leq 2^k\leq \Lambda(t),\quad 2^{k/2}\leq \vert t/x\vert/C_2\rbrace,\\
I_2&= \lbrace k\in\Z,\lambda(t)\leq 2^k\leq \Lambda(t),\quad \vert t/x\vert /C_2\leq 2^{k/2}\leq C_2 \vert t/x\vert\rbrace,\\
I_3&= \lbrace k\in\Z, \lambda(t)\leq 2^k\leq \Lambda(t),\quad 2^{k/2} \geq C_2 \vert t/x\vert\rbrace,
\end{align*}
where $C_2$ has to be set. We therefore set $$S_{2j} = \sum_{I_j} \vert \int_\R e^{i(x\xi+t\omega(\xi))}\widehat{P_k\varphi}(\xi)d\xi\vert,\qquad j=1,2,3.$$ The contributions of $I_1$ and $I_3$ are the most easy to get. One writes, for all $k\in I_1$:
\begin{align*}
\int_\R e^{it\Phi(\xi)}\widehat{P_k\varphi}(\xi)d\xi &= \int_\R \frac{d}{d\xi}(\int_{2^{k-1}}^\xi e^{it\Phi(s)}ds )\widehat{P_k\varphi}(\xi)d\xi \\
&= - \int_\R \int_{2^{k-1}}^\xi e^{it\Phi(s)}ds \frac{d}{d\xi}\widehat{P_k\varphi}(\xi)d\xi
\end{align*}
by integrating by parts, and recalling that $\supp \widehat{P_k\varphi}\subset \supp \psi_k\subset  \lbrace 2^{k-1}\leq\vert\xi\vert\leq 2^{k+1}\rbrace$. We now use Cauchy-Schwarz inequality to get:
\begin{equation}
\vert \int_\R e^{it\Phi(\xi)}\widehat{P_k\varphi}(\xi)d\xi\vert \leq C 2^{k/2}\underset{\xi\in \supp \psi_k\cap \supp \widehat{\varphi}}{\sup} \vert \int_{2^{k-1}}^\xi e^{it\Phi(s)}ds\vert \times \vert\frac{d}{d\xi}\widehat{P_k\varphi}\vert_2. \label{c2:decoupage03}
\end{equation}
We are therefore led to control the oscillatory integral $\int_{2^{k-1}}^\xi e^{it\Phi(s)}ds$ with $\xi\in \supp \psi_k\cap \supp \widehat{\varphi}$. We put this integral under the form $$\int_{2^{k-1}}^\xi e^{it\Phi(s)}ds = \int_{2^{k-1}}^\xi e^{it((x/t) s +\omega(s))}ds$$ and we use the following lower bound for the derivative of the phase:
$$\vert x/t+\omega'(s)\vert \geq \vert\omega'(s)\vert-\vert x/t\vert.$$ Now, recall that $k\in I_1$ and therefore $\vert x/t\vert \leq 2^{-k/2}C_2$. Moreover, recall that $\omega(\xi) = \frac{1}{\sqrt{\mu}}g(\sqrt{\mu}\xi)$ with $\vert g'(\xi)\vert\underset{\xi\rightarrow+\infty}{\sim} \vert\xi\vert^{-1/2}$, and that $\supp\widehat{\varphi}\subset [y_0/\sqrt{\mu};+\infty[$. Therefore,  the derivative of $\omega$ is bounded from below on $\supp \widehat{\varphi}$   by $\frac{C_3}{ \mu^{1/4}}\vert s\vert^{-1/2}$ where $C_3$ is independent on $\mu$. We therefore get:
$$\vert x/t+\omega'(s)\vert \geq \frac{C_3}{ \mu^{1/4}} \vert s\vert^{-1/2}-2^{-k/2}C_2.$$ Now, since $s\in \supp \psi_k$, one has 
\begin{align*}
\vert x/t+\omega'(s)\vert &\geq \frac{C_3}{ \mu^{1/4}}\frac{1}{\sqrt{2}}2^{-k/2} -2^{-k/2}C_2 \\
&\geq \frac{C_3}{ \mu^{1/4}} 2^{-k/2}
\end{align*} provided \begin{equation*}
\frac{C_3}{\mu^{1/4}\sqrt{2}}-C_2\geq C\mu^{1/4}
\end{equation*}
with $C$ independent on $k,\mu,x$. We therefore set \begin{equation}
C_2 = \frac{C_3}{2\sqrt{2}\mu^{1/4}}. \label{c2:setC2}
\end{equation} Now, one can apply the Van der Corput Lemma \ref{c2:vandercorput} and get:
\begin{equation}
 \vert\int_{2^{k-1}}^\xi e^{it((x/t) s +\omega(s))}ds \leq C\vert t\vert^{-1} 2^{k/2} \mu^{1/4}.\label{c2:decoupage04}
\end{equation}
 Using Lemma \ref{c2:aynur1}, one has:
\begin{equation}
\vert \frac{d}{d\xi}\widehat{P_k\varphi}\vert_2 \leq 2^{-k}(\vert\varphi\vert_2+\vert x\partial_x \varphi\vert_2).\label{c2:decoupage05}
\end{equation}
Putting together \eqref{c2:decoupage03}, \eqref{c2:decoupage04} and \eqref{c2:decoupage05}, one finally gets, for all $k\in I_1$:
$$\int_\R e^{it\Phi(\xi)}\widehat{P_k\varphi}(\xi)d\xi \leq \mu^{1/4}\frac{C}{\vert t\vert } (\vert\varphi\vert_2+\vert x\partial_x\varphi\vert_2).$$
We now sum over $k\in I_1$. Since the set has a $O(\log (\vert t\vert)$ number of elements (recall that it is included in $\lbrace \lambda(t)\leq 2^k\leq \Lambda(t)\rbrace$), we get:
\begin{align}
S_{21} &\leq \mu^{1/4}\frac{C}{\vert t\vert }  \sum_{k\in I_1}(\vert\varphi\vert_2+\vert x\partial_x\varphi\vert_2)\nonumber \\
&\leq \frac{C\mu^{1/4}}{\vert t\vert }\log(\vert t\vert) (\vert\varphi\vert_2+\vert x\partial_x\varphi\vert_2)\nonumber \\
&\leq C\mu^{1/4}\vert t\vert^{-1/2}(\vert\varphi\vert_2+\vert x\partial_x\varphi\vert_2)\label{c2:conclusion4}.
\end{align}
The control for $S_{23}$ is similar and therefore we omit it and focus on the most difficult term which is $S_{22}$. One starts to notice that there is a finite number of terms which is of the form $C\log(\mu)$ with $C$ independent on $t,x,\mu$ in the set $I_2$. Indeed, if $k\in I_2$ then one has $$-\log(C_2)+\log(\vert t/x\vert) \leq k \leq \log(C_2)+\log(\vert t/x\vert),$$ and $C_2$ has been set in \eqref{c2:setC2}. Therefore, it suffices to control the integrals $\int_\R e^{it\Phi(\xi)}\widehat{P_k\varphi}(\xi)d\xi$ for $k\in I_2$ by a term of the form $\frac{C}{t^{1/2}}$ with $C$ independent on $x,t,k$. For $k\in I_2$, the derivative of the phase $\Phi$ may vanishes, and one needs to control the second derivative and use a Van der Corput type result. Let $c$ be the minimum of $\vert \Phi'\vert$ on $[2^{k-1};2^{k+1}]$. \par 

\textbf{- 1st case : $\Phi'(c)=0$}
We split the integral into three terms:
\begin{equation}\int_\R e^{it\Phi(\xi)}\widehat{P_k\varphi}(\xi)d\xi = \int_{2^{k-1}}^{c-\delta} e^{it\Phi(\xi)}\widehat{P_k\varphi}(\xi)d\xi+\int_{c-\delta}^{c+\delta} e^{it\Phi(\xi)}\widehat{P_k\varphi}(\xi)d\xi+\int_{c+\delta}^{2^{k+1}} e^{it\Phi(\xi)}\widehat{P_k\varphi}(\xi)d\xi. \label{c2:decoupage06}\end{equation}
The second integral of the right hand side of \eqref{c2:decoupage06} is estimated as follows: 
\begin{align}
\vert \int_{c-\delta}^{c+\delta} e^{it\Phi(\xi)}\widehat{P_k\varphi}(\xi)d\xi \vert &\leq 2\delta \vert \widehat{P_k\varphi}(\xi) \vert_\infty\nonumber \\
&\leq 2\delta 2^{-sk} (\vert\varphi\vert_{H^s}+\vert x\partial_x\varphi\vert_2)\label{c2:decoupage10}
\end{align}
where we used Lemma \ref{c2:aynur2} to derive the last inequality, with $s>1/2$ to be set. We now focus on the control of the first integral of the right hand side of \eqref{c2:decoupage06} (the last integral is controlled by using the same technique). Integrating by parts, one gets:
\begin{align}
\vert \int_{2^{k-1}}^{c-\delta} e^{it\Phi(\xi)}\widehat{P_k\varphi}(\xi)d\xi\vert &= \vert- \int_{2^{k-1}}^{c-\delta} \int_{2^{k-1}}^{\xi} e^{it\Phi(s)}ds\frac{d}{d\xi}(\widehat{P_k\varphi})(\xi)d\xi\vert \nonumber\\
&\leq 2^{k/2}\underset{\xi\in \supp\widehat{\varphi} \cap [2^{k-1};c-\delta]}{\sup} \vert \int_{2^{k-1}}^{\xi} e^{it\Phi(s)}ds\vert  \vert \frac{d}{d\xi} \widehat{P_k\varphi}\vert_2. \label{c2:decoupage07}
\end{align}
One now estimates $\vert \int_{2^{k-1}}^{\xi} e^{it\Phi(s)}ds\vert$ for $\xi\in [2^{k-1};c-\delta]\cap \supp\widehat{\varphi}$. Recall that $\omega(\xi) = \frac{1}{\sqrt{\mu}} g(\sqrt{\mu}\xi)$ with \eqref{c2:comportementg}, and that $\supp\widehat{\varphi}\subset [y_0/\sqrt{\mu};+\infty[$. Therefore, for $\xi\in [2^{k-1};c-\delta]\cap \supp\widehat{\varphi}$, there exists $C>0$ such that $\vert \Phi''(\xi)\vert \geq \frac{C}{\mu^{1/4}\vert \xi\vert^{3/2}}$ and therefore, one has:
\begin{align*}
\vert \Phi'(s)\vert  &= \int_c^s \vert \Phi''(s)\vert ds + \Phi'(c)\\
&\geq \int_c^s \frac{C}{\mu^{1/4}\xi^{3/2}}\\
&\geq C\mu^{-1/4}(\frac{1}{\sqrt{c}}-\frac{1}{\sqrt{s}}) \\
&\geq C\mu^{-1/4}\frac{c-s}{\sqrt{c}\sqrt{s}(\sqrt{c}+\sqrt{s})}
\geq C\mu^{-1/4} \frac{\delta}{2^{3k/2}}
\end{align*}
where we used the fact that $\vert c-s\vert \geq \delta$ for $s\in [2^{k-1};\xi]$ and $\xi\in [2^{k-1};c-\delta]$. Therefore, using Van der Corput's Lemma \ref{c2:vandercorput}, one gets
\begin{equation}\int_{2^{k-1}}^{\xi} e^{it\Phi(s)}ds \leq C \frac{\mu^{1/4}2^{3k/2}}{t\delta}.\label{c2:decoupage08}\end{equation} Using Lemma \ref{c2:aynur1}, one has \begin{equation}\vert \frac{d}{d\xi} \widehat{P_k\varphi}\vert_2 \leq 2^{-k}(\vert \varphi\vert_2+\vert x\partial_x\varphi\vert_2)\label{c2:decoupage09}\end{equation} and putting together \eqref{c2:decoupage08} and \eqref{c2:decoupage09} in \eqref{c2:decoupage07}, one obtains:
\begin{equation}
\vert \int_{2^{k-1}}^{c-\delta} e^{it\Phi(\xi)}\widehat{P_k\varphi}(\xi)d\xi\vert \leq C \frac{\mu^{1/4}2^{3k/2}}{t\delta} 2^{k/2}2^{-k}(\vert \varphi\vert_2+\vert x\partial_x\varphi\vert_2). \label{c2:decoupagealpha}
\end{equation}
Putting together \eqref{c2:decoupagealpha} and \eqref{c2:decoupage10}, one finally obtains:
\begin{equation}
\vert \int_\R e^{it\Phi(\xi)}\widehat{P_k\varphi}(\xi)d\xi \vert \leq  C( \frac{\mu^{1/4}2^{k}}{t\delta} +\delta 2^{-sk})(\vert\varphi\vert_{H^1}+\vert x\partial_x\varphi\vert_2).
\end{equation}
The above right hand side is minimal with respect to $\delta$ if $\delta = 2^{sk/2} \mu^{1/8}\frac{2^{k/2}}{t^{1/2}}$. We therefore set $s=1$ and get the  control:
$$\vert \int_\R e^{it\Phi(\xi)}\widehat{P_k\varphi}(\xi)d\xi \vert \leq \mu^{1/8}\frac{C}{t^{1/2}}.$$ At last, one gets by summation on $k\in I_2$ (recall that there is a $O(\log(\mu))$ number of terms in $I_2$):

\begin{equation}S_{22} \leq \frac{\mu^{1/4}}{t^{1/2}}(\vert\varphi\vert_{H^1}+\vert x\partial_x\varphi\vert_2). \label{c2:conclusion5}\end{equation}
\textbf{-2nd case : $\Phi'(c)\neq 0$}
It is the same technique as above, noticing that in this case $\Phi'$ is monotonic and does not vanish, and therefore $c$ is one of the bounds of the integral. \par\vspace*{\baselineskip}
\textbf{Conclusion : } Putting together \eqref{c2:conclusion1},\eqref{c2:conclusion2},\eqref{c2:conclusion3},\eqref{c2:conclusion4} and \eqref{c2:conclusion5}, and taking the supremum over all $x\in\R^*$ (note that all these estimates are independent on $x$), one gets:
\begin{align*}
 \vert e^{it\omega(D)}\varphi\vert_\infty \leq C(\frac{1}{\mu^{1/4}}\frac{1}{(1+t/\sqrt{\mu})^{1/8}}+\frac{1}{(1+t/\sqrt{\mu})^{1/2}})(\vert\varphi\vert_{H^1}+\vert x\partial_x\varphi\vert_2).
\end{align*}

\qquad$\Box$
\end{proof}
We now give a short proof for a better decay estimate, but with other norms of control:

\begin{proof}[Proof of Theorem \ref{c2:theorembis}]
We use the same notations as ones of the proof of Theorem \ref{c2:theorem_dispersion}. One can control the first integral of the right hand side of \eqref{c2:decoupage01} differently:
\begin{align*}
\vert\int_{\vert\xi\vert\leq\frac{y_0}{\sqrt{\mu}}}e^{i(t\omega(\xi)+x\xi)}\varphi(\xi)d\xi\vert &\leq \vert\int_{\vert\xi\vert\leq\frac{y_0}{\sqrt{\mu}}}e^{i(t\omega(\xi)+x\xi)}d\xi\vert_\infty \vert \varphi\vert_{L^1}.
\end{align*}
We now control the integral $\int_{\vert\xi\vert\leq\frac{y_0}{\sqrt{\mu}}}e^{it\Phi(\xi)}d\xi$ in $L^\infty$ norm. One has $\Phi'''(\xi) = \omega'''(\xi) = \mu g'''(\sqrt{\mu}\xi)$ with the notations of the proof of Theorem \ref{c2:theorem_dispersion}. One can check by computation that $g'''(\xi)\underset{\xi\rightarrow 0}{\sim} -1$ and therefore, using Van der Corput's Lemma \ref{c2:vandercorput}, one gets:
$$\int_{\vert\xi\vert\leq\frac{y_0}{\sqrt{\mu}}}e^{it\Phi(\xi)}d\xi \leq \frac{C}{(\mu t)^{1/3}}.$$ One gets the first estimate of Theorem \ref{c2:theorembis}.\par\vspace{\baselineskip}

Another way to control the first integral of the right hand side of \eqref{c2:decoupage01} is by integrating by parts:
\begin{align*}
\vert\int_{\vert\xi\vert\leq\frac{y_0}{\sqrt{\mu}}}e^{i(t\omega(\xi)+x\xi)}\varphi(\xi)d\xi\vert&= \vert\int_{\vert\xi\vert\leq\frac{y_0}{\sqrt{\mu}}} \int_0^\xi e^{i(t\omega(s)+xs)}ds\frac{d}{d\xi}\widehat{\varphi}(\xi)d\xi\vert
\end{align*}
and thus, using Cauchy-Schwarz inequality, one gets:
\begin{align*}
\vert\int_{\vert\xi\vert\leq\frac{y_0}{\sqrt{\mu}}}e^{i(t\omega(\xi)+x\xi)}\varphi(\xi)d\xi\vert &\leq \frac{\sqrt{y_0}}{\mu^{1/4}}\underset{\xi\in [0;\frac{y_0}{\sqrt{\mu}}[}{\sup} \vert \int_0^\xi e^{i(t\omega(s)+xs)}ds\vert \vert\frac{d}{d\xi}\widehat{\varphi}\vert_2.
\end{align*}
One estimates as above $$\underset{\xi\in [0;\frac{y_0}{\sqrt{\mu}}[}{\sup} \vert \int_0^\xi e^{i(t\omega(s)+xs)}ds\vert \leq \frac{C}{(\mu t)^{1/3}}$$  and one gets the second estimate of Theorem \ref{c2:theorembis}.$\qquad\Box$
\end{proof}

\begin{remark} It should be possible to prove a dispersive decay in dimension $2$, using the previous results, since the phase $\Phi$ introduced in the proof of Theorem \ref{c2:theorem_dispersion} has a radial symmetry.
\end{remark}

\section{Local existence for the Water-Waves equations in weighted spaces in dimension $d=1,2$}\label{c2:sectionweight}
In view of practical use of Theorem \ref{c2:theorem_dispersion} to prove some long time (or even global time) results in the case of the full Water-Waves equations \eqref{c2:ww_equation1}, one may need to control $x\partial_x\varphi$ in $L^2$ norm, for $\varphi$ a solution of the equations.  We prove in this section a local existence result for the Water-Waves equations in weighted spaces. To this purpose, we briefly give some reminders about the Water-Waves equations, and we state the local existence result proved by \cite{alvarez}. We then give a commutator estimate which is the key point for local existence in weighted spaces. Note that in this Section we do not make any assumption on the dimension unlike in the previous one, and that all the results proved in this section stand in dimensions $d=1,2$.  The local existence result in weighted spaces is used for instance in \cite{mesognon4}, where the default of compactness in the rigid lid limit for the Water-Waves equations is investigated. \par\vspace{\baselineskip}

\subsection{The Water-Waves equations} \label{c2:recall_section} We briefly give some reminders about the Water-Waves equations and its local existence (see \cite{david} Chapter 4 for a complete study).
Let $t_0>d/2$ and $N\geq t_0+t_0\vee 2 + 3/2$ (where $a\vee b = \sup(a,b)$). The energy for the Water-Waves equations is the following (see \ref{c2:notations} for the notations):
\begin{equation}\mathcal{E}^N(U) = \vert\B\psi\vert_{H^{t_0+3/2}} + \sum_{\vert \alpha\vert\leq N} \vert\zetaa\vert_2 + \vert\psia\vert_2\label{c2:standardE}\end{equation}
where $\zetaa,\psia$ are the so called Alinhac's good unknowns:
$$\forall \alpha\in\mathbb{N}^d, \zetaa = \partial^\alpha\zeta,\qquad \psia = \partial^\alpha\psi-\epsilon\underline{w}\partial^\alpha\zeta$$
with $$\underline{w}=\frac{\G\psi+\epsilon\mu\nablag\zeta\cdot\nablag\psi}{1+\epsilon^2\mu\vert\nablag\zeta\vert^2}.$$ 
We consider solutions $U=(\zeta,\psi)$ of the Water Waves equations in the following space:
\begin{equation*}
E_{T}^N = \lbrace U\in C(\left[ 0,T\right];H^{t_0+2}\times\overset{.}H{}^2(\mathbb{R}^d)), \mathcal{E}^N(U(.))\in L^{\infty}(\left[ 0,T\right])\rbrace.
\end{equation*} The following quantity, called the Rayleigh-Taylor coefficient plays an important role in the Water-Waves problem: \begin{equation*}\rt(\zeta,\psi) = 1+\epsilon(\partial_t+\epsilon \underline{V}\cdot\nabla^{\gamma})\underline{w} = -\epsilon\frac{P_0}{\rho a g}(\partial_z P)_{\vert z = \epsilon\zeta}\label{c2:rtdef}\end{equation*} where $$\underline{V} = \nablag\psi-\epsilon\underline{w}\nablag\zeta.$$ As suggested by the notations, $\underline{V}$ and $\underline{w}$ are respectively the horizontal and vertical components of the velocity evaluated at the surface. We recall that the notation $a\vee b$ stands for $\max(a,b)$.  We can now state the local existence result by Alvarez-Samaniego Lannes (see \cite{alvarez} and \cite{david} Chapter 3 for reference):
\begin{theorem}\label{c2:uniform_result}
Let $t_0>d/2$,$N\geq t_0+t_0\vee 2+3/2$. Let $U^0 = (\zeta^0,\psi^0)\in E_0^N, b\in H^{N+1\vee t_0+1}(\R^d)$. Let $\epsilon,\beta,\gamma,\mu$ be such that $$0\leq \epsilon,\beta,\gamma\leq 1,\qquad 0\leq\mu\leq\mu_{\max}$$ with $\mu_{\max}>0$ and moreover assume that: \begin{equation*}\exists h_{min}>0,\exists a_0>0,\qquad 1+\epsilon\zeta^0-\beta b\geq h_{min}\quad\text{ and } \quad  \rt(U^0)\geq a_0.\label{c2:rayleigh}\end{equation*} Then, there exists $T>0$ and a unique solution $U^\epsilon\in E_{\frac{T}{\epsilon\vee\beta}}^N$ to \eqref{c2:ww_equation1} with initial data $U^0$. Moreover, $$\frac{1}{T}= C_1,\quad\text{ and }\quad \underset{t\in [0;\frac{T}{\epsilon\vee\beta}]}{\sup} \mathcal{E}^N(U^\epsilon(t)) = C_2$$ with $\ds C_i=C(\mathcal{E}^N(U^0),\frac{1}{h_{min}},\frac{1}{a_0})$ for $i=1,2$. \end{theorem}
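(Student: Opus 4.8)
This is the now-classical local existence statement of \cite{alvarez}, and the plan is to carry out the energy method in the streamlined form of \cite{david}, Chapter 4, for which $\mathcal{E}^N$ in \eqref{c2:standardE} is precisely the natural energy once \eqref{c2:ww_equation1} is rewritten through Alinhac's good unknowns. Differentiating \eqref{c2:ww_equation1} $\alpha$ times for $\vert\alpha\vert\le N$ produces equations for $\partial^\alpha\zeta$ and $\partial^\alpha\psi$ in which the shape derivative of the Dirichlet--Neumann operator loses one derivative (a term $\tfrac1\mu\,d\mathcal{G}(\partial^\alpha\zeta)\psi$ on the first line, and a corresponding loss on $\psi$ on the second); the algebraic point is that in the unknowns $\zeta_{(\alpha)}=\partial^\alpha\zeta$, $\psi_{(\alpha)}=\partial^\alpha\psi-\epsilon\underline{w}\,\partial^\alpha\zeta$ these contributions cancel modulo lower order, leaving a symmetrizable system of the schematic form
\begin{align*}
\partial_t\zeta_{(\alpha)}+\epsilon\underline{V}\cdot\nabla^\gamma\zeta_{(\alpha)}-\tfrac1\mu\,\mathcal{G}\psi_{(\alpha)}&=R^1_\alpha,\\
\partial_t\psi_{(\alpha)}+\epsilon\underline{V}\cdot\nabla^\gamma\psi_{(\alpha)}+\rt\,\zeta_{(\alpha)}&=R^2_\alpha,
\end{align*}
where $\rt=\rt(\zeta,\psi)$ is the Rayleigh--Taylor coefficient (positive by the hypothesis $\rt(U^0)\ge a_0$, a property one must propagate in time) and the remainders satisfy $\vert R^j_\alpha\vert_2\le(\epsilon\vee\beta)\,C(\mathcal{E}^N(U))$.

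The second ingredient is the elliptic toolbox for $\mathcal{G}$: flattening $\Omega_t$ by the diffeomorphism $\Sigma$ of \eqref{c2:diffeol} turns the Laplace problem into the variational problem \eqref{c2:dirichletneumann} with matrix $P(\Sigma)$, coercive as long as $1+\epsilon\zeta-\beta b\ge h_{min}>0$; from the resulting $\Vert\nabla^{\mu,\gamma}\phi\Vert_{H^{s,k}}$ bounds and trace estimates one obtains tame estimates for $\mathcal{G}\psi$, for $\B\psi$, and for the shape derivatives $d^j\mathcal{G}$, the smoothing built into $\zeta^\delta,b^\delta$ inside $\Sigma$ (as in \cite{alvarez}) being exactly what prevents a net loss of derivative. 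Combined with Moser-type product and commutator estimates, this lets one run the energy estimate on the differentiated system, the relevant quadratic quantity being $\sum_{\vert\alpha\vert\le N}\int\big(\rt\,\vert\zeta_{(\alpha)}\vert^2+\tfrac1\mu\,\psi_{(\alpha)}\mathcal{G}\psi_{(\alpha)}\big)$ (equivalent to $\mathcal{E}^N(U)^2$ up to lower order, and coercive precisely because $\rt\ge a_0>0$), yielding $\tfrac{d}{dt}\big(\mathcal{E}^N(U)^2\big)\le(\epsilon\vee\beta)\,C(\mathcal{E}^N(U))$ while simultaneously $h_{min}$ and $a_0$ deteriorate only on the time scale $1/(\epsilon\vee\beta)$; integrating this differential inequality gives the uniform control on $[0,T/(\epsilon\vee\beta)]$ with $T$ and $\sup\mathcal{E}^N$ depending only on $\mathcal{E}^N(U^0)$, $1/h_{min}$, $1/a_0$.

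For existence itself I would regularize the system (mollifying the nonlinearities and $\mathcal{G}$), solve the regularized evolution by the Cauchy--Lipschitz theorem in $H^{t_0+2}\times\dot H^{2}(\R^d)$, observe that the a priori estimate above is uniform in the regularization parameter on a common interval, and pass to the limit by Aubin--Lions compactness, checking that the depth and Rayleigh--Taylor conditions survive and that the limit solves \eqref{c2:ww_equation1}; uniqueness follows from a low-order energy estimate on the difference of two solutions, which is again symmetrizable with coefficients controlled in $E_T^N$, together with Gronwall. The main obstacle throughout is the quasilinear derivative loss in $\mathcal{G}$ and its shape derivatives: it is what forces the good unknowns $\psi_{(\alpha)}$ and the smoothed diffeomorphism, and it requires all the elliptic and commutator estimates to be proved in the sharp tame form --- no derivative loss, no super-linear dependence on $\mathcal{E}^N$, and with the explicit $\epsilon\vee\beta$ gain --- without which the energy argument does not close on the long time interval.
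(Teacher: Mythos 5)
The paper does not prove this theorem itself: it is quoted directly from Alvarez-Samaniego--Lannes (\cite{alvarez}, see also \cite{david}, Chapters 3--4), and your sketch --- Alinhac's good unknowns cancelling the derivative loss in $d\mathcal{G}$, the symmetrizable quasilinear system with Rayleigh--Taylor coefficient, tame elliptic estimates through the flattening diffeomorphism, the $(\epsilon\vee\beta)$ gain giving the $T/(\epsilon\vee\beta)$ time scale, and regularization plus a low-order estimate for uniqueness --- is exactly the strategy of that cited proof (and of the paper's own adaptation of it in Theorem \ref{c2:wlocal}). So your proposal is correct in outline and takes essentially the same route as the proof the paper relies on.
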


\subsection{A commutator estimate}\label{c2:commut_section}
The key point of the local existence result we prove in this Section is the commutator result of Proposition \ref{c2:commut_x} below. We first need to introduce some technical results about the resolution of the Dirichlet-Neumann problem \eqref{c2:dirichleta}. We use here the notations of Section \ref{c2:recall_section}.  We recall the introduction of the diffeomorphism $\Sigma$ defined by \eqref{c2:diffeol} which maps $\Omega$ into $\mathcal{S} = \R^d\times (-1,0)$. We also recall that $\Phi$ is a solution of \eqref{c2:dirichleta} if and only if $\phi = \Phi\circ\Sigma$ is a solution of the following problem:
\begin{align}\begin{cases}
        \nabla^{\mu,\gamma}\cdot P(\Sigma)\nabla^{\mu,\gamma} \phi = 0  \label{c2:dirichletneumann2}\\
        \phi_{z=0}=\psi,\quad \partial_n\phi_{z=-1} = 0,  \end{cases}
\end{align}
where \begin{equation}P(\Sigma) = \vert \det  J_{\Sigma}\vert J_{\Sigma}^{-1}~^t(J_{\Sigma}^{-1}),\label{c2:defP}\end{equation} where $J_{\Sigma}$ is the Jacobian matrix of the diffeomorphism $\Sigma$. For the sake of clarity in the proof of the main result of this section, we introduce the following notations, for all $\zeta, b,t_0$ satisfying the hypothesis of Theorem \ref{c2:uniform_result}:
\begin{equation}
\begin{aligned}
M_0 &= C(\frac{1}{h_{\min}},\mu_{\max},\vert\zeta\vert_{H^{t_0+1}},\vert b\vert_{H^{t_0+1}}), \\
M &= C(\frac{1}{h_{\min}},\mu_{\max},\vert\zeta\vert_{H^{t_0+2}},\vert b\vert_{H^{t_0+2}}), \\
M(s)&= M_0 = C(M_0,\vert\zeta\vert_{H^{s}},\vert b\vert_{H^{s}}),
\end{aligned} \label{c2:notationM}
\end{equation}
where $C$ denotes a non decreasing function of its arguments. One can prove (see \cite{david} Chapter 2 and equation (2.26)) that \begin{equation}
P(\Sigma) = I_d + Q(\Sigma) \label{c2:decompP}
\end{equation}
with \begin{equation}\label{c2:reguQ}
\norme{Q(\Sigma)}_{H^{s,1}} \leq M_0\vert(\epsilon\zeta,\beta b)\vert_{H^{s+1/2}}.
\end{equation}
One can also prove the coercivity of $P(\Sigma)$:
\begin{equation}\forall \Theta\in\R^d,\qquad P(\Sigma)\Theta\cdot\Theta\geq k(\Sigma)\Theta^2\label{c2:pcoer}\end{equation} where $$\frac{1}{k(\Sigma)}\leq M_0.$$

\subsubsection{Technical results about the boundary problem \eqref{c2:dirichletneumann2}}   As usual for an elliptic problem of the form \eqref{c2:dirichletneumann2} with a Dirichlet condition, we are looking for solutions in the space $\psi+H_{0,surf}^1(\mathcal{S})$ where $H_{0,surf}^1(\mathcal{S})$ is the set of functions of $H^1(\mathcal{S})$ with a vanishing trace at $z=0$ (recall that $\mathcal{S}$ is the flat strip $\R^d\times (-1;0)$). More precisely, we have:
\begin{definition}We define  $H_{0,surf}^1$ as the completion of  $D(\R^d\times [-1;0[)$ endowed with the $H^1$ norm of $\mathcal{S}$.
\end{definition}
We now define the variational solutions to the elliptic equation \eqref{c2:dirichletneumann2}. To this purpose, we introduce for all $\psi\in \mathcal{S}'(\R^d)$ the smoothed distribution \begin{equation}\psi^\dagger(.,z) = \chi(\sqrt{\mu}z\D)\psi\label{c2:defdagger}\end{equation} where $\chi$ is a smooth compactly supported function equals to $1$ in the neighbourhood of the origin. \begin{definition}\label{c2:defvarsol} For all $\psi\in \dot{H}^{1/2}$ a variational solution to \eqref{c2:dirichletneumann2} is $\phi = \tilde{\phi}+\psi^\dagger$ such that $$\int_{\mathcal{S}}\nablamug \tilde{\phi}\cdot P(\Sigma)\nablamug \varphi = -\int_{\mathcal{S}} \nablamug \psi^\dagger \cdot P(\Sigma)\nablamug \varphi$$ for all $\varphi\in H_{0,surf}^1(\mathcal{S})$.\end{definition}
\begin{remark}
As expected for an elliptic problem of the form \eqref{c2:dirichletneumann2}, if $\psi\in H^{s}(\R^d)$, the solution $\Phi$ should be in $H^{s+1/2}(\mathcal{S})$ (as $\psi$ is the trace of $\Phi$ on $z=0$). Therefore, if one uses $\psi$ instead of $\psi^\dagger$ in Definition \ref{c2:defvarsol}, then the formulation provides the same regularity for $\Phi$ as for $\psi$. Instead of brutally considering $\psi$ (which is a function defined on $\R^d$) as a function of $\mathcal{S}$, we introduce $\psi^\dagger$ which is indeed $1/2$ more regular than $\psi$ and is defined on all $\mathcal{S}$. 
\end{remark}
We are now able to give the existence result for the problem \eqref{c2:dirichletneumann2} (recall the notations of \eqref{c2:notationM} for the constants $M$ and $M(s)$, and see \cite{david} for reference):
\begin{proposition} 
Let $t_0>d/2$ and $s\geq 0$. Let $\zeta,b\in H^{s+1/2}\cap H^{t_0+1}(\R^d)$ be such that $$\exists h_{\min}>0,\qquad \forall X\in\R^d,\qquad 1+\epsilon\zeta(X)-\beta b(X)\geq h_{\min}.$$ Then, for all $\psi\in \dot{H}^{s+1/2}(\R^d)$, there exists a unique variational solution $\phi$ to \eqref{c2:dirichletneumann2}. Moreover, this solution satisfies:
\begin{align*}
\forall 0\leq s\leq t_0+3/2,\qquad &\norme{\Lambda^s\nablamug\phi}_2\leq\sqrt{\mu}M(s+1/2)\vert\B\psi\vert_{H^s},\\
\forall t_0+3/2\leq s,\qquad  &\norme{\Lambda^s\nablamug\phi}_2\leq\sqrt{\mu}M(\vert\B\psi\vert_{H^s}+\vert(\epsilon\zeta,\beta b)\vert_{H^{s+1/2}}\vert\B\psi\vert_{H^{t_0+3/2}}).
\end{align*}
Moreover, if $s\geq -t_0+1$, the same estimates hold on $\norme{\nablamug\phi}_{H^{s,1}}$.
\label{c2:236}
\end{proposition}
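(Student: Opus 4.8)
The plan is to treat \eqref{c2:dirichletneumann2} as a standard variational elliptic problem on the fixed strip $\S=\R^d\times(-1,0)$, to obtain existence, uniqueness and the $s=0$ estimate from Lax--Milgram, and then to propagate regularity by commuting the Fourier multiplier $\Lambda^s$ in the $X$ variables through the equation, keeping careful track of the powers of $\mu$ throughout. This is exactly the argument of \cite{david}, Chapter~2; I indicate its skeleton.

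First I would set up the variational problem. On $H^1_{0,surf}(\S)$ the bilinear form $a(u,v)=\int_\S\nablamug u\cdot P(\Sigma)\nablamug v$ is continuous, because $P(\Sigma)\in L^\infty$ by \eqref{c2:decompP}--\eqref{c2:reguQ}, and coercive: the lower bound \eqref{c2:pcoer} gives $a(u,u)\ge k(\Sigma)\norme{\nablamug u}_2^2$, and since $u$ vanishes at $z=0$ the elementary inequality $|u(X,z)|^2\le\int_{-1}^0|\partial_z u(X,z')|^2\,dz'$, integrated over $\S$, controls $\norme{u}_2$ by $\norme{\partial_z u}_2\le\norme{\nablamug u}_2$. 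The source $v\mapsto-\int_\S\nablamug\psi^\dagger\cdot P(\Sigma)\nablamug v$ is a continuous linear form because $\nablamug\psi^\dagger\in L^2$; here one checks directly on the Fourier side, using the definition \eqref{c2:defdagger} of $\psi^\dagger$ together with \eqref{c2:defp}, the clean bound $\norme{\nablamug\psi^\dagger}_2\le C\sqrt\mu\,\vert\B\psi\vert_2$ --- this is where the $z$-localization of $\psi^\dagger$ and the smoothing built into $\B$ conspire to produce exactly the right power of $\mu$. Lax--Milgram then yields a unique $\tilde\phi\in H^1_{0,surf}(\S)$, hence a unique variational solution $\phi=\tilde\phi+\psi^\dagger$; testing against $\varphi=\tilde\phi$ and combining the two bounds gives the $s=0$ estimate $\norme{\nablamug\phi}_2\le\sqrt\mu\,M(1/2)\,\vert\B\psi\vert_2$.

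For $s>0$ I would argue by induction, applying $\Lambda^s$ to \eqref{c2:dirichletneumann2} (using a Bony paraproduct decomposition to handle non-integer $s$) and writing $P(\Sigma)=I_d+Q(\Sigma)$ as in \eqref{c2:decompP}. The equation for $\Lambda^s\nablamug\phi$ then has the same shape as before, with a right-hand side made of commutators $[\Lambda^s,Q(\Sigma)]\nablamug\phi$; these are controlled by the standard product and commutator estimates in Sobolev spaces, fed with the bound \eqref{c2:reguQ} on $\norme{Q(\Sigma)}_{H^{s,1}}$ and the inductive control of $\norme{\nablamug\phi}$ at lower regularity. Crucially the second normal derivative is never differentiated tangentially: since the bottom-right entry of $P(\Sigma)$ is bounded below by \eqref{c2:pcoer}, $\partial_z^2\phi$ is recovered algebraically from the equation in terms of $\nablag\partial_z\phi$, tangential second derivatives, and first derivatives of the coefficients, which also yields the last statement about $\norme{\nablamug\phi}_{H^{s,1}}$ after one extra $\partial_z$, the negative-index range $s\ge-t_0+1$ being reached by duality against the positive-index estimates. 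For $0\le s\le t_0+3/2$ every coefficient norm that appears is absorbed into $M(s+1/2)$, giving the first bound; for $s\ge t_0+3/2$ one performs the usual high/low frequency splitting in each product so that only one factor carries the top $s+1/2$ derivatives of $(\epsilon\zeta,\beta b)$ while the conjugate factor is measured in the fixed space $H^{t_0+3/2}$, producing the tame second bound.

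The main obstacle is precisely this bookkeeping in the inductive step for large $s$: obtaining the \emph{tame} (linear) dependence on the top norm of $(\epsilon\zeta,\beta b)$ forces the high/low splitting of every product and commutator, and at the same time one must verify that no positive power of $\mu$ leaks away --- each gradient hitting $\psi^\dagger$ or produced by the normal elliptic regularity has to be matched with the $\sqrt\mu$ carried by $\nablamug$ and with the gain encoded in $\B$. Once the Fourier-side $\mu$-tracking of the two model quantities $\norme{\nablamug\psi^\dagger}_2$ and of $\partial_z^2\phi$ via the equation is secured, the rest is a routine, if lengthy, induction.
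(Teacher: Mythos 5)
Your sketch is correct and follows the same route the paper relies on: the paper does not prove Proposition \ref{c2:236} but cites \cite{david}, Chapter 2, and your argument (Lax--Milgram with the coercivity \eqref{c2:pcoer} and Poincar\'e on the strip, the bound $\norme{\nablamug\psi^\dagger}_2\leq C\sqrt{\mu}\vert\B\psi\vert_2$ for the source, then regularity by commuting a regularized tangential multiplier through the equation with tame commutator/product estimates, and recovering $\partial_z^2\phi$ algebraically from the equation for the $H^{s,1}$ statement) is exactly that proof, and is also the scheme the paper itself reproduces in the proof of Lemma \ref{c2:238}. The only imprecise aside is attributing the restriction $s\geq -t_0+1$ to a duality argument, whereas it is simply the threshold needed for the $L^\infty H^{t_0}\times H^{s-1,0}$ product estimate when $\partial_z^2\phi$ is expressed through the equation; this does not affect the substance.
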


\subsubsection{Main result}
We prove in this section an estimate for the commutator $[\frac{1}{\mu}\G,x]\partial_x$ in $H^s$ norm, where $x$ is one of the variable of $\R^d$ (we denote $x$ instead of $x_j$ for the sake of clarity). In the case of a flat bottom and a flat surface in dimension $1$, one has for all $\varphi\in\mathcal{S}(\R^d)$  and all $\xi>0$:  $$\widehat{[\frac{1}{\mu}\G,x]\partial_x\varphi} = \frac{d}{d\xi} (\frac{\tanh(\sqrt{\mu}\xi)}{\sqrt{\mu}}\xi) \xi\widehat{\varphi}(\xi) = \big( \frac{\tanh(\sqrt{\mu}\xi)}{\sqrt{\mu}}+(1-\tanh(\sqrt{\mu}\xi)^2)\xi\big)\xi \widehat{\varphi}(\xi),$$
and thus one should expect a control of the form $$ \vert [\frac{1}{\mu}G,x]\partial_x\varphi\vert_{H^{s+1/2}} \leq C \vert \B\varphi\vert_{H^{s+3/2}},$$ with $C$ independent on $\mu$, where we recall that $\B$ acts like the square root of the Dirichlet-Neumann operator and is defined by 
$$\B = \frac{\D}{(1+\sqrt{\mu}\D)^{1/2}}.$$ \begin{remark}\begin{itemize}[label=--,itemsep=0pt] \item The operator $\frac{1}{\mu}G$ has to be seen as a $3/2$ order operator instead of an one order operator if one needs a  bound which is not singular with respect to $\mu$. Indeed, the brutal bound $\frac{\tanh(\sqrt{\mu}\xi)}{\sqrt{\mu}}\leq \frac{1}{\mu^{1/2}}$ is singular in $\mu$. We still gain one derivative in the commutator $[\frac{1}{\mu}\G,x]$ and have controls uniforms with respect to $\mu$.
\item In the statement of Proposition \ref{c2:236}, we distinguish the cases $s+1/2>t_0$ and $s+1/2<t_0$. This is to have tame estimates with respect to the $H^s$ norms of the unknowns, for high values of $s$.    \end{itemize}\end{remark} 
 The following Proposition shows that the result stands true with non flat bottom and surface, and in all dimensions. We denote $\langle a\rangle_{cond} = a$ is $cond$ is satisfied, and else $0$.
\begin{proposition}\label{c2:commut_x}
Let $t_0>d/2$, $s\geq -1/2$, and $\zeta,b\in H^{t_0+2}(\R^d)$ be such that $$\exists h_{\min}>0,\quad\forall X\in\R^d,\quad 1+\epsilon\zeta(X)-\beta b(X) \geq h_{\min}.$$ We denote $x$ one of the variables of $\R^d$. Then, one has for all $\varphi\in \dot{H}^{s+2}(\R^d)$: $$\vert [\frac{1}{\mu}\G,x]\partial_x \varphi\vert_{H^{s+1/2}} \leq \mu M(s+1)\vert\B \varphi\vert_{H^{s+3/2}}+\langle \vert\B \varphi\vert_{H^{t_0+2}}\vert(\epsilon\zeta,\beta b)\vert_{H^{s+1}})\rangle_{s+1/2>t_0}.$$ Moreover, if $\nabla f\in H^{t_0+1/2}\cap H^{s+1/2}$, one has for all $\varphi\in \dot{H}^{s+2}(\R^d)$: \begin{equation*}
\begin{aligned}
\vert [\frac{1}{\mu}\G,x]f\partial_x \varphi\vert_{H^{s+1/2}} &\leq \mu M(s+1)\vert\nablag f\vert_{H^{t_0+1/2}}\vert\B \varphi\vert_{H^{s+3/2}}\\&+\langle \vert\B \varphi\vert_{H^{t_0+2}}\vert(\epsilon\zeta,\beta b)\vert_{H^{s+1}})\vert\nablag f\vert_{H^{s+1/2}}\rangle_{s+1/2>t_0}.
\end{aligned}
\end{equation*}
\end{proposition}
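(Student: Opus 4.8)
The plan is to reduce the commutator estimate to the elliptic regularity estimates of Proposition \ref{c2:236} by working with the variational formulation of the Dirichlet--Neumann problem \eqref{c2:dirichletneumann2}. First I would record an explicit formula for $[\frac{1}{\mu}\G,x]\partial_x\varphi$. Writing $\phi=\Phi\circ\Sigma$ for the solution of \eqref{c2:dirichletneumann2} with datum $\varphi$, we have $\frac{1}{\mu}\G\varphi = \frac{1}{\mu}(\partial_z\phi - \mu\nablag(\epsilon\zeta)\cdot\nablag\phi)_{|z=0}$ up to the usual surface correction; differentiating the boundary value problem in the direction $x$ (which commutes with $\Delta^{\mu,\gamma}$ up to lower order terms coming from the $x$-dependence of $P(\Sigma)$) produces an elliptic problem for $[\cdot,x]\phi$ with a source term of the form $\nabla^{\mu,\gamma}\cdot\big([P(\Sigma),x]\nabla^{\mu,\gamma}\phi\big)$ plus boundary contributions, where $[P(\Sigma),x]=x$ applied to the entries of $Q(\Sigma)$ via \eqref{c2:decompP}. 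The key point is that the commutator kills the identity part $I_d$ of $P(\Sigma)$ (see \eqref{c2:decompP}), so the worst term, which would be genuinely of order $1/\sqrt\mu$, disappears, and one is left with a source involving only $Q(\Sigma)$, whose $H^{s,1}$ norm is controlled by \eqref{c2:reguQ} and carries a favorable power of $\mu$ (since $Q(\Sigma)=P(\Sigma)-I_d$ is built from $\epsilon\zeta,\beta b$ and the associated Jacobian, the relevant bound comes with $\mu$, not $1/\sqrt\mu$). One then applies Proposition \ref{c2:236} to the auxiliary elliptic problem, takes the trace at $z=0$ using the trace estimate $\norme{f_{|z=0}}_{H^{s}}\lesssim\norme{f}_{H^{s+1/2,1}}$, and uses that $\B$ acts as the square root of $\frac1\mu\G$ to convert $\vert\nablamug\phi\vert$-type bounds into $\vert\B\varphi\vert_{H^{\cdot}}$ bounds.

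More concretely, I would proceed in the following steps. Step 1: set $u = [\frac{1}{\mu}\G,x]\partial_x\varphi$ and, via the variational characterization, identify $u$ (or rather $\partial_x\varphi$-weighted analogue) as the surface data of $\nablamug\theta$ where $\theta\in H^1_{0,surf}(\S)$ solves $\nablamug\cdot P(\Sigma)\nablamug\theta = \nablamug\cdot F$ with $F$ built from $[Q(\Sigma),x]\nablamug\phi$ and the commutators of $x$ with the $\nablamug$ in the definition of $\frac1\mu\G$; care is needed because $x$ is only one Cartesian variable, so only the $\partial_x$-component produces commutator terms, and multiplying by $x$ does not leave $H^s(\R^d)$, which is exactly why the problem is phrased with $\partial_x\varphi$ (so that $x\partial_x$ is, modulo the commutator, a vector field of order $0$ in the scaling count). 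Step 2: estimate $\norme{F}_{H^{s+1/2,1}}$ using \eqref{c2:reguQ}, Proposition \ref{c2:236} applied to $\phi$ itself, and product/tame estimates in $H^{s}\cap H^{t_0}$, splitting into the cases $s+1/2\le t_0$ and $s+1/2>t_0$ — this is the origin of the $\langle\cdot\rangle_{s+1/2>t_0}$ term in the statement. Step 3: apply Proposition \ref{c2:236} (the version with $H^{s,1}$ control, valid for $s\ge -t_0+1$, which holds here since $t_0>d/2\ge 1/2$ and $s\ge -1/2$) to the $\theta$-problem and take the trace. Step 4: for the second, more general, estimate with a function $f$, repeat the argument with $f\partial_x\varphi$ in place of $\partial_x\varphi$; the extra commutator $[\frac1\mu\G,x]$ hitting $f$ produces additional terms estimated by $\vert\nablag f\vert_{H^{t_0+1/2}}$ and $\vert\nablag f\vert_{H^{s+1/2}}$ via the same tame product estimates.

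The main obstacle I expect is Step 1 together with the bookkeeping in Step 2: one must carefully commute $x$ through $\nabla^{\mu,\gamma}\cdot P(\Sigma)\nabla^{\mu,\gamma}$ and through the surface formula for the Dirichlet--Neumann operator, keeping track of which terms retain the dangerous $\frac{1}{\sqrt\mu}$ weight and verifying that all of them are commutators with the identity part (hence vanish) while the survivors pair with $Q(\Sigma)$ and thus pick up $\mu M(s+1)$. A secondary subtlety is that the auxiliary problem for $\theta$ has a Dirichlet datum that is not simply $x\partial_x\varphi$ (which need not lie in any $\dot H$ space), so one should set things up so that only $\nablamug\theta$, and not $\theta$ itself, is ever estimated, and only its trace at $z=0$ is extracted; the Neumann condition at $z=-1$ is inherited from that of $\phi$ up to harmless terms. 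Once the elliptic side is correctly organized, the remaining estimates are routine applications of Proposition \ref{c2:236}, \eqref{c2:reguQ}, the coercivity \eqref{c2:pcoer}, trace inequalities, and tame product estimates, with the dichotomy $s+1/2\lessgtr t_0$ handled exactly as in Proposition \ref{c2:236}.
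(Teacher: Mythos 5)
Your overall architecture (reduce the commutator to an auxiliary elliptic problem on the flat strip for the defect between ``extension of the product'' and ``product of the extension'', then invoke the elliptic estimates and take a trace/duality pairing) is the same as the paper's, and your secondary worries — that only $\nablamug$ of the auxiliary unknown can ever be estimated, and that the dichotomy $s+1/2\lessgtr t_0$ comes from the tame product estimates — are well placed. But the mechanism you designate as ``the key point'' is wrong, and it is exactly the point where the powers of $\mu$ in the statement must be produced. You claim that commuting $x$ with the operator kills the identity part $I_d$ of $P(\Sigma)$, so that the dangerous $1/\sqrt\mu$ contribution disappears and the surviving source involves only $Q(\Sigma)$, whose bound allegedly ``carries a favorable power of $\mu$''. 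Neither assertion holds: $P(\Sigma)$ is a multiplication operator, so $[P(\Sigma),x]=0$ identically — there is nothing for the commutator to cancel — and the estimate \eqref{c2:reguQ} for $Q(\Sigma)$ contains no power of $\mu$ at all. The commutator that actually matters is $[\nablamug,x]=\nablamug x$, which is a \emph{horizontal} vector carrying a factor $\sqrt\mu$ (and whose vertical component vanishes since $\partial_z x=0$). Writing $v=\partial_x\varphi$ and $w=(xv)^{\mathfrak{h}}-xv^{\mathfrak{h}}$, the auxiliary problem has sources $-\nablamug\cdot\bigl(P(\Sigma)(\nablamug x)\,v^{\mathfrak{h}}\bigr)-P(\Sigma)(\nablamug x)\cdot\nablamug v^{\mathfrak{h}}$, in which the \emph{full} matrix $P(\Sigma)$ survives; the gain of $\mu$ in the final estimate comes from the $\sqrt\mu$ in $\nablamug x$ combined with the $\sqrt\mu$ in the elliptic bound $\norme{\nablamug v^{\mathfrak{h}}}\lesssim\sqrt\mu\,M\vert\B v\vert_{H^{\cdot}}$ of Proposition \ref{c2:236}, not from any smallness of $Q(\Sigma)$. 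As written, your Step 1 produces a source $\nablamug\cdot\bigl([P(\Sigma),x]\nablamug\phi\bigr)$ that is literally zero, and your Step 2 would then have nothing with which to generate the factor $\mu M(s+1)$; following your accounting one would either lose the $\mu$ or be unable to close the estimate.

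Two further points you should repair when rewriting. First, Proposition \ref{c2:236} is stated for the homogeneous Dirichlet--Neumann problem and does not apply directly to the auxiliary problem with interior sources; one needs an elliptic regularity variant for $\nablamug\cdot(P\nablamug w)=-\nablamug\cdot g+f$ with vanishing trace at the surface (the paper proves this separately as Lemma \ref{c2:238}, via the coercivity \eqref{c2:pcoer}, regularized multipliers $\Lambda^s_\delta$, Poincar\'e on the strip, and a commutator estimate with $Q$). Second, because only $\nablamug\phi$ is controlled, the control of $\Vert v^{\mathfrak{h}}\Vert_{H^{s+1,1}}$ for $v=\partial_x\varphi$ (and for $v=f\partial_x\varphi$ in the second estimate) requires yet another auxiliary problem for $(\partial_x\varphi)^{\mathfrak{h}}-\partial_x(\varphi^{\mathfrak{h}})$ (respectively $(f\partial_x\varphi)^{\mathfrak{h}}-f(\partial_x\varphi)^{\mathfrak{h}}$); this is where the hypothesis on $\nablag f$ and the factors $\vert\nablag f\vert_{H^{t_0+1/2}}$, $\vert\nablag f\vert_{H^{s+1/2}}$ enter, a step your Step 4 glosses over.
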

\begin{remark}
The fact that the commutator is applied to $\partial_x\varphi$ instead of $\varphi$ is crucial in this result. This is due to the fact that one only controls $\nablamug\Phi$ instead of $\Phi$, where $\Phi$ solves \eqref{c2:dirichletneumann2}, and thus some terms of the form $x\Phi$ are not controlled (while there derivatives are controlled). Remark that the second point of the Proposition implies the first one (just take $f=1$), but its proof requires to use the first point as one shall see during the proof below.
\end{remark}
\begin{proof}
The proof is an adaptation of the commutator estimate $[\Lambda^s,\frac{1}{\mu}G]$ which is proved in $\cite{david}$, using a duality argument. We set $$v=\partial_x\varphi.$$ For all $v\in \dot{H}^{1/2}$, we will denote $v^\mathfrak{h}$ the solution of the Dirichlet-Neumann problem \eqref{c2:dirichletneumann2} with boundary condition $v^\mathfrak{h}_{z=0} =v$. This notation stands for "harmonic extension of $v$". We recall the notation $\psi^\dagger$ for all $\psi\in\S'(\R^d)$ given by \eqref{c2:defdagger}. We now write, for all $u\in \mathcal{S}(\R^d)$:
$$(\Lambda^{s+1/2} u,[\G,x]v)_2 = (\Lambda^{s+1/2} u,\G xv)_2-(x \Lambda^{s+1/2} u,\G v)_2.$$ Since $(\Lambda^{s+1/2} x u)_{z=0}^\dagger = \Lambda^{s+1/2} xu$, we get using Green's identity that
\begin{align}
(u,[\G,x]v) &= \int_\mathcal{S} \nabla^{\mu,\gamma} \Lambda^{s+1/2} u^\dagger\cdot P(\Sigma)\nabla^{\mu,\gamma}(xv)^{\mathfrak{h}}-\int_\mathcal{S} P(\Sigma)\nabla^{\mu,\gamma}v^{\mathfrak{h}}\cdot\nablamug( x\Lambda^{s+1/2} u^\dagger)\nonumber\\
&= \int_{\mathcal{S}}\nablamug \Lambda^{s+1/2} u^\dagger\cdot P(\Sigma) \nablamug\big( (xv)^{\mathfrak{h}}-xv^{\mathfrak{h}}\big)+\int_\mathcal{S} \nablamug\Lambda^{s+1/2} u^\dagger\cdot P(\Sigma) (\nablamug x)v^\mathfrak{h} \nonumber\\
&-\int_\mathcal{S}P(\Sigma)\nablamug v^\mathfrak{h}\cdot(\nablamug x)\Lambda^{s+1/2} u^\dagger. \label{c2:decomp_commutateur}
	\end{align}
We start to control the easiest term of \eqref{c2:decomp_commutateur}, using Cauchy-Schwarz inequality (recall that $\vert\cdot\vert$ stands for norms on $\R^d$ while $\Vert\cdot\Vert$ stands for norms on the flat strip $\S = \R^d\times (-1;0)$):
\begin{align*}
\vert \int_\mathcal{S}P(\Sigma)\nablamug v^\mathfrak{h}\cdot(\nablamug x)\Lambda^{s+1/2} u^\dagger \vert &\leq \sqrt{\mu}\Vert  u^\dagger\Vert_2 \norme{\Lambda^{s+1/2} P(\Sigma)\nablamug v^\mathfrak{h}}_2
\end{align*}
where the $\sqrt{\mu}$ factor comes from the definition of $\nablamug x = ~^t(\sqrt{\mu}\partial_x,\gamma\sqrt{\mu}\partial_y,\partial_z)x$ (see Section \ref{c2:notations}) and the fact that $\partial_z x = 0$. Using the definition of $u^\dagger$, one has easily \begin{equation}
\vert u^\dagger\vert_2 \leq \Vert u\Vert_2. \label{c2:controludagger1}
\end{equation} We now use the product estimate of Proposition \ref{c2:B5} and the decomposition $P(\Sigma)=I_d+Q$ of \eqref{c2:decompP} to write:
\begin{equation}\begin{aligned}
&\forall 0\leq s+1/2\leq t_0, &&\norme{\Lambda^{s+1/2} P(\Sigma)\nablamug v^\mathfrak{h}}_2 &\leq C(1+\norme{Q}_{L^\infty H^{t_0}})\norme{\nablamug v^\mathfrak{h}}_{H^{s+1/2,0}}\\
&\forall t_0+3/2<s+1/2, &&\norme{\Lambda^{s+1} P(\Sigma)\nablamug v^\mathfrak{h}}_2 &\leq C(1+\norme{Q}_{L^\infty H^{t_0}})\norme{\nablamug v^\mathfrak{h}}_{H^{s+1/2,0}} \\ & && &+\Vert\Lambda^{s+1/2} Q\Vert_2\Vert\nablamug v^\mathfrak{h}\Vert_{L^\infty H^{t_0}}.
\end{aligned}\label{c2:decompo1}\end{equation}
\begin{remark} We don't treat the case $t_0< s \leq t_0+3/2$, since we will obtain it by interpolation of the two cases above. One has to combine the difference in the product estimate of Proposition \ref{c2:B5} between the cases $0\leq s+1\leq t_0$ and $t_0 < s$, and the difference in Lemma \ref{c2:238} between the cases $0\leq s+1\leq t_0+3/2$ and $t_0+3/2 < s$. For this reason, we split the proof in only two cases, and get the third one by interpolation.
\end{remark}
One has, using \eqref{c2:reguQ} and the embedding $H^{s+1/2,1}$ in $L^\infty H^s(\R^d)$ given by Proposition \ref{c2:212}: \begin{equation}\Vert Q\Vert_{L^\infty H^{t_0}} \leq M_0,\qquad \Vert \Lambda^{s+1/2} Q\Vert_2 \leq M_0 \vert (\epsilon\zeta,\beta b)\vert_{H^{s+1}}\label{c2:decompo2}.\end{equation} We use Proposition \ref{c2:236} and Proposition \ref{c2:212} to write:
\begin{equation}\begin{aligned}
&\forall 0\leq s+1/2\leq t_0,&&\norme{\nablamug v^\mathfrak{h}}_{H^{s+1/2,0}} &\leq \sqrt{\mu}M(s+1)\vert\B v\vert_{H^{s+1}}\\
&\forall t_0+3/2<s+1/2, &&\norme{\nablamug v^\mathfrak{h}}_{H^{s+1/2,0}} &\leq \sqrt{\mu}M(\vert\B v\vert_{H^{s+1/2}}+\vert(\epsilon\zeta,\beta b)\vert_{H^{s+1}}\vert\B v\vert_{H^{t_0+3/2}})\\
& &&\norme{\nablamug v^\mathfrak{h}}_{L^\infty H^{t_0}} &\leq \sqrt{\mu} M_0\vert\B\psi\vert_{H^{t_0+1/2}}.
\end{aligned}\label{c2:decompo3}\end{equation}
Combining \eqref{c2:decompo2} and \eqref{c2:decompo3} in \eqref{c2:decompo1}, one finally gets:
\begin{equation}\label{c2:decompo4}\begin{aligned}
&\forall 0\leq s+1/2\leq t_0,&& \norme{\Lambda^{s+1/2} P(\Sigma)\nablamug v^\mathfrak{h}}_2 &\leq \sqrt{\mu} M(s+1)\vert\B v\vert_{H^{s+1/2}} \\
&\forall t_0+3/2<s+1/2, &&\norme{\Lambda^{s+1/2} P(\Sigma)\nablamug v^\mathfrak{h}}_2 &\leq \sqrt{\mu} M(\vert\B v\vert_{H^{s+1/2}}+\vert\B v\vert_{H^{t_0+1}}\vert(\epsilon\zeta,\beta b)\vert_{H^{s+1}}).
\end{aligned}\end{equation}
Combining \eqref{c2:decompo4} with \eqref{c2:controludagger1}, one finally gets:
\begin{equation}
\vert \int_\mathcal{S}P(\Sigma)\nablamug v^\mathfrak{h}\cdot(\nablamug x)\Lambda^{s+1/2} u^\dagger \vert \leq  \mu M(s+1)\vert\B v\vert_{H^{s+1/2}}+\langle \vert\B v\vert_{H^{t_0+1}}\vert(\epsilon\zeta,\beta b)\vert_{H^{s+1}})\rangle_{s+1/2>t_0}\vert u\vert_2
\end{equation}
where we recall the notation $\langle a\rangle_{cond} = a$ if $cond$ is satisfied, and else $0$. Note that we got the result for $t_0<s<t_0+3/2$ by interpolation. Remembering that $v=\partial_x\varphi$, one gets the control of Proposition \ref{c2:commut_x} for this term. \par\vspace{\baselineskip}

We now focus on the most difficult term of \eqref{c2:decomp_commutateur} (the last term of \eqref{c2:decomp_commutateur} is estimated by a similar technique). Using Cauchy-Schwarz inequality, one gets:
\begin{align*}\vert  \int_{\mathcal{S}}\nablamug \Lambda^{s+1/2} u^\dagger\cdot P(\Sigma)\cdot \nablamug\big( (xv)^{\mathfrak{h}}-xv^{\mathfrak{h}}\big)\vert &\leq \Vert \Lambda^{s+1} P(\Sigma)\nabla^{\mu,\gamma}\big( (xv)^{\mathfrak{h}}-xv^{\mathfrak{h}}\big)\Vert_2\Vert \Lambda^{-1/2} \nabla^{\mu,\gamma} u^\dagger\Vert_2. \end{align*} 
 Using the definition of $u^\dagger$ given by \eqref{c2:defdagger}, one has easily \begin{equation}\Vert \Lambda^{-1/2} \nabla^{\mu,\gamma} u^\dagger\Vert_2 \leq C\mu^{1/4}\vert u\vert_2.\label{c2:control_commut3}\end{equation} The product estimate of Proposition \ref{c2:B5} shows that 
\begin{align*}
&\forall 0\leq s+1\leq t_0, &&\Vert \Lambda^{s+1} P(\Sigma)\nabla^{\mu,\gamma}\big( (xv)^{\mathfrak{h}}-xv^{\mathfrak{h}}\big)\Vert_2 &\leq C(1+\Vert Q\Vert_{L^\infty H^{t_0}})\Vert \Lambda^{s+1}\nabla^{\mu,\gamma}\big( (xv)^{\mathfrak{h}}-xv^{\mathfrak{h}}\big)\Vert_2\\
&\forall t_0+3/2\leq s+1,&&\Vert \Lambda^{s+1} P(\Sigma)\nabla^{\mu,\gamma}\big( (xv)^{\mathfrak{h}}-xv^{\mathfrak{h}}\big)\Vert_2 &\leq C(1+\Vert Q\vert_{L^\infty H^{t_0}})\Vert \Lambda^{s+1}\nabla^{\mu,\gamma}\big( (xv)^{\mathfrak{h}}-xv^{\mathfrak{h}}\big)\Vert_2\\ & && &+\Vert\Lambda^{s+1} Q\Vert_2\Vert\nabla^{\mu,\gamma}\big( (xv)^{\mathfrak{h}}-xv^{\mathfrak{h}}\big)\Vert_{L^\infty H^{t_0}}.
\end{align*}
One has $\Vert Q\Vert_{L^\infty H^{t_0}} \leq M_0$ and $\Vert \Lambda^{s+1} Q\Vert_2 \leq M_0 \vert (\epsilon\zeta,\beta b)\vert_{H^{s+3/2}}$. The proof of Proposition \ref{c2:commut_x} is completed if one can prove: 
\begin{equation}\begin{aligned}
\forall s\geq -1/2, \Vert \Lambda^{s+1}\nabla^{\mu,\gamma}\big( (xv)^{\mathfrak{h}}-xv^{\mathfrak{h}}\big)\Vert_2 &\leq \mu M(s+1/2)\vert\B\varphi\vert_{H^{s+1}}\\&+\langle \vert \B\varphi\vert_{H^{t_0+2}}\vert(\epsilon\zeta,\beta b)\vert_{H^{s+1}})\rangle_{s+1>t_0}.
\end{aligned}\label{c2:result_to_prove}\end{equation}
Note that the case $t_0\leq s\leq t_0+3/2$ is obtained by interpolation. We now prove the estimate \eqref{c2:result_to_prove} for $s+1\leq t_0$. The case $s+1>t_0+3/2$ is estimated by the same technique, so we omit it for the sake of clarity. The case $ t_0 \leq s\leq t_0+3/2$ is obtained by interpolation.
 The quantity $w=(xv)^\mathfrak{h}-xv^\mathfrak{h}$ satisfies the following elliptic equation:
\begin{equation}
\begin{cases}
\nablamug\cdot(P(\Sigma)\nablamug w) = -\nablamug \cdot (P(\Sigma) \nablamug x v^\mathfrak{h}) - P(\Sigma)\nablamug x\cdot \nablamug v^\mathfrak{h}\\
w_{z=0}=0,\qquad \partial_n w_{z=-1} = 0.
\end{cases}\label{c2:elliptic_2}
\end{equation}
We now prove the following elliptic regularity type result:

\begin{lemma}\label{c2:238}Let $t_0>d/2$, $s\geq 0$ and $\Sigma$ be the diffeomorphism from $\Omega$ to $\mathcal{S}$ defined by \eqref{c2:diffeol}. Let $\zeta,b\in H^{t_0+1}\cap H^{s+1/2}(\R^d)$ be such that $$\exists h_{\min}>0,\quad\forall X\in\R^d,\quad 1+\epsilon\zeta(X)-\beta b(X) \geq h_{\min}.$$  We consider the following elliptic problem:
\begin{equation}
\begin{cases}
\nablamug\cdot(P(\Sigma)\nablamug w) = -\nablamug \cdot g +f\\
w_{z=0}=0,\qquad \partial_n w_{z=-1} = 0.
\end{cases}\label{c2:elliptic_3}
\end{equation}
Then, there exists a unique variational solution $w\in H_{0,surf}^1(\mathcal{S})$ to the boundary value problem \eqref{c2:elliptic_3}.
Moreover, one has :\begin{align*} 
&\forall 0\leq s\leq t_0+3/2, &&\sqrt{\mu}\forall\Vert \Lambda^{s}\nablamug w\Vert_2\leq  M(s+1/2)(\Vert g\Vert_{H^{s,1}}+\Vert f\Vert_{H^{s-1,0}})\\
&\forall t_0+3/2\leq s, &&\sqrt{\mu}\forall\Vert \Lambda^{s}\nablamug w\Vert_2\leq M(\Vert g\Vert_{H^{s,1}}+\Vert f\Vert_{H^{s-1,0}} \\
& &&+\vert(\epsilon\zeta,\beta b)\vert_{H^{s+1/2}}(\Vert g\Vert_{H^{t_0+1/2,1}}+\Vert f\vert_{H^{t_0-1/2,0}}).
\end{align*}
\end{lemma}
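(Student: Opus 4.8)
The plan is to follow the standard variational treatment of the mixed Dirichlet--Neumann problem \eqref{c2:elliptic_3}, carrying the $\nabla^{\mu,\gamma}$-weighted norms and the two source terms $g,f$ through it. For \emph{existence and uniqueness} I would invoke Lax--Milgram on $H_{0,surf}^1(\mathcal{S})$: the bilinear form $a(w,\varphi)=\int_{\mathcal{S}}\nabla^{\mu,\gamma}w\cdot P(\Sigma)\nabla^{\mu,\gamma}\varphi$ is continuous and, by the coercivity \eqref{c2:pcoer} together with the Poincaré inequality on the strip (valid since the trace at $z=0$ vanishes, so $\vert w\vert_2\lesssim\Vert\partial_z w\Vert_2\le\Vert\nabla^{\mu,\gamma}w\Vert_2$), coercive; while $\varphi\mapsto\int_{\mathcal{S}}g\cdot\nabla^{\mu,\gamma}\varphi+(f,\varphi)$ (the last term being the $H^{-1,0}$--$H^{1,0}$ pairing) is a continuous functional for $g\in L^2(\mathcal{S})$ and $f\in H^{-1,0}$. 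Testing the resulting identity with $w$ itself, and bounding $\Vert w\Vert_{L^2_zH^1_X}\le C\max(1,\mu^{-1/2})\Vert\nabla^{\mu,\gamma}w\Vert_2$ (Poincaré in $z$ and $\Vert\nabla^{\gamma}\cdot\Vert_2=\mu^{-1/2}\Vert\sqrt{\mu}\nabla^{\gamma}\cdot\Vert_2$), gives the base estimate $\sqrt{\mu}\Vert\nabla^{\mu,\gamma}w\Vert_2\le M_0(\Vert g\Vert_2+\Vert f\Vert_{H^{-1,0}})$, i.e. the case $s=0$.

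For the higher regularity I would argue by tangential smoothing. Applying the horizontal Fourier multiplier $\Lambda^s$ to \eqref{c2:elliptic_3} (it commutes with both $\nabla^{\mu,\gamma}\cdot$ and $\partial_z$, hence preserves the boundary conditions in the weak sense) and using the decomposition $P(\Sigma)=I_d+Q$ of \eqref{c2:decompP}, one sees that $\Lambda^s w$ solves the same weak problem with data $\tilde g=\Lambda^s g+[\Lambda^s,Q]\nabla^{\mu,\gamma}w$ and $\tilde f=\Lambda^s f$. Redoing the energy estimate at this level (rather than invoking the case $s=0$ as a black box, so that the $\mu$-weights stay coherent), the coercive term $k(\Sigma)\Vert\nabla^{\mu,\gamma}\Lambda^s w\Vert_2^2$ is controlled by the $g$- and $f$-contributions, estimated exactly as in the base case and producing $\Vert g\Vert_{H^{s,0}}$ and $C\max(1,\mu^{-1/2})\Vert f\Vert_{H^{s-1,0}}$, plus the commutator term $\big\vert\int[\Lambda^s,Q]\nabla^{\mu,\gamma}w\cdot\nabla^{\mu,\gamma}\Lambda^s w\big\vert$. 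In the non-tame regime ($s\le t_0+3/2$) a Kato--Ponce commutator estimate (in the form of \cite{david}), combined with \eqref{c2:reguQ}, the bound $\Vert Q\Vert_{L^\infty_zH^{t_0}_X}\le M_0$, and the embedding of Proposition \ref{c2:212}, bounds $\Vert[\Lambda^s,Q]\nabla^{\mu,\gamma}w\Vert_2$ by $M_0\Vert\nabla^{\mu,\gamma}w\Vert_{H^{s-1,0}}$; since $\Lambda^s$ only differentiates in $X$, the derivative falling on $Q$ is $\nabla^{\gamma}$ and not $\sqrt{\mu}\nabla^{\gamma}$, so after multiplying through by $\sqrt{\mu}$ this term is precisely $M_0\cdot\sqrt{\mu}\Vert\nabla^{\mu,\gamma}w\Vert_{H^{s-1,0}}$, which closes the induction on the integer part of $s$. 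For $s>t_0+3/2$ the non-tame product/commutator estimates (Proposition \ref{c2:B5}) are replaced by their tame versions, which produce the extra term $\vert(\epsilon\zeta,\beta b)\vert_{H^{s+1/2}}\big(\Vert g\Vert_{H^{t_0+1/2,1}}+\Vert f\Vert_{H^{t_0-1/2,0}}\big)$ via $\Vert\Lambda^s Q\Vert_2\le M_0\vert(\epsilon\zeta,\beta b)\vert_{H^{s+1/2}}$, and the intermediate range $t_0\le s\le t_0+3/2$ follows by interpolation of the two. Finally, if one wants the full $H^{s,1}$ bound (as in Proposition \ref{c2:236}), it remains to recover the normal derivative by inverting the equation for $\partial_z\big((P(\Sigma)\nabla^{\mu,\gamma}w)_z\big)$, whose coefficient $(P(\Sigma)e_z)\cdot e_z\ge k(\Sigma)$ is bounded below by coercivity: this expresses it through the already-controlled tangential quantities and through $\partial_z g_z$, which is exactly why $g$ is asked in $H^{s,1}$ rather than merely in $H^{s,0}$.

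The scheme is routine; the real work is the bookkeeping, and I expect the main obstacle to be the tracking of the $\mu$-dependence: every negative power of $\mu$ introduced when passing from $\nabla^{\mu,\gamma}$-weighted norms to ordinary Sobolev norms (through Poincaré, or through $\Vert\nabla^{\gamma}\cdot\Vert=\mu^{-1/2}\Vert\sqrt{\mu}\nabla^{\gamma}\cdot\Vert$) must be either cancelled by the explicit $\sqrt{\mu}$ on the left-hand side or swallowed into a constant depending on $\mu_{\max}$ — this is precisely what forces keeping the weighted norms throughout the induction instead of iterating the $s=0$ estimate. The second delicate point is the tame/non-tame split together with the interpolation on $t_0\le s\le t_0+3/2$, exactly as flagged in the remark preceding the statement. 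A minor technical point is the a priori regularity needed to legitimately apply $\Lambda^s$ to $w$; this is handled by the usual device of regularizing the data (or working first with horizontal difference quotients) and passing to the limit in the uniform bounds just obtained.
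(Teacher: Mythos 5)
Your overall architecture coincides with the paper's: Lax--Milgram with the coercivity \eqref{c2:pcoer} and Poincaré on the strip for existence and uniqueness, then tangential regularization (your ``apply $\Lambda^s$ after smoothing the data'' plays the role of the paper's truncated multiplier $\Lambda^s\chi(\delta\Lambda)$ tested against $(\Lambda_\delta^s)^2w$), the commutator $[\Lambda^s,Q]\nablamug w$ estimated in non-tame/tame form, interpolation on the range $t_0\leq s\leq t_0+3/2$, and the same $\mu$-bookkeeping through the weighted gradient.

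There is, however, one genuine gap: your weak formulation drops the bottom boundary term. Test functions in $H^1_{0,surf}(\S)$ vanish only at $z=0$, so integrating $-\nablamug\cdot g$ by parts against $\theta$ produces, besides $\int_\S g\cdot\nablamug\theta$, a trace term $-\int_{z=-1}g\,\theta$; the paper keeps this term in the variational formulation (it is what encodes $\partial_n w_{\vert z=-1}=0$ while treating $-\nablamug\cdot g$ as genuine divergence-form data), and it reappears at every level of the energy estimate as $\vert\Lambda^{s+1/2}w(\cdot,-1)\vert_2\,\vert\Lambda^{s-1/2}g(\cdot,-1)\vert_2$, controlled by the trace/embedding of Proposition \ref{c2:212} through $\vert\Lambda^{s-1/2}g(\cdot,-1)\vert_2\leq\Vert g\Vert_{H^{s,1}}$ together with Poincaré for the $w$-factor. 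This term is exactly why the right-hand side of the lemma carries $\Vert g\Vert_{H^{s,1}}$ rather than $\Vert g\Vert_{H^{s,0}}$. Your functional $\varphi\mapsto\int_\S g\cdot\nablamug\varphi+(f,\varphi)$ corresponds instead to the natural condition $e_z\cdot\big(P(\Sigma)\nablamug w-g\big)=0$ at $z=-1$, i.e.\ to a different boundary-value problem, and your closing remark attributes the $H^{s,1}$ requirement on $g$ to recovering $\partial_z$-regularity of $w$, which the lemma does not claim. The repair is local --- add the trace term to the weak formulation and estimate it as above --- but without it the stated estimate, and the later applications where the bottom condition is genuinely inhomogeneous ($\partial_n w_{\vert z=-1}=-g\cdot e_z$), are not reached.
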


\begin{proof}
By definition, $w$ is a variational solution to \eqref{c2:elliptic_3} if for all $\theta\in H_{0,surf}^1(\mathcal{S})$, one has 
\begin{equation}\begin{aligned}
\int_{\S}\nablamug w\cdot P\nablamug\theta &=- \int_{\S} (\nablamug\cdot g)\theta +\int_{\S} f \theta \\
&= \int_{\S} g\cdot \nablamug \theta +\int_{\S} f \theta - \int_{z=-1} g\theta. 
\end{aligned}\label{c2:bvp}\end{equation}
The existence and uniqueness follows from the coercivity of $P$ given by \eqref{c2:pcoer} and the Lax-Milgram Theorem. We now introduce $\Lambdad^{s} = \Lambda^{s} \chi(\delta\Lambda)$ for $\delta >0$ and $\chi$ a smooth and compactly supported function, equals to $1$ in a neighbourhood of zero. If $w\in H_{0,surf}^1(\S)$ is the variational solution of \eqref{c2:elliptic_3}, then $(\Lambdad^{s})^2 w$ is also in $H_{0,surf}^1(\S)$, and thus, taking $\theta= (\Lambdad^{s})^2w$ in \eqref{c2:bvp} (recall that $P=I+Q$):
\begin{align*}
\int_{\S}\nablamug \Lambdad^s w\cdot P\nablamug\Lambdad^s\theta &= \int_{\S} \Lambdad^s g\cdot \nablamug \Lambdad^s w +\int_{\S}\Lambdad^s f \Lambdad^s w- \int_{z=-1} \Lambdad^s g \Lambdad^s w \\&+\int_{\S} [Q,\Lambdad^s] \nablamug w\cdot \nablamug\Lambdad^s w.
\end{align*}
We now use Cauchy-Schwarz inequality and the coercivity of $P$ (see \eqref{c2:pcoer}) to get:
\begin{equation}
\begin{aligned} k(\Sigma) \Vert \nablamug\Lambdad^s w\Vert_2^2 &\leq \Vert \Lambdad^s g\Vert_2\Vert \nablamug \Lambdad^s  w\Vert_2+\Vert \Lambdad^{s-1} f\Vert_2 \Vert \Lambdad^{s+1} w\Vert_2+\vert \Lambdad^{s+1/2} w(.,-1)\vert_2\vert \Lambdad^{s-1/2} g\vert_2 \\&+ \Vert [Q,\Lambdad^s] \nablamug w\Vert_2\Vert \nablamug\Lambdad^s w\Vert_2.
\end{aligned}\label{c2:commut01}
\end{equation}
Since $\Lambdad^{s+1} w\in H_{0,surf}^1(\S)$, one has using Poincaré's inequality (recall that $\S =\R^d\times (-1,0)$ and $H_{0,surf}^1(\S)$ is the set of $H^1$ functions of $\S$ with vanishing trace at the surface):
\begin{equation}\begin{aligned}\Vert \Lambdad^{s+1} w\Vert_2 &\leq \Vert \Lambdad^{s} w\Vert_{H^{1,0}}\\
&\leq \frac{1}{\sqrt{\mu}} \Vert \Lambdad^s \nablamug w\Vert_{L^2}, \end{aligned}\label{c2:commut02}\end{equation} where the $\frac{1}{\sqrt{\mu}}$ factor comes from the definition of $\nablamug$. Moreover, one has for all $s\in\R$, using Proposition \ref{c2:212}: $$\vert u\vert_{H^{s-1/2}} \leq \Vert u\Vert_{H^{s,1}}$$ and thus one can write:
\begin{equation}\begin{aligned}
\vert \Lambdad^{s-1/2} g(.,-1)\vert_2 &\leq \Vert  g\Vert_{H^{s,1}}.
\end{aligned}\label{c2:commut03}\end{equation}
From now, the idea of the proof is to show a commutator estimate of the form
\begin{equation}\begin{aligned}
&\forall 0\leq s\leq t_0+3/2, &&\Vert [\Lambdad^{s},Q](\nablamug w)\Vert_2&\leq  M(s+1/2)\Vert\Lambdad^{s-\epsilon}\nablamug w\Vert_2\\
&\forall t_0+3/2\leq s, &&\Vert [\Lambdad^{s},Q](\nablamug w)\Vert_2 &\leq M(\vert(\epsilon\zeta,\beta b)\vert_{H^{s+1/2}}(\Vert g\Vert_{H^{t_0+1/2,1}}\\& && &+\Vert f\Vert_{H^{t_0-1/2,0}})\Vert\Lambdad^{s-\alpha}\nablamug w\Vert_2,
\end{aligned}\label{c2:commutQ}\end{equation} for some $\alpha >0$.

Putting together \eqref{c2:commut02}, \eqref{c2:commut03} and \eqref{c2:commutQ} into \eqref{c2:commut01}, letting $\delta$ goes to zero and using a finite induction on $s$, one gets the result of Lemma \ref{c2:238}. However the commutator estimate \eqref{c2:commutQ} is technical to obtain, and therefore we omit the proof for the sake of clarity (see \cite{david} Lemma 2.38 for details).\qquad$\Box$
\end{proof}

We now go back to the proof of $\eqref{c2:result_to_prove}$. For $0\leq s+1\leq t_0$, one has, using Lemma \ref{c2:238}:
\begin{align*}
\Vert \Lambda^{s+1}\nablamug((xv)^\mathfrak{h}-xv^\mathfrak{h})\Vert_2 &\leq M(s+1/2)\frac{1}{\sqrt{\mu}}(\Vert P(\Sigma)(\nablamug x)v^\mathfrak{h} \Vert_{H^{s+1,1}}+\Vert P(\Sigma)\nablamug x\cdot\nablamug v^\mathfrak{h}\Vert_{H^{s,0}})\\
&\leq  M(s+1/2)(1+\Vert Q\Vert_{H^{t_0+1,1}})(\Vert v^\mathfrak{h}\Vert_{H^{s+1,1}}+\Vert\nablamug v^\mathfrak{h}\Vert_{H^{s,0}}
\end{align*}
where we used the product estimate of Proposition \ref{c2:B5} to derive the last inequality, and where the $\frac{1}{\sqrt{\mu}}$ factor has been canceled by $\nablamug x$ which has a $\sqrt{\mu}$ factor (recall the definition of $\nablamug$).
Using Lemma \ref{c2:236}, we get the bound 
\begin{equation}
\Vert\nablamug v^\mathfrak{h}\Vert_{H^{s,0}} \leq \sqrt{\mu} \vert\B v\vert_{H^s}. \label{c2:commut04}
\end{equation}
To control $v^\mathfrak{h}$ in $H^{s+1,1}$ norm, we recall that $v=\partial_x\varphi$ and we notice that $$(\partial_x\varphi)^{\mathfrak{h}}-\partial_x (\varphi)^\mathfrak{h}\in H_{0,surf}^1(\S)$$ and we write 
\begin{align}
\Vert v^\mathfrak{h}\Vert_{H^{s+1,1}} \leq \Vert (\partial_x\varphi)^{\mathfrak{h}}-\partial_x (\varphi)^\mathfrak{h}\Vert_{H^{s+1,1}}+\Vert \partial_x (\varphi)^\mathfrak{h}\Vert_{H^{s+1,1}}. \label{c2:varphih}
\end{align}
To control the first term of the right hand side of \eqref{c2:varphih}, we use the Poincaré's inequality on the flat strip $\S$:
\begin{align*}
\Vert (\partial_x\varphi)^{\mathfrak{h}}-\partial_x (\varphi)^\mathfrak{h}\Vert_{H^{s+1,1}} &\leq \Vert \nablamug ((\partial_x\varphi)^{\mathfrak{h}}-\partial_x (\varphi)^\mathfrak{h}))\Vert_{H^{s,0}}. 
\end{align*}
Now, if one defines $$w= (\partial_x\varphi)^{\mathfrak{h}}-\partial_x (\varphi)^\mathfrak{h}$$ then $w$ satisfies the following boundary problem:

\begin{equation}\begin{cases}  \nablamug\cdot(P(\Sigma)\nablamug w) = -\nablamug\cdot g\\
w_{z=0} = 0,\qquad \partial_n w_{z=-1} = -g\cdot e_z
\end{cases}\end{equation}
with $g = [P(\Sigma),\partial_x]\nablamug \varphi^\mathfrak{h}$, and $e_z$ the unit normal vector in the vertical direction. Adapting the proof of Lemma \ref{c2:238} (see also Lemma 2.38 in \cite{david}), one can prove 
$$\Vert\nablamug w\Vert_{H^{s,0}} \leq M(s+1/2) \Vert \nablamug \varphi^\mathfrak{h}\Vert_{H^{s}}$$ and using Proposition \ref{c2:236}, one finally gets
\begin{equation}
\Vert (\partial_x\varphi)^{\mathfrak{h}}-\partial_x (\varphi)^\mathfrak{h}\Vert_{H^{s+1,1}} \leq M(s+1/2)\vert\B\varphi\vert_{H^s}.\label{c2:commut05}
\end{equation}
To control the second term of the rhs of \eqref{c2:varphih}, one uses Proposition \ref{c2:236} again:
\begin{align}
\Vert \partial_x (\varphi)^\mathfrak{h}\Vert_{H^{s+1,1}} &\leq \frac{1}{\sqrt{\mu}}\Vert\nablamug\varphi^\mathfrak{h}\Vert_{H^{s+1,1}}\nonumber\\
&\leq M(s+1/2)\vert\B\varphi\vert_{H^{s+1}}.\label{c2:commut06}\end{align}

Putting together \eqref{c2:commut05} and \eqref{c2:commut06} into \eqref{c2:varphih}, one gets
\begin{equation}
\vert v^\mathfrak{h}\vert_{H^{s+1,1}} \leq M(s+1/2) \vert \B\varphi\vert_{H^{s+1}}.\label{c2:commut07}
\end{equation}
Putting together \eqref{c2:commut04} and \eqref{c2:commut07}, we proved:
$$\Vert \Lambda^{s+1}\nablamug((xv)^\mathfrak{h}-xv^\mathfrak{h})\Vert_2\leq M(s+1/2)\vert\B\varphi\vert_{H^{s+1}}$$ which is the desired result \eqref{c2:result_to_prove}. It concludes the proof of the first point of Proposition \ref{c2:commut_x}.\par\vspace{\baselineskip}

The proof of the second point of Proposition \ref{c2:commut_x} only requires a small adaptation of the proof above. The only technical change is the control of $\Vert v^\mathfrak{h}\Vert_{H^{s+1,1}}$. We write, with $v=f\partial_x\varphi$:
\begin{equation}
\Vert v^\mathfrak{h}\Vert_{H^{s+1,1}} \leq \Vert (f\partial_x\varphi)^\mathfrak{h}-f(\partial_x\varphi )^\mathfrak{h}\Vert_{H^{s+1,1}} + \Vert f(\partial_x \varphi)^\mathfrak{h}\Vert_{H^{s+1,1}} \label{c2:split101}
\end{equation}

The second term of the right hand side of \eqref{c2:split101} is controlled using Proposition \ref{c2:B5}, and the control of $(\partial_x \varphi)^\mathfrak{h}$ proved above. To control the first term of the right hand side of \eqref{c2:split101}, one remarks that $w = (f\partial_x\varphi)^\mathfrak{h}-f(\partial_x\varphi )^\mathfrak{h}\in H^1_{0,surf}(\S)$ solves the following boundary problem:

\begin{equation}\begin{cases}  \nablamug\cdot(P(\Sigma)\nablamug w) = -\nablamug fP(\Sigma)\nablamug (\partial_x\varphi)^\mathfrak{h}-\nablamug\cdot(P(\Sigma)(\partial_x\varphi)^\mathfrak{h}\nablamug f)\\
w_{z=0} = 0,\qquad \partial_n w_{z=-1} = -P(\Sigma)(\partial_x\varphi^\mathfrak{h})\nablamug\zeta \cdot e_z
\end{cases}\end{equation}
and we use the Poincaré's inequality on the flat strip $\S$ to control $\Vert (f\partial_x\varphi)^\mathfrak{h}-f(\partial_x\varphi )^\mathfrak{h}\Vert_{H^{s+1,1}}$ by $\Vert \nablamug w\Vert_{H^{s,0}}$, and adapt the proof of Lemma \ref{c2:238} above to get the control of this latter term.

 $\Box$
\end{proof}

\subsection{Local existence in weighted Sobolev Spaces}
We prove here an existence result for the Water-Waves equation in weighted Sobolev spaces (see also \cite{nguyen2015pseudo} for another use of weighted spaces for the Water-Waves). We recall that $x$ denotes the identity of $\R^d$, and we define, for all $N\geq 2$ the energy  $\E_x^N$  by $$\E_x^N =  \E^N(\zeta,\psi)+\sum_{\alpha\in\N^{d},1\leq \vert\alpha\vert \leq N-2} \vert x\zetaa\vert_2^2+\vert\B x\psia\vert_2^2$$ where $\E^N$ is the standard energy for the Water-Waves equations given by \eqref{c2:standardE}.
\begin{theorem}\label{c2:wlocal}
Let us consider the assumptions of Theorem \ref{c2:uniform_result}, and then consider $T>0$ and $(\zeta,\psi)$ the unique solution provided by the theorem on $[0;\frac{T}{\epsilon\vee\beta}]$ of the Water-Waves equation \eqref{c2:ww_equation1}. If $(\zeta^0,\psi^0)\in \mathcal{E}^N_x$, then one has $$(\zeta,\psi)\in L^\infty([0;\frac{T}{\epsilon\vee\beta}],\mathcal{E}^N_x),$$ with $$(\zeta,\psi)_{L^\infty([0;\frac{T}{\epsilon\vee\beta}],\mathcal{E}^N_x)} \leq C_2,$$
where $C_2$ is a constant of the form $C_2=C(\mathcal{E}^N(U^0),\frac{1}{h_{min}},\frac{1}{a_0})$ with $C$ a non decreasing continuous function of its arguments.\end{theorem}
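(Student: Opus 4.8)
The plan is to re-run, for the weighted unknowns $x\zetaa$ and $x\psia$, the symmetrised energy estimate that underlies Theorem~\ref{c2:uniform_result}, the only genuinely new ingredient being the commutator estimate of Proposition~\ref{c2:commut_x}. Fix once and for all the solution $(\zeta,\psi)$ on $[0;\frac{T}{\epsilon\vee\beta}]$ given by Theorem~\ref{c2:uniform_result}; all the quantities controlled by $\E^N$ — in particular $\vert\zeta\vert_{H^{t_0+2}}$, $\vert b\vert_{H^{t_0+2}}$, $\vert\B\psi\vert_{H^{t_0+3/2}}$, $\vert\zeta\vert_{H^N}$, $\vert\nablag\wsurf\vert_{H^{t_0+1/2}}$ and, once $N$ is taken large enough, every norm used below — are bounded on this interval by a constant $C_2=C(\E^N(U^0),\frac1{h_{min}},\frac1{a_0})$. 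Since moreover $\mu\leq\mu_{\max}$, all the functions $M(\cdot),M_0$ of Proposition~\ref{c2:commut_x} are $\leq C_2$ and, for instance, $\mu M(1/2)\leq C_2$.

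First I would recall from \cite{david} the quasilinearisation of \eqref{c2:ww_equation1}: for $1\leq\vert\alpha\vert\leq N$ the Alinhac good unknowns solve
\begin{equation*}\left\{\begin{aligned}&\dt\zetaa-\frac{1}{\mu}\G\psia=F_\alpha,\\ &\dt\psia+\rt\,\zetaa+\epsilon\Vd\cdot\nablag\psia=G_\alpha,\end{aligned}\right.\end{equation*}
where the remainders $F_\alpha,G_\alpha$ are finite sums of products, each summand containing one factor among the $\zeta_{(\alpha')},\psi_{(\alpha')}$ with $\vert\alpha'\vert\leq N-2$ and the remaining factors bounded in $\Wuf$ by Sobolev embedding, so that $\vert F_\alpha\vert_2+\vert\B G_\alpha\vert_2\leq C(\E^N)\leq C_2$. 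Multiplying both equations by one coordinate $x$ of $\R^d$, the weighted unknowns satisfy
\begin{equation*}\left\{\begin{aligned}&\dt(x\zetaa)-\frac{1}{\mu}\G(x\psia)=-[\tfrac{1}{\mu}\G,x]\psia+xF_\alpha,\\ &\dt(x\psia)+\rt(x\zetaa)+\epsilon\Vd\cdot\nablag(x\psia)=\epsilon(\Vd\cdot\nablag x)\psia+xG_\alpha,\end{aligned}\right.\end{equation*}
the term $\epsilon(\Vd\cdot\nablag x)\psia$ being of order zero since $x$ is a coordinate. I would then perform the usual symmetrised estimate on this linear system: with $E_x^\alpha=(\rt\,x\zetaa,x\zetaa)_2+\frac{1}{\mu}(\G(x\psia),x\psia)_2$, equivalent to $\vert x\zetaa\vert_2^2+\vert\B(x\psia)\vert_2^2$ uniformly in $\mu$ (since $\frac{1}{\mu}(\G v,v)_2\simeq\vert\B v\vert_2^2$ by \eqref{c2:defp} and $\rt\geq a_0$), differentiating in time, using the two equations, cancelling the off-diagonal contributions by the self-adjointness of $\G$ and absorbing those of $\dt\rt$, of the transport term and of $[\dt,\G]$ exactly as in the unweighted case, one arrives at
\begin{equation*}\frac{d}{dt}\E_x^N\leq C_2\,\E_x^N+C_2\sum_{1\leq\vert\alpha\vert\leq N-2}\big(\vert xF_\alpha\vert_2+\vert\B(xG_\alpha)\vert_2+\vert[\tfrac{1}{\mu}\G,x]\psia\vert_2\big).\end{equation*}

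The terms $\vert xF_\alpha\vert_2$ and $\vert\B(xG_\alpha)\vert_2$ are treated by letting the weight $x$ fall on the highest–order factor of each product — which is then bounded by $\E_x^N$ — the remaining factors being bounded in $\Wuf$ through $\E^N$; this gives $\leq C_2(1+\E_x^N)$. For the commutator term I would use $\vert\alpha\vert\geq1$: choosing $j$ with $\alpha_j\geq1$ and setting $\tilde\alpha=\alpha-e_j$, so that $\psia=\partial_{x_j}(\partial^{\tilde\alpha}\psi)-\epsilon\wsurf\,\partial_{x_j}(\partial^{\tilde\alpha}\zeta)$,
\[[\tfrac{1}{\mu}\G,x]\psia=[\tfrac{1}{\mu}\G,x]\partial_{x_j}(\partial^{\tilde\alpha}\psi)-\epsilon[\tfrac{1}{\mu}\G,x]\big(\wsurf\,\partial_{x_j}(\partial^{\tilde\alpha}\zeta)\big).\]
The first estimate of Proposition~\ref{c2:commut_x} (with $s=-1/2$, $\varphi=\partial^{\tilde\alpha}\psi$) bounds the first term by $\mu M(1/2)\vert\B\partial^{\tilde\alpha}\psi\vert_{H^{1}}\leq C_2$, and the second estimate of Proposition~\ref{c2:commut_x} (with $s=-1/2$, $f=\wsurf$, $\varphi=\partial^{\tilde\alpha}\zeta$) bounds the second term by $\epsilon\mu M(1/2)\vert\nablag\wsurf\vert_{H^{t_0+1/2}}\vert\B\partial^{\tilde\alpha}\zeta\vert_{H^{1}}\leq C_2$, the hypotheses $\zeta,b\in H^{t_0+2}$ and $\nablag\wsurf\in H^{t_0+1/2}$ being consequences of $\E^N\leq C_2$ for $N$ large. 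Collecting everything gives $\frac{d}{dt}\E_x^N\leq C_2(1+\E_x^N)$ on $[0;\frac{T}{\epsilon\vee\beta}]$, and since $\E_x^N(0)<\infty$, Gronwall's lemma yields $\E_x^N(t)\leq C_2$ on the whole interval after enlarging $C_2$.

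The main obstacle, and what makes this more than a cosmetic variant of the unweighted estimate, is precisely the commutator term: Proposition~\ref{c2:commut_x} only controls $[\tfrac{1}{\mu}\G,x]\partial_x(\cdot)$ and not $[\tfrac{1}{\mu}\G,x](\cdot)$ (recall that in the Dirichlet–Neumann problem one controls $\nablamug\Phi$, not $\Phi$), so the argument hinges on systematically factoring an explicit derivative out of $\psia$ — which is exactly why $\E_x^N$ only weights the good unknowns of order $\vert\alpha\vert\geq1$ — and, likewise, out of the leading factor of each summand of $F_\alpha,G_\alpha$ before applying the weight. Keeping all the attendant product and commutator estimates tame, which is where the gap $\vert\alpha\vert\leq N-2$ versus $\vert\alpha\vert\leq N$ between the two parts of $\E_x^N$ is used, is the bulk of the remaining (routine) work, but it follows the template of \cite{david} Chapter~4 once Proposition~\ref{c2:commut_x} is available.
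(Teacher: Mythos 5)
Your proposal follows essentially the same route as the paper: the same weighted energy $E^\alpha_x\simeq\vert x\zetaa\vert_2^2+\vert\B x\psia\vert_2^2$ for $1\leq\vert\alpha\vert\leq N-2$, the same quasilinearized system for the Alinhac good unknowns with weighted remainders controlled by $C(\E^N_x)$, the same symmetry cancellations of the order-one and transport contributions, the same key step of factoring a derivative out of $\psia=\partial^\alpha\psi-\epsilon\wsurf\partial^\alpha\zeta$ so that both points of Proposition \ref{c2:commut_x} apply to the commutator $[\frac{1}{\mu}\G,x]\psia$, and Gronwall to conclude. The only differences are cosmetic (you multiply the quasilinear system by $x$ before the energy estimate instead of handling $[\G,x]$ inside the pairing, and you put the Rayleigh--Taylor weight explicitly in the energy), so the argument is correct and matches the paper's proof.
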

\begin{remark}\begin{itemize}[label=--,itemsep=0pt]\item Note that there are less space derivatives for the weighted norms $\vert x\zetaa\vert_2^2+\vert\B x\psia\vert_2^2$ than for the "Sobolev" norms $\E^N$. This is due to the presence of commutators of the form $[\G,x]\psia$ in the evolution equation for $\psia$, which are of order $1$ (at least) in $\psia$.
\item Note also that we control $\B\psia,\zetaa$ only for  $\vert\alpha\vert\geq 1$. This is due to the fact that we only control terms of the form $x\partial_x \varphi$.  \end{itemize}
\end{remark}
\begin{proof}
The proof is an adaptation of the proof of the Theorem \ref{c2:uniform_result} (see \cite{david} Chapter 4 for a full proof). Therefore, we only give the main ideas and insist on the specificity of using weights. Considering the result given by Theorem \ref{c2:uniform_result}, we only need to recover estimates for weighted norms (estimates for the "classical Sobolev" norms of $\E^N$ are done in the proof of the local existence result of \cite{david}). We recall (see for instance \cite{david} Chapter 3 for reference) that one has:
\begin{equation*}
(\psi,\frac{1}{\mu}G\psi)_2 \leq M_0\vert\mathfrak{P}\psi\vert_2^2\quad\text{ and }\quad \vert\mathfrak{P}\psi\vert_2^2 \leq M_0(\psi,\frac{1}{\mu}G\psi)_2 \label{c2:equivanorme}
\end{equation*}
 Therefore, we set, for all $0\leq\vert\alpha\vert\leq N-2$: $$E^\alpha = \frac{1}{2\mu}(\G x\psia,x\psia)_2+\frac{1}{2}(x\zetaa,x\zetaa)$$ and look for a control of $E^\alpha$.  We now differentiate $E^\alpha$ with respect to time and get, using the symmetry of $\G$:
$$\frac{d}{dt}E^\alpha = (\G x\psia,\partial_t x\psia)_2+(d\G(\epsilon\partial_t\zeta)x \psia,x\psia)_2+(\partial_t x\zetaa,x\zetaa)_2+\frac{1}{2}(x\zetaa,(\dt\rt)x\zetaa)_2.$$ We now need an equation in terms of $\zetaa,\psia$. To this purpose, one computes $\partial^\alpha$ of the equations \eqref{c2:ww_equation1}. One gets in the first equation a term of the form
$$\partial^\alpha \G[\epsilon\zeta,\beta b]\psi = \G[\epsilon\zeta,\beta b]\partial^\alpha \psi + \sum_{\nu<\alpha,\delta_1+...+\delta_m+l_1+...+l_n+\nu = \alpha} d\G(\partial^{\delta_1} \epsilon\zeta,...,\partial^{\delta_m} \epsilon\zeta,\partial^{l_1} \beta b,...,\partial^{l_n}\beta b)\partial^\nu \psi$$ where $d\G$ denotes the shape derivative of $\G[\epsilon\zeta,\beta b]$ with respect to the bottom $b$ and the surface $\zeta$. We therefore obtain, after computations, a system of the form (see \cite{david} Chapter 4 for details):
\begin{equation}
\left\{\begin{aligned}\partial_t\zetaa+\epsilon \Vu\cdot\nablag\zetaa-\frac{1}{\mu}\G\psia &= R^\alpha \\
\partial_t \psia +\rt \zetaa+\epsilon\Vu\cdot\nablag\psia = S^\alpha
\end{aligned}\right.\label{c2:quasisystem}
\end{equation}
with \begin{equation}\vert x R^\alpha\vert_2 + \vert\B x S^\alpha\vert_2 \leq C(\E^N_x)\label{c2:control_reste}\end{equation} with $C$ a continuous function of its arguments.   In order to get a control of the form \eqref{c2:control_reste}, one can adapt the proof of the control for the shape derivatives of $\G$ given in Proposition 3.28 of \cite{david} (we do not detail this proof here). We therefore have, replacing $\partial_t(\zetaa,\psia)$ by their expression given by \eqref{c2:quasisystem}:

\begin{equation}\begin{aligned}
\frac{d}{dt}E^\alpha&= \frac{1}{\mu}(\G x\psia,\rt x\zetaa)_2-\frac{1}{\mu}(\G x\psia,\rt x\zetaa)_2\\&+\epsilon(x\rt\zetaa,x\Vu\cdot\nablag\zetaa)_2+\frac{\epsilon}{\mu}(\G x\psia,x\Vu\cdot\nablag\psia)_2\\&+ \frac{1}{\mu}(d\G(\epsilon\partial_t\zeta)x\psia,x\psia)_2+\frac{1}{\mu}([\G,x]\psia,x\rt\zetaa)_2.
\end{aligned}\label{c2:derivee_energiea}\end{equation}
The first two terms of \eqref{c2:derivee_energiea} are the one of order $1$ with respect to the unknowns $x\zetaa,x\psia$ but cancel one another, thanks to the symmetry of the equation.\par\vspace{\baselineskip} The two terms of the second line of \eqref{c2:derivee_energiea} are of contributions of order $0$ to the energy estimate, with respect to the unknowns, thanks to the symmetry. More precisely, one computes, integrating by parts:
\begin{align*}
(x\rt\zetaa,x\Vu\cdot\nablag\zetaa)_2 &= \sum_{j=1}^d ((x\rt\zetaa,x\Vu_j\partial_j\zetaa)_2 \\
&= - \sum_{j=1}^d (x \Vu_j\partial_j\zetaa ,x\rt\zetaa)_2 - (\partial_j(\Vu_j \rt)x\zetaa,x\zetaa)_2-((\partial_j x)\Vu_j\rt \zetaa,x\zetaa)_2
\end{align*}
and therefore one has 
\begin{equation}\label{c2:estimation01}
(x\rt\zetaa,x\Vu\cdot\nablag\zetaa)_2 = -\frac{1}{2}(\partial_j(\Vu_j \rt)x\zetaa,x\zetaa)_2-\frac{1}{2}((\partial_j x)\Vu_j\rt \zetaa,x\zetaa)_2.
\end{equation}
Using Proposition \ref{c2:314}, it is possible to prove that $\vert \Vu_j\vert_{W^{1,\infty}}+\vert\rt\vert_{W^{1,\infty}}\leq \E^N$ and therefore one gets from \eqref{c2:estimation01} the control:
\begin{equation}\begin{aligned}
\vert(x\rt\zetaa,x\Vu\cdot\nablag\zetaa)_2\vert &\leq C(\E^N)(\vert x\zetaa\vert_2^2+\vert \zetaa\vert_2\vert x\zetaa\vert_2 \\
&\leq C(\E^N)E^\alpha
\label{c2:estimation02}\end{aligned}
\end{equation}
where $C$ is continuous and non decreasing.
For the control of the second term of the second line of \eqref{c2:derivee_energiea}, one writes:
\begin{align*}
\frac{1}{\mu}(\G x\psia,x\Vu\cdot\nablag\psia)_2&=\frac{1}{\mu}(\G x\psia,\Vu\cdot(x\psia))_2-\frac{1}{\mu}(\G x\psia,(\Vu\cdot\nablag x)\psia)_2.
\end{align*}
We use Proposition \ref{c2:329} to write (recall the notations of $M$ given by \eqref{c2:notationM}):
\begin{align*}
\vert \frac{1}{\mu}(\G x\psia,\Vu\cdot(x\psia))_2\vert \leq M\vert \Vd\vert_{W^{1,\infty}}\vert\B x\psi\vert_2^2
\end{align*}
and again, using the Proposition \eqref{c2:314} one can control the $W^{1,\infty}$ norm of $\Vd$ by the energy and get 
\begin{equation}\vert\frac{1}{\mu}(\G x\psia,\Vu\cdot(x\psia))_2\vert \leq C(\E^N)E^\alpha.
\label{c2:estimation03}
\end{equation}
We now use Proposition \ref{c2:318} with $s=1$ to compute:
\begin{align*}
\vert \frac{1}{\mu} (\G x\psia,(\Vu\cdot\nablag x)\psia)_2 \vert &\leq \mu M\vert\B x\psia\vert_2\vert (\Vu\cdot\nablag x)\psia\vert_2
\end{align*}
and one can prove, using the definition of $\B$ and standard Sobolev estimates:
\begin{align*}
\vert (\Vu\cdot\nablag x)\psia\vert_2 \leq \vert\Vu\vert_{H^{t_0}}\vert\B\psia\vert_2
\end{align*}
and therefore, using Proposition \eqref{c2:314} again to control $\vert\Vu\vert_{H^{t_0}}$ by the energy, one finally gets 
\begin{equation}
\vert \frac{1}{\mu} (\G x\psia,(\Vu\cdot\nablag x)\psia)_2\vert \leq C(\E^N)E^\alpha.\label{c2:estimation04}
\end{equation}
Putting together \eqref{c2:estimation03} and \eqref{c2:estimation04}, one proved 
\begin{equation}
\vert\frac{1}{\mu}(\G x\psia,x\Vu\cdot\nablag\psia)_2\vert \leq C(\E^N)E^\alpha. \label{c2:estimation05}
\end{equation}
The first term of the third line of \eqref{c2:derivee_energiea} is estimated by using Proposition \ref{c2:318}. The only non trivial remaining term to control in \eqref{c2:derivee_energiea} is the last one, which is the commutator $\frac{1}{\mu}([\G,x]\psia,x\rt\zetaa)_2$. Recall that $\vert\alpha\vert >1$ and that:
 \begin{align*}
\psia = \partial^\alpha\zeta-\epsilon\underline{w}\partial^\alpha\psi
 \end{align*}
 and one gets therefore, using Proposition \ref{c2:commut_x}, one can control both of these terms by $C(\mathcal{E}^N)E^\alpha$. One can obtain by summing on all $\alpha,1\leq \vert\alpha\vert\leq N-1$ the following energy estimate:
$$\frac{d}{dt}\E_x^N \leq C(\E^N)\E^N_x$$ with $C$ a continuous function of its arguments. Using a Gronwall's Lemma, one can conclude and end the proof of the Theorem.\qquad$\Box$.

\end{proof}

\begin{appendix}
\section{Estimates on the flat strip $\S$}
We recall the notation $a\vee b = \max(a,b)$ and we define $L^\infty H^s = L^\infty((-1;0);H^s(\R^d))$ and use the notation $\langle a\rangle_{s>t_0} = a$ if $s> t_0$ and else $0$ .
\begin{proposition}\label{c2:B5} Let $t_0>d/2$. If $s\geq -t_0$, $f,g\in L^\infty H^{t_0}\cap H^{s,0}$, one has $fg\in H^{s,0}$ and $$\norme{fg}_{H^{s,0}} \leq C\norme{f}_{L^\infty H^{t_0}}\norme{g}_{H^{s,0}} + \langle\norme{f}_{H^{s,0}}\norme{g}_{L^\infty H^{t_0}}\rangle_{s>t_0}.$$
\end{proposition}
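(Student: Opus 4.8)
The plan is to reduce the statement to the classical bilinear Sobolev estimate on $\R^d$, applied slicewise in the vertical variable $z\in(-1,0)$, and then to integrate in $z$. Since $\Lambda^s=(1-\Delta)^{s/2}$ acts only on the horizontal variable $X\in\R^d$, the definition of the $H^{s,0}$ norm gives
\[
\norme{fg}_{H^{s,0}}^2=\norme{\Lambda^s(fg)}_2^2=\int_{-1}^{0}\big|f(\cdot,z)g(\cdot,z)\big|_{H^s(\R^d)}^2\,dz .
\]
Thus it suffices to bound $\big|f(\cdot,z)g(\cdot,z)\big|_{H^s(\R^d)}$, for almost every fixed $z$, by the appropriate product of $H^{t_0}$ and $H^s$ norms of the slices $f(\cdot,z),g(\cdot,z)$, then square, integrate in $z$, take square roots, and estimate $\sup_z|f(\cdot,z)|_{H^{t_0}}\le\norme{f}_{L^\infty H^{t_0}}$, $\sup_z|g(\cdot,z)|_{H^{t_0}}\le\norme{g}_{L^\infty H^{t_0}}$.

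First I would recall the scalar ingredient: for $t_0>d/2$, all $u\in H^{t_0}(\R^d)$ and $v\in H^s(\R^d)$,
\[
|uv|_{H^s}\le C|u|_{H^{t_0}}|v|_{H^s}\qquad(-t_0\le s\le t_0),
\]
\[
|uv|_{H^s}\le C\big(|u|_{H^{t_0}}|v|_{H^s}+|u|_{H^s}|v|_{H^{t_0}}\big)\qquad(s>t_0),
\]
with $C=C(d,s,t_0)$. These are classical (see e.g. \cite{david}, Appendix~B) and are proved via a Littlewood--Paley/paraproduct decomposition $uv=T_uv+T_vu+R(u,v)$, where the paraproducts and the remainder are controlled with the embedding $H^{t_0}\hookrightarrow L^\infty$ (valid since $t_0>d/2$); the lower restriction $s\ge-t_0$ is precisely what keeps the dyadic series defining the remainder summable, and when $s>t_0$ the ``symmetric'' paraproduct $T_vu$ must instead be estimated with $v$ as the rough factor, which produces the second (Moser-type) term. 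The borderline case $s=-t_0$ is handled by duality from $H^{t_0}\cdot H^{t_0}\subset H^{t_0}$.

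With these in hand the conclusion is immediate. For $-t_0\le s\le t_0$, apply the first inequality with $u=f(\cdot,z)$, $v=g(\cdot,z)$, bound $|f(\cdot,z)|_{H^{t_0}}\le\norme{f}_{L^\infty H^{t_0}}$, square, and integrate in $z$ to obtain $\norme{fg}_{H^{s,0}}\le C\norme{f}_{L^\infty H^{t_0}}\norme{g}_{H^{s,0}}$, which is the claim (the bracketed term being absent since $s\le t_0$). For $s>t_0$, apply the second inequality, bound the $H^{t_0}$ norms of the slices by the $L^\infty H^{t_0}$ norms, use $(a+b)^2\le 2(a^2+b^2)$, integrate in $z$, and take square roots to get
\[
\norme{fg}_{H^{s,0}}\le C\big(\norme{f}_{L^\infty H^{t_0}}\norme{g}_{H^{s,0}}+\norme{f}_{H^{s,0}}\norme{g}_{L^\infty H^{t_0}}\big),
\]
which is the stated inequality with the bracket active; in particular $fg\in H^{s,0}(\mathcal{S})$.

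I expect no genuine obstacle here: the only mildly delicate point is the scalar estimate in the negative range $-t_0\le s<0$, where one must track carefully which factor is placed in $L^\infty$ and check that the low--high paraproduct and the high--high (resonant) interactions do not cost more than $s+t_0\ge 0$ derivatives when summing the dyadic pieces. Everything else is a routine application of Fubini's theorem in the vertical variable, so the proof amounts to invoking the classical product rule and integrating.
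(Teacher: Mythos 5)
Your proof is correct. Note that the paper itself gives no argument for this proposition: it is stated in Appendix A as a recalled technical estimate (it is Proposition B.5--type material from \cite{david}, to which the paper implicitly defers), so there is no internal proof to compare against. Your slicewise reduction — using that $\Lambda^s$ acts only in the horizontal variable, so $\norme{fg}_{H^{s,0}}^2=\int_{-1}^0\vert f(\cdot,z)g(\cdot,z)\vert_{H^s(\R^d)}^2\,dz$, then applying for a.e.\ $z$ the classical estimates $\vert uv\vert_{H^s}\leq C\vert u\vert_{H^{t_0}}\vert v\vert_{H^s}$ for $-t_0\leq s\leq t_0$ (with the endpoint $s=-t_0$ by duality from the algebra property of $H^{t_0}$) and the tame estimate for $s>t_0$, and finally bounding the $H^{t_0}$ norms of the slices by the $L^\infty H^{t_0}$ norms before integrating in $z$ — is exactly the standard route for such strip estimates and yields the stated inequality, including the correct placement of the bracketed term only when $s>t_0$.
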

The following Proposition states a $L^\infty$ embedding result for the Beppo-Levi spaces:
\begin{proposition}\label{c2:212}
For all $s\in\R$: \begin{enumerate}[label=(\arabic*),itemsep=0pt]
\item The mapping $u\mapsto u_{\vert z=0}$ extends continuously from $H^{s+1,1}$ to $H^{s+1/2}(\R^d)$.
\item The space $H^{s+1/2,1}$ is continuously embedded in $L^\infty H^s$.

\end{enumerate}
\end{proposition}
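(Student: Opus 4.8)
The plan is to reduce both assertions to a single one-dimensional trace inequality applied in each horizontal frequency $\xi$ separately. Writing $\widehat{u}(\xi,z)$ for the partial Fourier transform of $u$ in the variable $X\in\R^d$ and $\langle\xi\rangle=(1+\vert\xi\vert^2)^{1/2}$, Plancherel's identity gives $\norme{\Lambda^{\sigma}\partial_z^j u}_2^2 = \int_{\R^d}\langle\xi\rangle^{2\sigma}\int_{-1}^0\vert\partial_z^j\widehat{u}(\xi,z)\vert^2\,dz\,d\xi$, so it suffices to bound, for fixed $\xi$, the quantity $\vert\widehat{u}(\xi,z_0)\vert^2$ in terms of $\int_{-1}^0\vert\widehat{u}(\xi,z)\vert^2\,dz$ and $\int_{-1}^0\vert\partial_z\widehat{u}(\xi,z)\vert^2\,dz$ with the appropriate powers of $\langle\xi\rangle$. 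The elementary fact is that for any $f\in H^1((-1,0))$ and $z_0\in[-1,0]$,
$$\vert f(z_0)\vert^2 \le \int_{-1}^0\vert f(z)\vert^2\,dz + 2\int_{-1}^0\vert f(z)\vert\,\vert f'(z)\vert\,dz,$$
obtained by integrating the identity $\vert f(z_0)\vert^2=\vert f(z)\vert^2-\int_z^{z_0}\frac{d}{dz'}\vert f(z')\vert^2\,dz'$ over $z\in(-1,0)$, an interval of length one.

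For (1), apply this with $f(z)=\widehat{u}(\xi,z)$ and multiply by $\langle\xi\rangle^{2s+1}$. Since $\langle\xi\rangle\ge 1$ the first term is at most $\int_{-1}^0\langle\xi\rangle^{2(s+1)}\vert\widehat{u}\vert^2\,dz$, and in the second term Young's inequality gives $2\langle\xi\rangle^{2s+1}\vert\widehat{u}\vert\,\vert\partial_z\widehat{u}\vert\le \langle\xi\rangle^{2(s+1)}\vert\widehat{u}\vert^2+\langle\xi\rangle^{2s}\vert\partial_z\widehat{u}\vert^2$. Integrating in $z$ and then in $\xi$ at $z_0=0$ yields $\vert u_{\vert z=0}\vert_{H^{s+1/2}}^2 \le C\big(\norme{\Lambda^{s+1}u}_2^2+\norme{\Lambda^s\partial_z u}_2^2\big)$, which is the claim. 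For (2), the same computation with an arbitrary $z_0\in(-1,0)$ and the weight $\langle\xi\rangle^{2s}$, splitting $2\langle\xi\rangle^{2s}\vert\widehat{u}\vert\,\vert\partial_z\widehat{u}\vert\le\langle\xi\rangle^{2s+1}\vert\widehat{u}\vert^2+\langle\xi\rangle^{2s-1}\vert\partial_z\widehat{u}\vert^2$, gives $\vert u(\cdot,z_0)\vert_{H^s}^2\le C\big(\norme{\Lambda^{s+1/2}u}_2^2+\norme{\Lambda^{s-1/2}\partial_z u}_2^2\big)$ with $C$ independent of $z_0$; taking the supremum over $z_0$ gives the embedding into $L^\infty H^s$.

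Finally I would take care of the functional-analytic details: the inequalities above are first established for $u$ in a dense subspace of $H^{s+1,1}$ (resp. $H^{s+1/2,1}$) — for instance functions that are Schwartz in $X$ and smooth in $z$ — where the traces $\widehat{u}(\xi,z_0)$ are genuine pointwise values; the resulting uniform bounds then show that $u\mapsto u_{\vert z=0}$ and the evaluation maps $z_0\mapsto u(\cdot,z_0)$ extend continuously, and in fact $u$ admits a representative in $C([-1,0];H^s(\R^d))$. There is no serious obstacle here: the only point requiring a little care is the bookkeeping of the half-integer powers $\langle\xi\rangle^{\pm 1/2}$ in the Young step, and the passage from the dense class to general $u$; the statement is a standard anisotropic trace/interpolation estimate of Lions–Magenes type, and the proof is essentially the Fourier-side computation above together with this density argument.
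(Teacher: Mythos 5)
Your proof is correct, and it is the standard argument for this statement: the paper itself does not prove Proposition \ref{c2:212} but quotes it from \cite{david} (Chapter 2 there), where the proof is exactly this frequency-by-frequency one-dimensional trace inequality combined with Young's inequality to distribute the half powers of $\langle\xi\rangle$ and a density argument. Your bookkeeping of the weights matches the norms $\norme{\Lambda^{s+1}u}_2+\norme{\Lambda^{s}\partial_z u}_2$ and $\norme{\Lambda^{s+1/2}u}_2+\norme{\Lambda^{s-1/2}\partial_z u}_2$ defining $H^{s+1,1}$ and $H^{s+1/2,1}$, and the uniformity in $z_0$ indeed yields the $L^\infty H^s$ embedding, so nothing further is needed.
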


\section{The Dirichlet Neumann Operator}\label{c2:appendixA}
Here are for the sake of convenience some technical results about the Dirichlet Neumann operator, and its estimates in Sobolev norms. See \cite{david} Chapter 3 for complete proofs. The first two  propositions give a control of the Dirichlet-Neumann operator. 

\begin{proposition}\label{c2:314}
Let $t_0$>d/2, $0\leq s \leq t_0+3/2$ and $(\zeta,\beta)\in H^{t_0+1}\cap H^{s+1/2}(\mathbb{R}^d)$ such that  \begin{equation*} \exists h_0>0,\forall X\in\mathbb{R}^d,  \epsilon\zeta(X)-\beta b(X) +1 \geq h_0.\end{equation*} 
\begin{enumerate}[label = ( \arabic*)]
\item\quad The operator $G$ maps continuously $\overset{.}H{}^{s+1/2}(\mathbb{R}^d)$ into $H{}^{s-1/2}(\mathbb{R}^d)$ and one has 
\begin{equation*}
\vert G\psi\vert_{H^{s-1/2}} \leq \mu^{3/4} M(s+1/2) \vert\mathfrak{P}\psi\vert_{H^s},
\end{equation*}
where $M(s+1/2)$ is a constant of the form $C(\frac{1}{h_0},\vert\zeta\vert_{H^{t_0+1}},\vert b\vert_{H^{t_0+1}},\vert\zeta\vert_{H^{s+1/2}},\vert b\vert_{H^{s+1/2}})$.

\item \quad The operator $G$ maps continuously $\overset{.}H{}^{s+1}(\mathbb{R}^d)$ into $H{}^{s-1/2}(\mathbb{R}^d)$ and one has 
\begin{equation*}
\vert G\psi\vert_{H^{s-1/2}} \leq \mu M(s+1) \vert\mathfrak{P}\psi\vert_{H^{s+1/2}},
\end{equation*}
where $M(s+1)$ is a constant of the form $C(\frac{1}{h_0},\vert\zeta\vert_{H^{t_0+1}},\vert b\vert_{H^{t_0+1}},\vert\zeta\vert_{H^{s+1}},\vert b\vert_{H^{s+1}})$.
\end{enumerate}

Moreover, it is possible to replace $G\psi$ by $\underline{w}$ in the previous result, where $\underline{w} = \frac{G\psi+\epsilon\mu\nablag\zeta\cdot\nablag\psi}{1+\epsilon^2\mu\vert\nablag\zeta\vert^2}$(vertical component of the velocity $U=\nabla_{X,z}\Phi$ at the surface).
\end{proposition}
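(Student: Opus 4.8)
The plan is to reduce the estimate to the flat‑strip elliptic problem \eqref{c2:dirichletneumann2} and then chain together three ingredients that are already available: the elliptic regularity estimate of Proposition \ref{c2:236}, the trace and embedding results of Proposition \ref{c2:212}, and the product estimate of Proposition \ref{c2:B5}, keeping careful track of the powers of $\mu$ throughout (this is the reason the range $0\le s\le t_0+3/2$ is imposed, as it matches the validity of the first form of Proposition \ref{c2:236}). The starting point is the identity expressing the Dirichlet--Neumann operator, after flattening by the diffeomorphism $\Sigma$ of \eqref{c2:diffeol}, through the trace at the surface of the conormal component of $\nablamug\phi$: if $\phi$ denotes the variational solution of \eqref{c2:dirichletneumann2} associated with $\psi$, then
$$\G[\epsilon\zeta,\beta b]\psi = \big(\mathbf{e}_{d+1}\cdot P(\Sigma)\nablamug\phi\big)_{\vert z=0}$$
(up to a sign fixed by orientation, irrelevant for the estimate). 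This identity is obtained by the chain rule through $\Sigma$ together with Green's identity and the variational characterisation of Definition \ref{c2:defvarsol}, and it reduces, when $\zeta=b=0$ so that $P(\Sigma)=I_d$ and $\Sigma$ is the identity, to $\partial_z\phi_{\vert z=0}$, which recovers the explicit symbol \eqref{c2:defGo}. Hence it suffices to bound the $H^{s-1/2}(\R^d)$ norm of that trace.

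For that, set $v:=P(\Sigma)\nablamug\phi$ and $v_{d+1}:=\mathbf{e}_{d+1}\cdot v$. The key observation is that the homogeneous Neumann condition $\partial_n\phi_{\vert z=-1}=0$, rewritten in the flattened coordinates, says exactly that $v_{d+1}$ vanishes at $z=-1$; consequently the one–dimensional (in $z$) trace inequality applies in its clean form $\vert v_{d+1}(\cdot,0)\vert_{H^{s-1/2}}^2\le C\norme{v_{d+1}}_{H^{s,0}}\norme{\partial_z v_{d+1}}_{H^{s-1,0}}$, with no lower–order remainder. Now the elliptic equation $\nablamug\cdot v=0$ gives $\partial_z v_{d+1}=-\sqrt{\mu}\sum_j\partial_{x_j}(\mathbf{e}_j\cdot v)$, so $\norme{\partial_z v_{d+1}}_{H^{s-1,0}}\le C\sqrt{\mu}\norme{v}_{H^{s,0}}$: the vertical derivative is traded for a horizontal one at the cost of a $\sqrt\mu$. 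Combined with the bound $\norme{v}_{H^{s,0}}\le C\sqrt{\mu}\,M(s+1/2)\vert\B\psi\vert_{H^s}$ to be obtained in the next step, the product inside the trace bound is of size $\mu^{3/2}M(s+1/2)^2\vert\B\psi\vert_{H^s}^2$, whose square root produces precisely the power $\mu^{3/4}$ of the first estimate.

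It remains to control $\norme{v}_{H^{s,0}}=\norme{P(\Sigma)\nablamug\phi}_{H^{s,0}}$. Writing $P(\Sigma)=I_d+Q(\Sigma)$ via \eqref{c2:decompP} and using the product estimate of Proposition \ref{c2:B5}, together with the regularity bound \eqref{c2:reguQ} on $Q$ and the embedding $H^{t_0+1/2,1}\hookrightarrow L^\infty H^{t_0}$ of Proposition \ref{c2:212}(2), one gets $\norme{v}_{H^{s,0}}\le C(1+\norme{Q}_{L^\infty H^{t_0}})\norme{\nablamug\phi}_{H^{s,0}}+\langle\norme{Q}_{H^{s,0}}\norme{\nablamug\phi}_{L^\infty H^{t_0}}\rangle$, the tame term being active only in the high–regularity range and contributing the dependence of $M(s+1/2)$ on $\vert(\epsilon\zeta,\beta b)\vert_{H^{s+1/2}}$; the intermediate range is recovered by interpolation. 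Applying Proposition \ref{c2:236} in the form $\norme{\nablamug\phi}_{H^{s,0}}\le\sqrt{\mu}\,M(s+1/2)\vert\B\psi\vert_{H^s}$ (and, for the low–regularity factor, $\norme{\nablamug\phi}_{L^\infty H^{t_0}}\le\sqrt{\mu}M_0\vert\B\psi\vert_{H^{t_0+1/2}}$) and inserting all of this into the trace bound yields the first estimate. The second estimate, with $\mu M(s+1)$ and $\vert\B\psi\vert_{H^{s+1/2}}$, follows from the same reduction but estimating the trace of $v_{d+1}$ directly by $\norme{v_{d+1}}_{H^{s,1}}$ (Proposition \ref{c2:212}(1)) and using the elliptic estimate with one more half–derivative of regularity on $\psi$; the extra regularity is what upgrades the gain from $\mu^{3/4}$ to $\mu$. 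Finally, for the assertion on $\underline{w}=\frac{\G\psi+\epsilon\mu\nablag\zeta\cdot\nablag\psi}{1+\epsilon^2\mu\vert\nablag\zeta\vert^2}$ one combines the bound just obtained for $\G\psi$ with Moser–type product estimates for the lower–order term $\epsilon\mu\nablag\zeta\cdot\nablag\psi$ (rewriting $\vert\nablag\psi\vert$ through $\B\psi$), and divides by $1+\epsilon^2\mu\vert\nablag\zeta\vert^2\ge 1$, whose inverse is controlled in the relevant Sobolev norms by composition estimates.

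The main obstacle is the $\mu$–bookkeeping: one must avoid the singular bound $\tanh(\sqrt{\mu}\vert D^\gamma\vert)/\sqrt{\mu}\le\mu^{-1/2}$ and instead systematically use the operator $\B$, exploiting both the $\sqrt\mu$ coming out of Proposition \ref{c2:236} and the additional $\sqrt\mu$ produced when the elliptic equation trades a vertical derivative for horizontal ones, so that the exponents $\mu^{3/4}$ and $\mu$ come out exactly. An efficient device to make this transparent (as in \cite{david}) is to rescale $X\mapsto\sqrt{\mu}X$, which turns $\nablamug$ into a plain gradient, carry out the estimates with $\mu$–independent constants, and rescale back. Carrying the tame (high–regularity) case along simultaneously adds bookkeeping but introduces no genuinely new idea.
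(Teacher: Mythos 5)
First, a point of comparison: the paper itself contains no proof of Proposition \ref{c2:314} — it is quoted from \cite{david} (Chapter 3). Your treatment of point (1) is essentially the argument given there and is correct in outline: the identity $\G\psi=\big(\mathbf{e}_{d+1}\cdot P(\Sigma)\nablamug\phi\big)_{\vert z=0}$, the vanishing of $v_{d+1}=\mathbf{e}_{d+1}\cdot P(\Sigma)\nablamug\phi$ at $z=-1$ (which is exactly the transformed Neumann condition), the clean trace inequality $\vert v_{d+1}(\cdot,0)\vert_{H^{s-1/2}}^2\lesssim\norme{v_{d+1}}_{H^{s,0}}\norme{\partial_z v_{d+1}}_{H^{s-1,0}}$, and the exchange $\partial_z v_{d+1}=-\sqrt{\mu}\,\nablag\cdot v_h$ coming from $\nablamug\cdot v=0$, combined with $\norme{v}_{H^{s,0}}\le\sqrt{\mu}\,M(s+1/2)\vert\B\psi\vert_{H^s}$ from $P=I+Q$, \eqref{c2:reguQ}, Propositions \ref{c2:B5}, \ref{c2:212} and \ref{c2:236}, do produce $\mu^{1/2}\cdot\mu=\mu^{3/2}$ inside the square root, hence $\mu^{3/4}$.

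The gap is in point (2). Estimating the trace by $\norme{v_{d+1}}_{H^{s,1}}$ and ``using the elliptic estimate with one more half-derivative on $\psi$'' cannot upgrade $\mu^{3/4}$ to $\mu$: extra Sobolev regularity of $\psi$ never improves a power of $\mu$, and the term $\norme{\Lambda^{s}v_{d+1}}_2\le\norme{v}_{H^{s,0}}$ is genuinely of size $\sqrt{\mu}\,M\vert\B\psi\vert_{H^{s}}$ and no better. Concretely, in the flat case $v_{d+1}=\partial_z\phi$ has vertical $L^2$ size $\sim(\sqrt{\mu}\vert\xi\vert)^{1/2}\vert\widehat\psi(\xi)\vert$ when $\sqrt{\mu}\vert\xi\vert\gtrsim1$, so for data concentrated at frequencies $\vert\xi\vert\sim\mu^{-1/2}$ the ratio of $\norme{\Lambda^s v_{d+1}}_2$ to $\mu\vert\B\psi\vert_{H^{s+1/2}}$ grows like $\mu^{-1/4}$ as $\mu\to0$; your route therefore yields at best a $\sqrt{\mu}$ (or $\mu^{3/4}$, if one reverts to the argument of point (1)), not $\mu$. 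The correct mechanism — the one in \cite{david} — is to use the divergence structure and the bottom condition \emph{before} taking the trace: $v_{d+1}(\cdot,0)=\int_{-1}^{0}\partial_z v_{d+1}\,dz=-\sqrt{\mu}\,\nablag\cdot\int_{-1}^{0}v_h\,dz$, whence $\vert\G\psi\vert_{H^{s-1/2}}\le\sqrt{\mu}\,\norme{v_h}_{H^{s+1/2,0}}\le\mu\,M(s+1)\vert\B\psi\vert_{H^{s+1/2}}$; the full horizontal derivative is what forces the norm $\vert\B\psi\vert_{H^{s+1/2}}$, and the second $\sqrt{\mu}$ comes from Proposition \ref{c2:236} applied at level $s+1/2$. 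With that replacement your outline goes through; the $\underline{w}$ addendum is acceptable as a sketch (note that comparing $\epsilon\mu\nablag\zeta\cdot\nablag\psi$ with $\mu^{3/4}\vert\B\psi\vert_{H^s}$ at low frequencies implicitly uses $\mu\le\mu_{\max}$, a dependence hidden in the constants).
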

\begin{remark} In all this paper, we consider the Water-Waves problem in finite depth. This is crucial for all these regularity results on $G$. For instance, in the linear case $\zeta=b =0$, the Dirichlet-Neumann operator is $\D\tanh(\D)$ in finite depth, while it is $\D$ in infinite depth. The low frequencies are therefore affected differently. 
\end{remark}
\begin{proposition}\label{c2:318}
Let $t_0>d/2$, and $0\leq s\leq t_0+1/2$. Let also $\zeta,b\in H^{t_0+1}(\mathbb{R}^d)$ be such that  $$\exists h_0>0, \forall X\in\mathbb{R}^d, 1+\epsilon\zeta(X)-\beta b(X) \geq h_0.$$ Then, for all $\psi_1$, $\psi_2\in \overset{.}H{}^{s+1/2}(\mathbb{R}^d)$, we have  $$(\Lambda^sG\psi_1,\Lambda^s\psi_2)_2 \leq\mu M_0 \vert \mathfrak{P}\psi_1\vert_{H^s}\vert \mathfrak{P}\psi_2\vert_{H^s},$$
where $M_0$ is a constant of the form $C(\frac{1}{h_0},\vert\zeta\vert_{H^{t_0+1}},\vert b\vert_{H^{t_0+1}})$.
\end{proposition}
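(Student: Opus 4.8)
The plan is to reduce the bilinear quantity to an integral over the flat strip $\mathcal{S}$ by means of the variational characterization of $\G$ (equivalently of $\frac1\mu\G$) and Green's identity, and then to estimate the two resulting factors separately via Cauchy--Schwarz; this is the scheme of \cite{david}, and only the dependence on $\mu$ requires care. Let $\phi_1$ be the variational solution of the transmuted problem \eqref{c2:dirichletneumann2} with Dirichlet datum $\psi_1$, in the sense of Definition \ref{c2:defvarsol}, so that $\G\psi_1=\big(P(\Sigma)\nablamug\phi_1\cdot e_z\big)_{|z=0}$ and $\int_{\mathcal{S}}\nablamug\phi_1\cdot P(\Sigma)\nablamug\theta=0$ for all $\theta\in H^1_{0,surf}(\mathcal{S})$. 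Since $\Lambda^s$ is a Fourier multiplier in $X$ alone it commutes both with $\nablamug$ and with the smoothing operator defining $\psi^\dagger$ in \eqref{c2:defdagger}, so $\Lambda^{2s}\psi_2=\big(\Lambda^{2s}\psi_2^\dagger\big)_{|z=0}$; using that the boundary contribution at $z=-1$ vanishes because $\partial_n\phi_1{}_{|z=-1}=0$, Green's identity gives
$$(\Lambda^s\G\psi_1,\Lambda^s\psi_2)_2=(\G\psi_1,\Lambda^{2s}\psi_2)_2=\int_{\mathcal{S}}\Lambda^{s}\big(P(\Sigma)\nablamug\phi_1\big)\cdot\Lambda^{s}\nablamug\psi_2^\dagger,$$
and hence, by Cauchy--Schwarz,
$$(\Lambda^s\G\psi_1,\Lambda^s\psi_2)_2\leq \norme{\Lambda^{s}\big(P(\Sigma)\nablamug\phi_1\big)}_2\,\norme{\Lambda^{s}\nablamug\psi_2^\dagger}_2 .$$

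Next I would bound the two factors. For the second one I would argue directly in Fourier variables: $\nablamug\psi_2^\dagger$ has components $\sqrt\mu\,\nablag\psi_2^\dagger$ and $\partial_z\psi_2^\dagger$, and since $\chi$ is compactly supported the $z$-integration over $(-1,0)$ of $\vert\chi(\sqrt\mu z\xi^\gamma)\vert^2$ and $\vert\chi'(\sqrt\mu z\xi^\gamma)\vert^2$ produces a factor $\min\!\big(1,(\sqrt\mu\vert\xi^\gamma\vert)^{-1}\big)\lesssim(1+\sqrt\mu\vert\xi^\gamma\vert)^{-1}$; by Plancherel this yields $\norme{\Lambda^{s}\nablamug\psi_2^\dagger}_2\leq C\sqrt\mu\,\vert\B\psi_2\vert_{H^s}$ with $\B$ as in \eqref{c2:defp} --- this is precisely where the finite-depth weight $(1+\sqrt\mu\vert D^\gamma\vert)^{-1/2}$ enters. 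For the first factor I would use the splitting $P(\Sigma)=I_d+Q$ of \eqref{c2:decompP}: for $0\leq s\leq t_0+1/2$ one has $s+1/2\leq t_0+1$, so by \eqref{c2:reguQ} (and Proposition \ref{c2:212} for the $L^\infty H$ norms) all the norms of $Q$ needed in the product estimate of Proposition \ref{c2:B5} are bounded by $M_0$, whence $\norme{\Lambda^{s}(P(\Sigma)\nablamug\phi_1)}_2\leq M_0\norme{\nablamug\phi_1}_{H^{s,0}}$ (directly for $s\leq t_0$, and by interpolation between $s=t_0$ and $s=t_0+1/2$ for $t_0<s\leq t_0+1/2$). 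Finally Proposition \ref{c2:236}, applicable since $s\leq t_0+3/2$ and with $M(s+1/2)\leq M_0$ in this range, gives $\norme{\nablamug\phi_1}_{H^{s,0}}\leq \sqrt\mu\,M_0\,\vert\B\psi_1\vert_{H^s}$. Multiplying the two bounds yields the factor $\mu$ and the constant $M_0=C(\tfrac1{h_0},\vert\zeta\vert_{H^{t_0+1}},\vert b\vert_{H^{t_0+1}})$, which is the claim.

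The two delicate points are: first, making the Green's-identity step rigorous --- one should carry it out for smooth, sufficiently decaying data and extend the estimate by density, and check that $\Lambda^{2s}\psi_2^\dagger$ is an admissible test extension, which in the higher-regularity range uses the trace/embedding properties of Proposition \ref{c2:212}; and second, keeping the power of $\mu$ exactly $\sqrt\mu$ in each factor (rather than $\mu^{3/4}$, as would appear if one measured $\psi_j$ in $\dot H^{s+1/2}$ via the other estimate of Proposition \ref{c2:314}, or $\mu^0$ in the infinite-depth normalization). This $\mu$-bookkeeping is the only place where the argument genuinely departs from the classical infinite-depth computation, and it is also what forces the range $0\leq s\leq t_0+1/2$: beyond it the constant would necessarily involve Sobolev norms of $(\zeta,b)$ higher than $H^{t_0+1}$. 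Apart from these points the proof is an application of the tools already stated, and closely follows \cite{david}.
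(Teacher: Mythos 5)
The paper itself contains no proof of Proposition \ref{c2:318}: it is recalled in Appendix B as a known estimate with a pointer to \cite{david}, Chapter 3, so there is nothing internal to compare against. Your argument is essentially the standard proof from that reference, and it is correct in its essentials: the duality/variational identity $(\Lambda^s\G\psi_1,\Lambda^s\psi_2)_2=\int_{\S}\Lambda^s\big(P(\Sigma)\nablamug\phi_1\big)\cdot\Lambda^s\nablamug\psi_2^\dagger$, the gain of exactly $\sqrt\mu$ on $\norme{\Lambda^s\nablamug\psi_2^\dagger}_2$ coming from the $z$-integration of $\chi(\sqrt\mu z\D)$ (which is precisely how the weight $(1+\sqrt\mu\D)^{-1/2}$ in $\B$ appears), and Proposition \ref{c2:236} together with $P(\Sigma)=I_d+Q$ for the other factor; the $\mu$-bookkeeping giving $\sqrt\mu\times\sqrt\mu=\mu$ is right. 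One step should be tightened: for $t_0<s\le t_0+1/2$, Proposition \ref{c2:B5} does \emph{not} yield $\norme{\Lambda^s(P(\Sigma)\nablamug\phi_1)}_2\le M_0\norme{\nablamug\phi_1}_{H^{s,0}}$ as you display it; it produces the additional term $\norme{Q}_{H^{s,0}}\norme{\nablamug\phi_1}_{L^\infty H^{t_0}}$, and $\norme{\nablamug\phi_1}_{L^\infty H^{t_0}}$ is only controlled (via Propositions \ref{c2:212} and \ref{c2:236}) by $\sqrt\mu M_0\vert\B\psi_1\vert_{H^{t_0+1/2}}$, not by $\sqrt\mu M_0\vert\B\psi_1\vert_{H^{s}}$. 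This is harmless at the endpoint $s=t_0+1/2$, where the two norms coincide, so the correct reading of your parenthetical is: establish the bound for $s\le t_0$ and at $s=t_0+1/2$ (absorbing the extra term there), then interpolate the resulting operator estimate in $s$ --- the same device the paper uses in the proof of Proposition \ref{c2:commut_x}. With that clarification, and the density argument you already flag for the Green's identity step, the proof is complete.
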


The second result gives a control of the shape derivatives of the Dirichlet-Neumann operator. More precisely, we define the  open set  $\mathbf{\Gamma}\subset H^{t_0+1}(\mathbb{R}^d)^2$ as:
$$\mathbf{\Gamma} =\lbrace \Gamma=(\zeta,b)\in H^{t_0+1}(\mathbb{R}^d)^2,\quad \exists h_0>0,\forall X\in\mathbb{R}^d, \epsilon\zeta(X) +1-\beta b(X) \geq h_0\rbrace$$ and, given a $\psi\in \overset{.}H{}^{s+1/2}(\mathbb{R}^d)$, the mapping: \begin{equation}\label{c2:mapping}G[\epsilon\cdot,\beta\cdot] : \left. \begin{array}{rcl}
&\mathbf{\Gamma} &\longrightarrow H^{s-1/2}(\mathbb{R}^d) \\
&\Gamma=(\zeta,b) &\longmapsto G[\epsilon\zeta,\beta b]\psi.
\end{array}\right.\end{equation} We can prove the differentiability of this mapping. The following Proposition gives estimates of the shape derivatives of $\G$.
%
%

\begin{proposition}\label{c2:328b}
Let $t_0>d/2$ and $(\zeta,b)\in H^{t_0+1}$ be such that \begin{equation*} \exists h_0>0,\forall X\in\mathbb{R}^d,  \epsilon\zeta(X)-\beta b(X) +1 \geq h_0.\end{equation*} Then, for all $0\leq s\leq t_0+1/2$, $$\vert d^j G(h,k)\psi\vert_{H^{s-1/2}} \leq M_0 \mu^{3/4} \prod_{m=1}^j \vert (\epsilon h_m,\beta k_m)\vert_{H^{t_0+1}} \vert \B\psi\vert_{H^s}.$$
\end{proposition}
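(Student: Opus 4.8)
The plan is to prove the estimate by induction on the order $j$ of the shape derivative, reducing each step to the elliptic regularity estimates already available (Proposition \ref{c2:236} and Lemma \ref{c2:238}) together with the product estimate of Proposition \ref{c2:B5}, the bound \eqref{c2:reguQ} on $Q(\Sigma)$, and the trace embedding of Proposition \ref{c2:212}. The whole computation is carried out on the fixed strip $\S$ through the diffeomorphism $\Sigma$ of \eqref{c2:diffeol}, so that differentiating with respect to the geometry acts only on the matrix $P(\Sigma)$ and on the surface evaluation.

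First I would treat $j=1$. Writing $G[\epsilon\zeta,\beta b]\psi$ as the trace at $z=0$ of a combination of $\nablamug\phi$ (with $\phi$ the solution of \eqref{c2:dirichletneumann2}), one differentiates the equation in the direction $(h,k)$: the shape derivative $\dot\phi = d\phi(h,k)\psi$ satisfies an elliptic problem $\nablamug\cdot(P(\Sigma)\nablamug\dot\phi) = -\nablamug\cdot\big(dP(\Sigma)(h,k)\,\nablamug\phi\big)$ with, after subtracting an explicit lifting accounting for the motion of the surface, homogeneous Dirichlet data at $z=0$ and a Neumann-type condition at $z=-1$. Applying Lemma \ref{c2:238} with source $g=dP(\Sigma)(h,k)\nablamug\phi$, and using that $\|dP(\Sigma)(h,k)\|_{H^{t_0+1/2,1}}\le M_0\,|(\epsilon h,\beta k)|_{H^{t_0+1}}$ — obtained by differentiating the explicit formula \eqref{c2:defP} and estimating exactly as in \eqref{c2:reguQ} — together with $\|\nablamug\phi\|_{H^{s,1}}\le\sqrt{\mu}\,M_0\,|\B\psi|_{H^s}$ from Proposition \ref{c2:236}, one controls $\|\Lambda^s\nablamug\dot\phi\|_2$ by $\sqrt{\mu}\,M_0\,|(\epsilon h,\beta k)|_{H^{t_0+1}}|\B\psi|_{H^s}$. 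The shape derivative of $G$ is then the trace at $z=0$ of $\nablamug\dot\phi$, plus lower-order terms from differentiating the coefficients evaluated at the surface (which are handled by Proposition \ref{c2:B5}); Proposition \ref{c2:212} converts the $H^{s,1}$ control into an $H^{s-1/2}(\R^d)$ bound, and tracking the powers of $\mu$ carried by $\nablamug$ and by the smoothing of $\psi$ produces the announced $\mu^{3/4}$, precisely as in the proof of the $j=0$ bound (Proposition \ref{c2:314}(1)).

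For $j\ge 2$ I would iterate: $d^j\phi(h_1,k_1,\dots,h_j,k_j)\psi$ solves the same elliptic equation whose right-hand side, by the Leibniz/Faà di Bruno rule applied to $P(\Sigma)\nablamug$, is a finite sum of terms $d^{i}P(\Sigma)(\cdots)\,\nablamug d^{j-i}\phi(\cdots)$; each $d^iP(\Sigma)$ is bounded by $M_0\prod_{m}|(\epsilon h_m,\beta k_m)|_{H^{t_0+1}}$, each $\nablamug d^{j-i}\phi$ by the induction hypothesis, so Lemma \ref{c2:238} closes the induction and the product structure of the estimate emerges automatically. The boundary contributions created when differentiating the surface evaluation are absorbed into the $H^{t_0+1}$ norms in the same way.

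The main obstacle I expect is bookkeeping rather than anything conceptual: one must carefully track the powers of $\mu$ — a $\sqrt{\mu}$ from each $\nablamug$, a $\mu^{1/4}$ from the trace of the smoothed boundary data, and the uniform-in-$\mu$ coercivity constant $k(\Sigma)$ of \eqref{c2:pcoer} — so as to land exactly on $\mu^{3/4}$ and not on a negative power of $\mu$; and one must handle the combinatorics of the higher shape derivatives of $P(\Sigma)$ together with the fact that $d^j\phi$ does not have vanishing trace at $z=0$ for $j\ge1$ and must be corrected by an explicit lifting before Lemma \ref{c2:238} can be applied. Both are routine in the style of \cite{david}, but have to be executed with the care demanded by the uniformity in $\mu$.
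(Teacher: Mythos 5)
The paper itself does not prove this proposition: it is quoted in Appendix \ref{c2:appendixA} with an explicit pointer to \cite{david}, Chapter 3, so the only comparison available is with the standard proof there. Your overall strategy (differentiate the straightened problem \eqref{c2:dirichletneumann2} with respect to $(\zeta,b)$, so that only $P(\Sigma)$ is hit, estimate $d^j\phi$ by the uniform elliptic estimates, and induct on $j$) is indeed that strategy, and your reduction of the source terms to $d^iP(\Sigma)(\cdots)\nablamug d^{j-i}\phi(\cdots)$ with $\Vert d^iP(\Sigma)\Vert$ controlled by $M_0\prod_m\vert(\epsilon h_m,\beta k_m)\vert_{H^{t_0+1}}$ is correct. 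Two of your side remarks, however, are off. First, the lifting issue you flag is not there: in the straightened formulation the Dirichlet datum $\phi_{\vert z=0}=\psi$ is independent of $(\zeta,b)$, so $d^j\phi$ has \emph{vanishing} trace at $z=0$ for every $j\geq 1$ (this is precisely what makes Lemma \ref{c2:238} applicable, with the geometric dependence appearing only through $g$ and the Neumann term at $z=-1$); no correction is needed.

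Second, and this is the genuine gap, the final step as you describe it cannot produce the stated constant. If you estimate $\Vert\Lambda^s\nablamug d\phi\Vert_2$ (or even its $H^{s,1}$ analogue) and then take the trace of $e_z\cdot P(\Sigma)\nablamug d\phi$ via Proposition \ref{c2:212}, the best you get is a factor $\sqrt{\mu}$, since Proposition \ref{c2:236} gives $\Vert\nablamug\phi\Vert_{H^{s,1}}\leq\sqrt{\mu}M\vert\B\psi\vert_{H^s}$ and Lemma \ref{c2:238} carries a $1/\sqrt{\mu}$ on its left-hand side that must first be compensated by the $\sqrt{\mu}$ factors hidden in the entries of $dP(\Sigma)$ (your intermediate bound $\sqrt{\mu}M_0\vert(\epsilon h,\beta k)\vert_{H^{t_0+1}}\vert\B\psi\vert_{H^s}$ does not follow from Lemma \ref{c2:238} as stated without this structure). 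For $\mu\leq 1$ a $\sqrt{\mu}$ bound is strictly weaker than the claimed $\mu^{3/4}$, so the trace route does not prove the proposition in the regime of interest. The extra $\mu^{1/4}$ comes from a duality argument, not from a trace theorem: one pairs $(\Lambda^{s-1/2}d^j\G(h,k)\psi,u)_2$, moves everything onto the strip by Green's identity, and exploits the smoothed extension $u^\dagger$ of the test function, for which $\Vert\Lambda^{-1/2}\nablamug u^\dagger\Vert_2\leq C\mu^{1/4}\vert u\vert_2$ as in \eqref{c2:control_commut3}; this is exactly the mechanism used in this paper's own proof of Proposition \ref{c2:commut_x} and in \cite{david} for the $j=0$ estimate of Proposition \ref{c2:314}. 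You do write ``a $\mu^{1/4}$ from the trace of the smoothed boundary data'', which is the right number, but it is attached to the wrong object (it is the dual test function, not $\psi$, that is smoothed), and without making the duality step explicit the announced $\mu^{3/4}\vert\B\psi\vert_{H^s}$ right-hand side is not reached.
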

The following commutator estimate is useful (see \cite{david} Proposition 3.30):

\begin{proposition}\label{c2:329}
Let $t_0>d/2$ and $\zeta, b \in H^{t_0+2}(\mathbb{R}^d)$ such that:  \begin{equation*} \exists h_0>0,\forall X\in\mathbb{R}^d,  \epsilon\zeta(X)-\beta b(X) +1 \geq h_0.\end{equation*} 
For all $\underline{V}\in H^{t_0+1}(\mathbb{R}^d)^2$ and $u\in H^{1/2}(\mathbb{R}^d)$, one has 

\begin{equation*}
((\underline{V}\cdot\nabla^{\gamma} u),\frac{1}{\mu}Gu)\leq M\vert\underline{V}\vert_{W^{1,\infty}}\vert \mathfrak{P} u\vert_2^2,
\end{equation*}
where $M$ is a constant of the form $C(\frac{1}{h_0},\vert\zeta\vert_{H^{t_0+2}},\vert b\vert_{H^{t_0+2}})$.
\end{proposition}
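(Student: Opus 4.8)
The plan is to transport the pairing to the fixed strip $\S$ by means of the Green identity for the Dirichlet--Neumann operator, and then to exploit the fact that the components of $\nablag$ are constant‑coefficient derivatives, commuting with $\nablamug$, in order to symmetrise the only genuinely second‑order term. Write $\nablag=(\partial^\gamma_1,\dots,\partial^\gamma_d)$ and let $\phi=u^{\mathfrak h}$ be the harmonic extension of $u$, i.e.\ the solution of \eqref{c2:dirichletneumann2} with $\phi_{\vert z=0}=u$; recall (see \cite{david} and Definition \ref{c2:defvarsol}) that for any $\theta\in H^1(\S)$ with $\theta_{\vert z=0}=w$ one has $(w,\G u)_2=\int_\S P(\Sigma)\nablamug\phi\cdot\nablamug\theta$, hence $(w,\tfrac1\mu\G u)_2=\tfrac1\mu\int_\S P(\Sigma)\nablamug\phi\cdot\nablamug\theta$. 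Since $\nablag$ involves only horizontal derivatives, $(\nablag\phi)_{\vert z=0}=\nablag u$, so that $\theta:=\underline V\cdot\nablag\phi$ — with $\underline V$ regarded as independent of $z$ — is an admissible extension of $w=\underline V\cdot\nablag u$. All computations below are carried out for $u$ in a smooth dense class; the general case $u\in H^{1/2}$ follows by density, the final bound depending on $u$ only through $\vert\B u\vert_2$.

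Expanding, and using that each $\partial^\gamma_j$ commutes with $\nablamug$ together with $\partial_z\underline V=0$,
$$\nablamug(\underline V\cdot\nablag\phi)=\sum_{j} (\nablamug \underline V_j)\,(\nablag\phi)_j + \sum_{j}\underline V_j\,\partial^\gamma_j(\nablamug\phi).$$
In the first sum $\vert\nablamug \underline V_j\vert=\sqrt\mu\,\vert\nablag \underline V_j\vert$ whereas $\vert(\nablag\phi)_j\vert\le\mu^{-1/2}\vert\nablamug\phi\vert$, so, using $\norme{P(\Sigma)}_{L^\infty}\le M_0$ from \eqref{c2:decompP}, its contribution to the integral is $\le\tfrac1\mu M_0\vert\nablag \underline V\vert_\infty\norme{\nablamug\phi}_2^2$. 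For the second sum set $\mathbf{u}=\nablamug\phi$ and use the symmetry of $P(\Sigma)$:
$$\partial^\gamma_j\big(\mathbf{u}\cdot P(\Sigma)\mathbf{u}\big)=2\,(\partial^\gamma_j\mathbf{u})\cdot P(\Sigma)\mathbf{u} + \mathbf{u}\cdot(\partial^\gamma_j P(\Sigma))\mathbf{u},$$
whence, integrating by parts in the horizontal variable (no boundary contribution, this direction being tangential to $z=0$ and $z=-1$),
$$\tfrac1\mu\int_\S\sum_j\underline V_j(\partial^\gamma_j\mathbf{u})\cdot P(\Sigma)\mathbf{u}=-\tfrac1{2\mu}\int_\S\sum_j(\partial^\gamma_j \underline V_j)\,\mathbf{u}\cdot P(\Sigma)\mathbf{u}-\tfrac1{2\mu}\int_\S\sum_j\underline V_j\,\mathbf{u}\cdot(\partial^\gamma_j P(\Sigma))\mathbf{u}.$$
The point is that no derivative of $\mathbf{u}=\nablamug\phi$ survives.

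It remains to assemble the bounds. By Proposition \ref{c2:236} with $s=0$ one has $\norme{\nablamug\phi}_2\le\sqrt\mu\,M_0\,\vert\B u\vert_2$, so the two factors $\sqrt\mu$ exactly compensate the prefactor $\tfrac1\mu$. The term involving $\partial^\gamma_j P(\Sigma)=\partial^\gamma_j Q(\Sigma)$ is where the hypothesis $\zeta,b\in H^{t_0+2}$ is used: by \eqref{c2:reguQ} and Proposition \ref{c2:212}, $\norme{\nablag Q(\Sigma)}_{L^\infty(\S)}\le C\norme{Q(\Sigma)}_{H^{t_0+3/2,1}}\le M_0\vert(\epsilon\zeta,\beta b)\vert_{H^{t_0+2}}$. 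Collecting the three contributions, $(\underline V\cdot\nablag u,\tfrac1\mu\G u)_2\le C\big(M_0^3\vert\nablag \underline V\vert_\infty+M_0^3\vert \underline V\vert_\infty\big)\vert\B u\vert_2^2\le M\vert \underline V\vert_{W^{1,\infty}}\vert\B u\vert_2^2$, with $M=C(\tfrac1{h_0},\vert\zeta\vert_{H^{t_0+2}},\vert b\vert_{H^{t_0+2}})$ of the claimed form; replacing $\underline V$ by $-\underline V$ turns this into the two‑sided estimate.

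I expect the main obstacle to be the functional‑analytic bookkeeping rather than the algebra: the extension $\theta=\underline V\cdot\nablag\phi$ is only known to lie in $L^2(\S)$ for $u\in H^{1/2}$ — computing $\nablamug\theta$ costs a derivative on $\phi$ that is not a priori available — so the Green identity must first be applied for $u$ smooth (or $u\in\dot H^{3/2}$, say), where every integral above converges, and the inequality then propagated by continuity; this is legitimate precisely because, after the symmetrisation, the right‑hand side sees $u$ only through $\vert\B u\vert_2$, which is controlled by $\vert u\vert_{H^{1/2}}$. A secondary, purely bookkeeping, point is to keep track of the powers of $\mu$ so that the constant is genuinely $\mu$‑independent, which rests on the balance between $\norme{\nablamug\phi}_2\lesssim\sqrt\mu\,\vert\B u\vert_2$, $\vert\nablamug \underline V_j\vert=\sqrt\mu\,\vert\nablag \underline V_j\vert$ and $\vert(\nablag\phi)_j\vert\le\mu^{-1/2}\vert\nablamug\phi\vert$.
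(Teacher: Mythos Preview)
The paper does not supply its own proof of this proposition; it is quoted from \cite{david}, Proposition~3.30, and stated in the appendix without argument. Your proof---lifting the pairing to the strip via the Green identity, taking $\theta=\underline V\cdot\nablag\phi$ as the extension of $\underline V\cdot\nablag u$, then symmetrising the principal term $\sum_j\underline V_j(\partial^\gamma_j\mathbf u)\cdot P\mathbf u$ by horizontal integration by parts so that no derivative remains on $\mathbf u=\nablamug\phi$---is precisely the standard argument from that reference, and it is correct, including the density justification and the $\mu$-bookkeeping you flag at the end.
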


\end{appendix}

The author has been partially funded by the ANR project Dyficolti ANR-13-BS01-0003-01.
\bibliographystyle{plain}

\bibliography{estimation_dispersion}

\end{document}